\newtheorem{thm}{Theorem}
\newtheorem*{thm1}{Theorem \ref{thm:extremal-infimum}}
\newtheorem*{thm4}{Theorem \ref{thm:minimizing-mod-wk}}
\newtheorem{conj}{Conjecture}
\newtheorem{prop}{Proposition}
\newtheorem{lemma}[prop]{Lemma}
\newtheorem{cor}[prop]{Corollary}
\newtheorem*{thm*}{Theorem}
\newtheorem*{alg*}{Algorithm}
\newtheorem*{lemma*}{Lemma}
\theoremstyle{remark}
\newtheorem{rmk}[prop]{Remark}
\newtheorem*{rmk*}{Remark}
\newtheorem{notation}[prop]{Notation}
\newtheorem*{notation*}{Notation}
\theoremstyle{definition}
\newtheorem{defn}[prop]{Definition}
\numberwithin{equation}{section}
\numberwithin{prop}{section}
\newcommand{\mybf}{\mathbb}
\newcommand{\bR}{\mybf{R}}
\newcommand{\bN}{\mybf{N}}
\newcommand{\bQ}{\mybf{Q}}
\newcommand{\cA}{\mathcal{A}}
\newcommand{\cP}{\mathcal{P}}
\newcommand{\cL}{\mathcal{L}}
\newcommand{\cG}{\mathcal{G}}
\newcommand{\cK}{\mathcal{K}}
\newcommand{\al}{\alpha}
\newcommand{\bmath}[1]{\bm{#1}}
\newcommand{\ovrln}{\bmath}
\newcommand{\Gal}{\operatorname{Gal}}
\newcommand{\supp}{\operatorname{supp}}
\newcommand{\ra}{\rightarrow}
\newcommand{\lcm}{\operatorname{lcm}}
\newcommand{\ep}{\epsilon}
\newcommand{\lfrac}[2]{\left(\frac{#1}{#2}\right)}
\newcommand{\Tor}{\operatorname{Tor}}
\newcommand{\Stab}{\operatorname{Stab}}
\newcommand{\ord}{\operatorname{ord}}
\newcommand{\Qbar}{\overline{\mybf{Q}}}
\def\talltareesidedbox#1{\setbox0=\hbox{$#1$}\dimen0=\wd0 \advance\dimen0 by3pt\rlap{\hbox{\vrule height10pt width.4pt
 depth2pt \kern-.4pt\vrule height10.4pt width\dimen0 depth-10pt\kern-.4pt \vrule height10pt width.4pt depth2pt}}
 \relax \hbox to\dimen0{\hss$#1$\hss}}
\def\tareesidedbox#1{\setbox0=\hbox{$#1$}\dimen0=\wd0 \advance\dimen0 by3pt\rlap{\hbox{\vrule height8pt width.4pt
 depth2pt \kern-.4pt\vrule height8.4pt width\dimen0 depth-8pt\kern-.4pt \vrule height8pt width.4pt depth2pt}}
\relax \hbox to\dimen0{\hss$#1$\hss}}
\def\shorttareesidedbox#1{\setbox0=\hbox{$#1$}\dimen0=\wd0 \advance\dimen0 by3pt\rlap{\hbox{\vrule height7pt width.4pt
 depth2pt \kern-.4pt\vrule height7.4pt width\dimen0 depth-7pt\kern-.4pt \vrule height7pt width.4pt depth2pt}}
 \relax \hbox to\dimen0{\hss$#1$\hss}}
\newcommand{\house}[1]{\tareesidedbox{#1}}
\newcommand{\mhat}{\widehat{m}}
\newcommand{\bal}{\bmath{\alpha}}
\newcommand{\bbeta}{\bmath{\beta}}
\newcommand{\btau}{\bmath{\tau}}
\newcommand{\bgamma}{\bmath{\gamma}}
\newcommand{\mbr}{m}
\newcommand{\Norm}{\operatorname{Norm}}
\title[Extremal norms]{Norms extremal with respect to the Mahler measure}\author[Fili]{Paul Fili}
\address{Department of Mathematics\\ University of Texas at Austin, TX 78712}
\email{pfili@math.utexas.edu}
\author[Miner]{Zachary Miner}
\address{Department of Mathematics\\ University of Texas at Austin, TX 78712}
\email{zminer@math.utexas.edu}
\subjclass[2000]{11R04}
\keywords{Weil height, Mahler measure, Lehmer's problem}
\date{\today}
\begin{document}

\begin{abstract}
In a previous paper, the authors introduced several vector space norms on the space of algebraic numbers modulo torsion which corresponded to the Mahler measure on a certain class of numbers and allowed the authors to formulate $L^p$ Lehmer conjectures which were equivalent to their classical counterparts. In this paper, we introduce and study several analogous norms which are constructed in order to satisfy an extremal property with respect to the Mahler measure. These norms are a natural generalization of the metric Mahler measure introduced by Dubickas and Smyth. We evaluate these norms on certain classes of algebraic numbers and prove that the infimum in the construction is achieved in a certain finite dimensional space.
\end{abstract}

\maketitle

\tableofcontents

\section{Introduction}
\subsection{Background}
Let $K$ be a number field with set of places $M_K$. For each $v\in M_K$ lying over a rational prime $p$, let $\|\cdot\|_v$ be the absolute value on $K$ extending the usual $p$-adic absolute value on $\bQ$ if $v$ is finite or the usual archimedean absolute value if $v$ is infinite.
Then for $\al\in K^\times$, the absolute logarithmic Weil height $h$ is given by 
\[
 h(\al) = \sum_{v\in M_K} \frac{[K_v:\bQ_v]}{[K:\bQ]} \log^+ \|\al\|_v 
\]
where $\log^+ t = \max \{\log t,0\}$. As the right hand side above does not depend on the choice of field $K$ containing $\al$, $h$ is a well-defined function mapping  $\Qbar^\times\ra[0,\infty)$ which vanishes precisely on the roots of unity $\Tor(\Qbar^\times)$.  Related to the Weil height is the logarithmic \emph{Mahler measure}, given by 
\[
 m(\al)=(\deg \al)\cdot h(\al),
\]
where $\deg\al=[\bQ(\al):\bQ].$  Perhaps the most important open question regarding the Mahler measure is \emph{Lehmer's conjecture} that there exists an absolute constant $c$ such that
\begin{equation}
 m(\al)\geq c>0\quad\text{for all}\quad \al\in\Qbar^\times\setminus \Tor(\Qbar^\times).
\end{equation}
The question of the existence of algebraic numbers with small Mahler measure was first posed in 1933 by D.H. Lehmer \cite{Lehmer}. The current best known lower bound, due to Dobrowolski \cite{Dob}, is of the form
\[
m(\al)\gg \lfrac{\log \log \deg \al}{\log \deg \al}^3\quad\text{for all}\quad \al\in\Qbar^\times\setminus \Tor(\Qbar^\times)
\]
where the implied constant is absolute.

The Weil height $h$ naturally satisfies the conditions of being a metric on the space
\[
 \cG = \Qbar^\times / \Tor(\Qbar^\times)
\]
of algebraic numbers modulo torsion, and in fact, viewing $\cG$ as a vector space over $\bQ$ written multiplicatively (see \cite{AV}), it is easy to see that $h$ is a vector space norm. The study of the Mahler measure on the vector space of algebraic numbers modulo torsion presents several difficulties absent for the Weil height, first of which is that while $m$ also vanishes precisely on $\Tor(\Qbar^\times)$, unlike $h$, it is not well-defined on the quotient space modulo torsion. To get around that difficulty, Dubickas and Smyth \cite{DS} first introduced the \emph{metric Mahler measure}, which gave a well-defined metric on $\cG$ satisfying the additional property of being the largest metric which descends from a function bounded above by the Mahler measure on $\Qbar^\times$. Later, the first author and Samuels \cite{FS,S} defined the \emph{ultrametric Mahler measure} which satisfies the strong triangle inequality and gives a projective height on $\cG$. It is easy to see that the metric and ultrametric Mahler measures each induce the discrete topology on $\cG$ if and only if Lehmer's conjecture is true.

In this paper we will introduce vector space norms on $\cG$ which satisfy an analogous extremal property with respect to the Mahler measure as the metric Mahler measure does. Before presenting our constructions, let us fix our notation. We denote the $L^p$ Weil heights for $1\leq p<\infty$ by
\[
 h_p(\al) = \bigg(\sum_{v\in M_K} \frac{[K_v:\bQ_v]}{[K:\bQ]}\cdot \left|\log \|\al\|_v\right|^p\bigg)^{1/p}\quad\text{for}\quad\al\in K^\times,
\]
noting that the classical Weil height satisfies $2h=h_1$ (see \cite{AV}) and is well-defined, independent of the choice of $K$. For $p=\infty$, we let
\[
 h_\infty(\al) = \sup_{v\in M_K} \left|\log \|\al\|_v\right| \quad\text{for}\quad\al\in K^\times.
\]
Analogously, we define the $L^p$ Mahler measure on $\Qbar$ by $m_p(\al)=(\deg \al)\cdot h_p(\al)$, and note that $m_1=2m$ is twice the usual Mahler measure.

For an algebraic number $\al\in\Qbar^\times$, we let $\bal\in\cG$ denote its equivalence class modulo torsion. Let  $d:\cG\ra\bN$ be given by
\[
 d(\bal) = \min_{\zeta\in\Tor(\Qbar^\times)} \deg \zeta\al,
\]
where $\zeta\al$ ranges over all representatives of the equivalence class $\bal$. The \emph{minimal logarithmic Mahler measure} is defined to be the function $m:\cG\ra[0,\infty)$ given by 
\[
 m(\bal)=d(\bal)h(\bal).
\]
(Recall that $h_p$ is constant on cosets modulo torsion, in particular, $h_p(\bal)=h_p(\al)$ for all $\al\in\Qbar^\times$.) More generally, we define the \emph{minimal logarithmic $L^p$ Mahler measure}
\[
 \mbr_p(\bal)=d(\bal)h_p(\bal).
\]
This function is called minimal because it yields, for any element $\bal\in\cG$, the minimal logarithmic $L^p$ Mahler measure amongst all of the representatives in $\Qbar^\times$ of our $\bal\in\cG$ , that is,
\[
\mbr_p(\bal) = \min_{\zeta\in\Tor(\Qbar^\times)} m_p(\al\zeta).
\]

Let us now recall the construction of the metric Mahler measure $\mhat: \cG\ra [0,\infty)$ of Dubickas and Smyth \cite{DS}, which is a metric on $\cG$ extremal with respect to the Mahler measure. The (logarithmic) metric Mahler measure is defined by
\[
\widehat{m}(\bal) = \inf_{\al=\al_1\cdots\al_n} \sum_{i=1}^n m(\al_i),
\]
where the infimum is taken over all possible ways of writing any representative of $\bal$ as a product of other algebraic numbers. This construction is extremal in the sense that any other function $g : \cG\ra [0,\infty)$ satisfying
\begin{enumerate}
 \item $g(\bal)\leq m(\bal)$ for all $\bal\in\cG$, and 
 \item $g(\bal\bbeta^{-1})\leq g(\bal)+g(\bbeta)$ for all $\bal,\bbeta\in\cG$,
\end{enumerate}
is then smaller than $\mhat$, that is, $g(\bal)\leq \mhat(\bal)$ for all $\bal\in\cG$. Equivalently, lifting to $\Qbar^\times$ in the natural way, it is easy to see that $\mhat$ satisfies the same extremal property with respect to the logarithmic Mahler measure. This extremal property is characteristic of the metric construction for height functions \cite{DS, DS2, FS}.

\subsection{Main results}
The space $\cG$ has a vector space structure over $\bQ$ (written multiplicatively), so we might ask if there exists a vector space norm satisfying the same extremal property with respect to the Mahler measure. We define the \emph{extremal norm} $\widetilde{m}_p$ associated to $m_p$ to be:
\[
 \widetilde{m}_p(\bal)=\inf_{\bal=\bal_1^{r_1}\cdots\bal_n^{r_n}} \sum_{i=1}^n |r_i|m_p(\bal_i),
\]
where the infimum is taken over all ways of writing $\bal$ as a linear combination of vectors $\bal_i\in\cG$ with $r_i\in\bQ$. We prove that $\widetilde m_p$ is a well-defined vector space norm on $\cG$ which is extremal amongst all seminorms with respect to the Mahler measure, in the sense that if $g : \cG\ra [0,\infty)$ is a function satisfying
\begin{enumerate}
 \item $g(\bal)\leq m_p(\bal)$ for all $\bal\in\cG$,
 \item $g(\bal\bbeta^{-1})\leq g(\bal)+g(\bbeta)$, and
 \item $g(\bal^r)= |r| g(\bal)$ for all $\bal\in\cG,r\in\bQ$,
\end{enumerate}
then $g\leq \widetilde m_p$, that is, $g(\bal)\leq \widetilde m_p(\bal)$ for all $\bal\in\cG$.

Our main result is a finiteness theorem for the extremal norm $\widetilde m_1$ analogous to the main result of \cite{S} for the infimum of the metric Mahler measure. Let $K$ be a number field and let
\[
 V_K = \{\bal^r:r\in\bQ\text{ and }\bal\in K^\times/\Tor(K^\times) \}
\]
be the vector subspace inside $\cG$ spanned by elements of $K^\times/\Tor(K^\times)$. Notice that $\bal\in V_K$ if and only if for any coset representative $\al\in\Qbar^\times$ we have $\al^n\in K^\times$ for some $n\in\bN$. Let $M_K$ be the set of places of $K$, and let $S\subset M_K$ be a finite set of places of $K$, including all archimedean places. Then for any field extension $L/K$, define
\[
 V_{L,S} = \{ \bal \in V_L : \|\bal\|_w=1\text{ for }w|v\in M_K\setminus S\}.
\]
Observe that by Dirichlet's Theorem, $V_{L,S}$ is a finite dimensional vector space.
We then prove the following result: 
\begin{thm}\label{thm:extremal-infimum}
 Let $\bal\in V_K$, where $K$ is Galois. Then there exists a finite set of rational primes $S$, containing the archimedean place, such that
 \[
  \widetilde m_1(\bal) = \sum_{F\subseteq K} [F:\bQ]\cdot h_1(\bal_F)
 \]
 where $\bal_F\in \overline{V_{F,S}}$, $\bal = \prod_{F\subseteq K} \bal_F$, and for each pair of fields $E\subset F\subseteq K$,
 \[
  h_1(\bal_F) = \inf_{\bbeta\in V_{E,S}} h_1(\bal_F / \bbeta).
 \]
\end{thm}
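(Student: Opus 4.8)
The plan is to reduce the infimum defining $\widetilde m_1(\bal)$ to a finite-dimensional optimization, to show that this optimization is attained, and then to identify its value by producing a matching linear functional. Since $\bal\in V_K$, only finitely many places $w$ of $K$ satisfy $\|\bal\|_w\neq 1$; let $S$ be the finite set of rational primes below those places, together with the archimedean place, so that $\bal\in V_{K,S}$. The heart of the reduction is the claim that the infimum $\widetilde m_1(\bal)=\inf\sum_i|r_i|m_1(\bal_i)$ is not changed if we restrict to factorizations in which every $\bal_i$ lies in $\overline{V_{F_i,S}}$ for some subfield $F_i\subseteq K$. To prove this I would take an arbitrary factorization, enlarge the ground field to a single finite Galois extension $L/\bQ$ containing $K$ and all the $\bal_i$, and then apply two families of projections on $\cG\otimes\bR$: the Galois averaging operators $\bbeta\mapsto\bigl(\prod_{\sigma\in\Gal(L/F)}\sigma\bbeta\bigr)^{1/[L:F]}$ attached to the subfields $F$ of $K$, and the decomposition of $\cG\otimes\bR$ into its prime-by-prime (and finer, Galois-theoretic) components from our earlier work, which allows us to discard the parts of each $\bal_i$ supported away from $S$. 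That $\sum_i|r_i|h_1(\bal_i)$ does not increase under these operations is relatively soft, since $h_1$ is a Galois-invariant $\ell^1$-type norm; the delicate point, and what I expect to be the main obstacle, is showing that the degree factors $d(\bal_i)$ are not increased either, and it is precisely here that the hypothesis that $K$ is Galois, and the combinatorics of the lattice of its subfields, are needed.

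Granting the reduction, one may group the pieces of any competing factorization according to their fields and merge them to rewrite $\bal=\prod_{F\subseteq K}\bal_F$ with $\bal_F\in\overline{V_{F,S}}$; after twisting each $\bal_F$ by a root of unity to minimize its degree, one has $d(\bal_F)=[F:\bQ]$, using that an element of $V_{F,S}$ whose degree cannot be lowered by a torsion twist has degree exactly $[F:\bQ]$, so that the cost of the factorization is $\sum_{F\subseteq K}[F:\bQ]h_1(\bal_F)$. The admissible tuples $(\bal_F)_{F\subseteq K}$ with $\prod_F\bal_F=\bal$ form an affine subspace of the finite-dimensional real vector space $\prod_{F\subseteq K}\overline{V_{F,S}}$, and the functional $(\bal_F)\mapsto\sum_F[F:\bQ]h_1(\bal_F)$ is continuous and coercive there because $h_1$ restricts to a genuine norm on each $\overline{V_{F,S}}$; hence a minimizing tuple exists, and it gives the upper bound $\widetilde m_1(\bal)\le\sum_{F\subseteq K}[F:\bQ]h_1(\bal_F)$.

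For the reverse inequality I would use that $\widetilde m_1$ is the largest seminorm dominated by $m_1$, which by a Hahn--Banach argument on $\cG\otimes\bR$ gives $\widetilde m_1(\bal)=\sup\{\phi(\bal):\phi\text{ linear on }\cG\otimes\bR,\ |\phi|\le m_1\text{ on }\cG\}$; it then suffices to exhibit one such $\phi$ with $\phi(\bal)=\sum_F[F:\bQ]h_1(\bal_F)$. Working in the decomposition of $\cG\otimes\bR$, I would take $\phi$ to be block-diagonal, equal on the ``new at $F$'' block to $[F:\bQ]$ times the sign of the corresponding coordinate of $\bal$. Checking $|\phi|\le m_1$ splits into an easy case — for elements $\bbeta\in V_{F,S}$ of full degree $[F:\bQ]$ it is the duality of $\ell^1$ and $\ell^\infty$ — and a genuinely technical case, for elements $\bbeta$ of smaller degree, where one needs a coherence property relating the sizes of the blocks of $\phi$ attached to nested fields; this is a second obstacle, again resolved using the structure of the decomposition from our earlier work. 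Comparing $\phi(\bal)$ with the upper bound forces the minimizing tuple to consist of elements $\bal_F$ that are purely ``new at $F$'', i.e.\ reduced, and then the identity $h_1(\bal_F)=\inf_{\bbeta\in V_{E,S}}h_1(\bal_F/\bbeta)$ for $E\subset F\subseteq K$ is a restatement of that reducedness together with the observation that, $h_1$ being continuous, the infimum over $V_{E,S}$ coincides with the infimum over its real closure. The two obstacles — controlling $d$ under the projections, and the coherence of the dual functional on elements of non-maximal degree — are where I expect the real work to lie; the compactness and duality arguments are then comparatively formal.
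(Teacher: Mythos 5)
Your overall plan for the first half — project each factor onto a finite-dimensional $S$-unit space with a projection that is norm-nonincreasing for both $h_1$ and $\delta h_1$, then group by minimal subfields — is the paper's strategy. But you correctly flag the crux (that $\delta$ does not increase under the $S$-unit projection) and then leave it open, gesturing at ``the decomposition from our earlier work.'' That gesture does not suffice: an arbitrary projection killing the components of $\bal_i$ supported outside $S$ has no reason to respect $\delta$, and in fact most do not. The paper's resolution is a careful hand-built projection $P_S$ (Proposition~\ref{prop:s-unit-projection}). One first chooses $S$ Galois-stable; then for each place $v\notin S$ a class-number argument (Lemma~\ref{lemma:alpha-v-exists}) produces $\bal_v\in V_K$ supported only at $v$ and the archimedean places, with all archimedean valuations $\geq 1$, and realizing the quotient norm mod $V_{K,S}$. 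Crucially, these $\bal_v$ are then replaced by their averages over the decomposition group $\Stab_G(v)$ and propagated around each rational prime by Galois conjugation (Lemma~\ref{lemma:alpha-v-system}). This $G$-equivariance is precisely what lets one argue that if some $\sigma$ fixes $\bal$ but moves $P_S\bal$, then the correction term $\prod_v\bal_v^{n_v}$ must separate two places in $S$ and, by equivariance, must also separate places outside $S$, so $\sigma$ cannot have fixed $\bal$ after all. Without constructing the $\bal_v$ this way — not just any generators of the ideal classes, but Galois-coherent, decomposition-group-averaged ones — the $\delta$-inequality fails, so this is not a detail you can defer to the finite-dimensional structure of $\cG$; it is the technical heart of the theorem.

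Your second half diverges from the paper, and introduces an unnecessary obstacle. The paper does not invoke Hahn--Banach or a dual linear functional at all. Once every factor lies in $V_{K,S}$, the grouping by minimal field plus finite-dimensionality/coercivity already yields the identity $\widetilde m_1(\bal)=\sum_F [F:\bQ]h_1(\bal_F)$ for some factorization. The quotient-norm property is then extracted not from tightness in a duality pairing but directly from Theorem~\ref{thm:minimizing-mod-wk} (built on the explicit $L^1$ optimization of Theorem~\ref{thm:orig-maza-fried}): for each pair $E\subset F$ one can choose $\ovrln{\eta}\in\overline{V_{E,S}}$ that \emph{simultaneously} minimizes $h_1(\bal_F\ovrln{\eta}^{-1})$ and the weighted total $h_1(\ovrln{\eta})+[F:E]\,h_1(\bal_F\ovrln{\eta}^{-1})$, which is exactly the statement that one can push mass down the subfield lattice without increasing the objective while making each $\bal_F$ a quotient-norm minimizer. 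Your ``coherence of the dual functional on elements of non-maximal degree'' is thus a self-inflicted second gap; the paper's route never encounters it. If you do want a duality proof you would essentially have to rediscover the combinatorics of Theorem~\ref{thm:orig-maza-fried} in the guise of verifying $|\phi|\le m_1$ block by block, so the explicit minimization lemma is the shorter path.
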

\noindent In other words, the norm of each $\bal_F$ is equal to the quotient norm of $\bal_F$ with respect to any subfield.
  
In order to prove our results, we first prove several results related to heights of algebraic numbers modulo multiplicative group actions very much related to the results of \cite{MF}, which we interpret as results about quotient norms. We then construct an $S$-unit projection which allows us to reduce to finite dimensions and conclude with a new theorem that is used to describe the infimum of $\widetilde m_1$:
\begin{thm4}
 For a given $\bal\in V_{L,S}$, there exists $\ovrln{\eta}\in \overline{V_{K,S}}$ such that the following conditions hold:
\begin{enumerate}
 \item  $\displaystyle h_1(\bal\ovrln{\eta}^{-1}) = \inf_{\bbeta\in V_{K,S}} h_1(\bal\bbeta^{-1})$, and
 \item  $\displaystyle h_1(\ovrln{\eta}) + [L:K]h_1(\bal\ovrln{\eta}^{-1}) = \inf_{\bbeta\in V_{K,S}} \big(h_1(\bbeta) + [L:K]h_1(\bal\bbeta^{-1})\big)$. 
\end{enumerate}
\end{thm4}

\subsection{Applications to Lehmer's problem}
Given that the norms $\widetilde m_p$ are extremal with respect to the Mahler measure, it is natural to ask what applications these norms have to the Lehmer problem. Define $\cA\subset\cG$ to be the set of $\boldsymbol 1\neq \bal\in\cG$ which have a representative $\al$ satisfying the following properties:
\begin{enumerate}
 \item $\al$ is an algebraic unit. 
 \item $[\bQ(\al^n):\bQ] = [\bQ(\al):\bQ]$ for all $n\in\bN$.
 \item For any proper subfield $F$ of $K=\bQ(\al)$, $\Norm^K_F(\al)\in \Tor(F^\times)$.
\end{enumerate}
The conditions of the set $\cA$ are exactly, in the terminology of \cite{FM}, that $\bal$ be a unit, Lehmer irreducible, and projection irreducible, respectively. Then in \cite[Theorem 4]{FM} it is proven that for any $1\leq p\leq\infty$ there exists a constant $c_p$ such that
\begin{equation}\label{eqn:Lp-lehmer-conj}
  m_p(\al)=(\deg\al)\cdot h_p(\al)\geq c_p>0\quad\text{for all}\quad \al\in\Qbar^\times\setminus \Tor(\Qbar^\times)
\end{equation}
if and only if
\begin{equation}
 m_p(\bal)=d(\bal)\cdot h_p(\bal)\geq c_p>0\quad\text{for all}\quad \bal\in\cA.
\end{equation}
We note that equation \eqref{eqn:Lp-lehmer-conj} is equivalent to the Lehmer conjecture for $p=1$ and the Schinzel-Zassenhaus conjecture for $p=\infty$ \cite[Proposition 4.1]{FM}. Therefore we formulate the following conjecture:
\begin{conj}\label{conj:bound-dhhat-on-A}
 For each $1\leq p\leq\infty$, there exists a constant $c_p$ such that
\begin{equation}\label{eqn:dhhat-conj}
  \widetilde m_p(\bal)\geq c_p>0\quad\text{for all}\quad \bal\in\cA.
\end{equation}
\end{conj}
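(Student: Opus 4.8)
Conjecture \ref{conj:bound-dhhat-on-A} is open: any proof would in particular resolve Lehmer's conjecture (when $p=1$) and the Schinzel--Zassenhaus conjecture (when $p=\infty$) \cite[Proposition 4.1]{FM}, so what one can offer is the line of attack together with the one implication that holds unconditionally. That implication is immediate. Taking the trivial factorization $\bal=\bal^{1}$ in the definition of $\widetilde m_p$ shows $\widetilde m_p(\bal)\le m_p(\bal)$ for every $\bal\in\cG$, where $m_p(\bal)=d(\bal)h_p(\bal)$; hence a positive lower bound $\widetilde m_p(\bal)\ge c_p$ on $\cA$ forces $m_p(\bal)\ge c_p$ on $\cA$, which by \cite[Theorem 4]{FM} is equivalent to \eqref{eqn:Lp-lehmer-conj}. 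So Conjecture \ref{conj:bound-dhhat-on-A} implies its classical counterpart, and the real question is the converse: does \eqref{eqn:Lp-lehmer-conj} in turn force $\widetilde m_p$ to be bounded below on $\cA$?

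For the converse, at least when $p=1$, the plan is to run it through Theorem \ref{thm:extremal-infimum}. Given $\boldsymbol 1\neq\bal\in\cA$ with $K_0=\bQ(\al)$, replace $K_0$ by its Galois closure $K$, so $\bal\in V_{K_0}\subseteq V_K$; Theorem \ref{thm:extremal-infimum} supplies a finite set $S$ and a decomposition $\bal=\prod_{F\subseteq K}\bal_F$ with $\bal_F\in\overline{V_{F,S}}$ and
\[
 \widetilde m_1(\bal)=\sum_{F\subseteq K}[F:\bQ]\,h_1(\bal_F),\qquad h_1(\bal_F)=\inf_{\bbeta\in V_{E,S}}h_1(\bal_F/\bbeta)\quad\text{for every }E\subsetneq F.
\]
Since $\bal$ is not torsion some layer $\bal_F$ is nontrivial, and after discarding the layers with $\bal_F\in\overline{V_{E,S}}$ for some $E\subsetneq F$ (which contribute $0$, the rational points of $V_{E,S}$ being dense in $\overline{V_{E,S}}$) we may take $\bal_F$ at positive distance from every proper $V_{E,S}$. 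Then $\bal_F/\bbeta\neq\boldsymbol 1$, so any representative $\gamma\in\Qbar^\times$ of $\bal_F/\bbeta$ satisfies $m_1(\gamma)\ge c_1$ by \eqref{eqn:Lp-lehmer-conj}, whence $h_1(\bal_F/\bbeta)=m_1(\bal_F/\bbeta)/d(\bal_F/\bbeta)\ge c_1/\deg\gamma$. If one could choose $\gamma$ with $\deg\gamma\le[F:\bQ]$ uniformly in $\bbeta$, this would give $h_1(\bal_F)\ge c_1/[F:\bQ]$ and hence $\widetilde m_1(\bal)\ge[F:\bQ]\,h_1(\bal_F)\ge c_1$; the role of the three conditions cutting out $\cA$ --- unit, Lehmer irreducible, projection irreducible --- should be precisely to force this, i.e.\ to force the layers $\bal_F$ of an optimal decomposition to behave like honest elements of degree $[F:\bQ]$.

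The obstacle is exactly that degree (equivalently, denominator) control. The space $\overline{V_{F,S}}$ contains all rational powers $\bbeta^{1/N}$ of $S$-units of $F$, and lifted to $\Qbar^\times$ such a $\bbeta^{1/N}$ has degree growing like $N$ while $h_1(\bbeta^{1/N})=\tfrac1N h_1(\bbeta)\to 0$ even though its minimal Mahler measure stays $\ge c_1$; so a term $[F:\bQ]\,h_1(\bal_F)$ can be arbitrarily small with $\bal_F\neq\boldsymbol 1$. The content of Conjecture \ref{conj:bound-dhhat-on-A} is that the conditions defining $\cA$ keep $\bal$ from being closely approximated, in the $h_1$-metric on the finite-dimensional space furnished by Theorem \ref{thm:extremal-infimum}, by short products of low-height, high-denominator elements, and I expect proving this to be the crux: it appears to demand a genuinely new geometry-of-numbers estimate (a successive-minima type lower bound for $h_1$ on $V_{F,S}\otimes\bR$ near the point representing $\bal$, exploiting that membership in $\cA$ makes that point ``isolated'') rather than anything deducible from Dobrowolski's bound \cite{Dob} or from Lehmer alone. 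Two further points need attention en route: the passage to the Galois closure $K$, where $\bal$ ceases to be projection irreducible and the $S$-unit projection and minimizing theorem (Theorem \ref{thm:minimizing-mod-wk}) must be reconciled with the Galois-equivariant structure of \cite{FM}; and the complete absence of an $L^p$ analogue of Theorem \ref{thm:extremal-infimum} for $p\neq1$, which for general $p$ would have to be developed first (quotient $L^p$-norms, an $S$-unit projection compatible with $h_p$, an $L^p$ minimizing theorem). In view of the unconditional implication above, the strongest statement one can realistically hope to prove in full is the equivalence of Conjecture \ref{conj:bound-dhhat-on-A} with \eqref{eqn:Lp-lehmer-conj}.
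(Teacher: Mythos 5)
The statement you were asked about is a conjecture, and the paper offers no proof of it; you are right to treat it as open, so there is no ``paper's proof'' to compare against. The one thing you do establish unconditionally --- that the trivial factorization gives $\widetilde m_p\le m_p$ on $\cG$, so a lower bound for $\widetilde m_p$ on $\cA$ forces one for $m_p$ on $\cA$, which by \cite[Theorem 4]{FM} is equivalent to \eqref{eqn:Lp-lehmer-conj} --- is precisely the content of Theorem \ref{thm:dh-bounds-implies}, and your diagnosis of why the converse is hard (elements $\bbeta^{1/N}$ have $h_1\to 0$ while their minimal Mahler measure stays bounded below, so a nontrivial layer $[F:\bQ]\,h_1(\bal_F)$ in the decomposition of Theorem \ref{thm:extremal-infimum} can be arbitrarily small) is the correct obstruction. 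The one substantive point you miss is that your closing assessment --- that the equivalence of the conjecture with \eqref{eqn:Lp-lehmer-conj} is the strongest realistic goal --- is actually achieved in the paper for $p=2$ (Theorem \ref{thm:L2-Lehmer-equiv}), and by a mechanism entirely different from the one you sketch: rather than controlling degrees of layers in Theorem \ref{thm:extremal-infimum}, the paper squeezes $\widetilde m_2$ between the seminorm $\|\cdot\|_{m,2}$ of \cite{FM} and $m_2$. Since $\|\cdot\|_{m,2}\le m_2$ and $\|\cdot\|_{m,2}$ already satisfies the triangle inequality and scaling, the extremal property yields $\|\bal\|_{m,2}\le\widetilde m_2(\bal)\le m_2(\bal)$ for all $\bal\in\cG$, and \cite[Theorem 4]{FM} says that bounding $\|\cdot\|_{m,2}$ below on $\cA$ is equivalent to the $L^2$ version of \eqref{eqn:Lp-lehmer-conj}; this sidesteps the degree-control problem entirely, but is special to $p=2$ because it relies on the Hilbert-space norm constructed in \cite{FM}. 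For $p\ne 2$ the conjecture remains, as you say, strictly stronger than anything proved.
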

\begin{thm}\label{thm:dh-bounds-implies}
 If Conjecture \ref{conj:bound-dhhat-on-A} is true, then \eqref{eqn:Lp-lehmer-conj} holds.
\end{thm}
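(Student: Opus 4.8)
The plan is to obtain the theorem as a formal consequence of the equivalence already quoted from \cite[Theorem 4]{FM}: for a fixed $p$, inequality \eqref{eqn:Lp-lehmer-conj} holds for some constant $c_p>0$ if and only if $\mbr_p(\bal)=d(\bal)h_p(\bal)\geq c_p>0$ for every $\bal\in\cA$. Thus the only thing left to supply is that a positive lower bound for $\widetilde m_p$ on $\cA$ forces a positive lower bound for $\mbr_p$ on $\cA$, and this will follow from the elementary pointwise comparison $\widetilde m_p\leq \mbr_p$ on all of $\cG$.

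First I would record that $\widetilde m_p(\bal)\leq \mbr_p(\bal)$ for every $\bal\in\cG$. This is immediate from the defining infimum
\[
 \widetilde m_p(\bal)=\inf_{\bal=\bal_1^{r_1}\cdots\bal_n^{r_n}}\sum_{i=1}^n |r_i|\,\mbr_p(\bal_i),
\]
since the trivial representation ($n=1$, $r_1=1$, $\bal_1=\bal$) already contributes the value $\mbr_p(\bal)$ to the infimum. Note that $\mbr_p$ is a well-defined function on the quotient $\cG$ (this was precisely the reason for passing from $m_p$ to $\mbr_p$), so this inequality makes sense as stated.

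Next, assume Conjecture \ref{conj:bound-dhhat-on-A} and fix $p$ together with a constant $c_p>0$ such that $\widetilde m_p(\bal)\geq c_p$ for all $\bal\in\cA$. Combining this with the previous step gives $\mbr_p(\bal)\geq\widetilde m_p(\bal)\geq c_p>0$ for all $\bal\in\cA$, which is exactly the right-hand condition in the equivalence of \cite[Theorem 4]{FM}. Hence \eqref{eqn:Lp-lehmer-conj} holds with the same constant $c_p$, which is the assertion of the theorem; for $p=1$ this recovers Lehmer's conjecture and for $p=\infty$ the Schinzel--Zassenhaus conjecture.

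I do not anticipate a genuine obstacle: the statement is a formal corollary of the infimum construction of $\widetilde m_p$ together with the cited equivalence, and all of the substantive reduction — from the height inequality on all of $\Qbar^\times$ to the restricted class $\cA$ of unit, Lehmer-irreducible, and projection-irreducible classes — is already packaged in \cite[Theorem 4]{FM}. The only points requiring mild care are keeping track of the constant and respecting the (purely notational) distinction between $m_p$ as a function on $\Qbar^\times$ and $\mbr_p$ as the minimal logarithmic $L^p$ Mahler measure on $\cG$.
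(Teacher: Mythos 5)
Your proposal is correct and is essentially the argument the paper leaves implicit (the theorem is stated without an explicit proof block, evidently being regarded as immediate). The two ingredients you isolate are exactly the right ones: the pointwise bound $\widetilde m_p(\bal)\leq \mbr_p(\bal)$ from the trivial factorization (which is also item (1) of Proposition \ref{prop:norm-height-properties}), and the equivalence from \cite[Theorem 4]{FM} reducing \eqref{eqn:Lp-lehmer-conj} to the lower bound for $\mbr_p$ on $\cA$.

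One very small caveat: your closing remark that \eqref{eqn:Lp-lehmer-conj} ``holds with the same constant $c_p$'' is slightly stronger than what is actually being claimed; as stated, the equivalence from \cite[Theorem 4]{FM} guarantees the existence of some positive constant, and the theorem only asserts existence. This does not affect the validity of the argument, but you should not commit to the constant being preserved unless \cite[Theorem 4]{FM} is known to preserve it.
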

In particular, for $p=1$ \eqref{eqn:dhhat-conj} implies that Lehmer's conjecture is true, and for $p=\infty$ equation \eqref{eqn:dhhat-conj} implies that the Schinzel-Zassenhaus conjecture is true.

For $p\neq 2$, we are unable to prove the converse to Theorem \ref{thm:dh-bounds-implies}. However, when $p=2$ we are able to prove that:
\begin{thm}\label{thm:L2-Lehmer-equiv}
There exists a constant $c_2$ such that  
 \begin{equation*}
  m_2(\al)=(\deg\al)\cdot h_2(\al)\geq c_2>0\quad\text{for all}\quad \al\in\Qbar^\times\setminus \Tor(\Qbar^\times)
 \end{equation*}
if and only if
 \[
  \widetilde m_2 (\bal)\geq c_2>0\quad\text{for all}\quad \bal\in\cA.
 \]
\end{thm}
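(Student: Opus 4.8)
The $(\Leftarrow)$ direction — that a positive lower bound for $\widetilde m_2$ on $\cA$ forces one for $m_2$ on $\Qbar^\times\setminus\Tor(\Qbar^\times)$ — is already Theorem~\ref{thm:dh-bounds-implies} at $p=2$, since Conjecture~\ref{conj:bound-dhhat-on-A} for $p=2$ is exactly the hypothesis on $\cA$. So the real content is the converse. Assume $m_2(\beta)\geq c_2>0$ for all $\beta\in\Qbar^\times\setminus\Tor(\Qbar^\times)$; I will show $\widetilde m_2(\bal)\geq c_2$ for every $\bal\in\cA$ (even a weaker estimate $\widetilde m_2(\bal)\geq\kappa\,m_2(\bal)$ with $\kappa>0$ absolute would suffice for the equivalence of the two conjectures, but the inner-product structure of $h_2$ should give back the same constant). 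Fix $\bal\in\cA$ with a representative $\alpha$ satisfying the three defining properties, put $K=\bQ(\alpha)$ and $d=[K:\bQ]$; Lehmer irreducibility gives $d(\bal)=\deg\alpha=d$, hence $m_2(\bal)=m_2(\alpha)\geq c_2$. It therefore suffices to prove the identity $\widetilde m_2(\bal)=m_2(\bal)$, i.e.\ that no nontrivial $\bQ$-linear decomposition $\bal=\bal_1^{r_1}\cdots\bal_n^{r_n}$ does better than the trivial one.

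First I would reduce to a finite-dimensional Euclidean setting. Given such a decomposition, pick a finite Galois extension $L/\bQ$ containing $\alpha$ and all the $\bal_i$, and a $\Gal(L/\bQ)$-stable finite set $S$ of places of $L$ containing the archimedean places and every place at which some $\bal_i$ is not a unit; then $\bal$ and all $\bal_i$ lie in $\overline{V_{L,S}}$, on which $h_2$ is a genuine inner-product norm invariant under $G=\Gal(L/\bQ)$. The $S$-unit projection from the proof of Theorem~\ref{thm:extremal-infimum} (which projects the logarithmic embedding and hence does not increase $h_2$) lets us compute $\widetilde m_2(\bal)$ using only decompositions supported in $\overline{V_{L,S}}$, and the $L^2$ analogue of Theorems~\ref{thm:extremal-infimum} and~\ref{thm:minimizing-mod-wk} then yields the subfield description $\widetilde m_2(\bal)=\sum_{F\subseteq L}[F:\bQ]\,h_2(\bal_F)$, where $\bal=\prod_F\bal_F$, $\bal_F\in\overline{V_{F,S}}$, and $h_2(\bal_F)=\inf_{\bbeta\in V_{E,S}}h_2(\bal_F\bbeta^{-1})$ for each proper subfield $E\subsetneq F$. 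For $p=2$ this analogue is the easy case: the minimizer over $V_{E,S}$ is the orthogonal projection, which immediately satisfies condition~(1), and a one-variable convexity argument along the segment to that projection gives condition~(2). In particular the images $\log\bal_F$ are the pieces of an \emph{orthogonal} decomposition of $\log\bal$ (the image of $\bal$) in $W:=\overline{V_{L,S}}\otimes\bR$ along the lattice of fixed subspaces $W^{G_F}$, where $G_F=\Gal(L/F)$.

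Now I would feed in the membership $\bal\in\cA$. With $H=\Gal(L/K)$ one has $\log\bal\in W^H=W^{G_K}$, and projection-irreducibility of $\alpha$ says precisely that the orthogonal projection of $\log\bal$ onto $W^{G_E}$ vanishes for every proper subfield $E\subsetneq K$, since that projection equals $\tfrac1{[K:E]}\log\Norm^K_E(\bal)$, which is trivial exactly when $\Norm^K_E(\alpha)\in\Tor(\Qbar^\times)$. Combined with $\log\bal\in W^{G_K}$, these relations pin down the subfield decomposition: since $W^{G_K}\cap W^{G_F}=W^{G_{K\cap F}}$ and $K\cap F$ is a proper subfield of $F$ whenever $F\not\subseteq K$ (the cases $F\subsetneq K$ and $F\supsetneq K$ being immediate), every component $\bal_F$ with $F\neq K$ is forced to be torsion; the only surviving piece is $\bal_K=\bal$, and its quotient $h_2$-norm modulo every proper subfield $E\subsetneq K$ is just $h_2(\bal)$ because $\log\bal\perp W^{G_E}$. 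Substituting back, $\widetilde m_2(\bal)=[K:\bQ]\,h_2(\bal)=m_2(\bal)=m_2(\alpha)\geq c_2$, which proves the converse and the theorem.

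The main obstacle is the collapse of the subfield decomposition in the last step: one must verify that the $L^2$ analogue of Theorem~\ref{thm:extremal-infimum} really does produce an \emph{orthogonal} decomposition along $\{W^{G_F}\}$ — so that the relations imposed by the conditions defining $\cA$ annihilate all lower components of $\bal$, even inside a Galois closure that may be strictly larger than $\bQ(\alpha)$ — and to handle the case where the subgroup lattice of $G$ fails to be modular, so that the relevant orthogonal projections need not commute. This is exactly the point with no counterpart for $p\neq2$: there $h_p$ is merely a Banach norm, orthogonal projections are unavailable, and the subfield decomposition underlying $\widetilde m_p$ has no reason to collapse on $\cA$, which is why the converse is only within reach for $p=2$. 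A secondary, essentially routine task is transcribing the $S$-unit projection and Theorem~\ref{thm:minimizing-mod-wk} from the $L^1$ to the $L^2$ setting.
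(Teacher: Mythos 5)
Your handling of the $(\Leftarrow)$ direction is fine; it is exactly Theorem~\ref{thm:dh-bounds-implies} at $p=2$. But for the $(\Rightarrow)$ direction you aim at a substantially stronger claim than the paper does, and the gap you flag at the end is real and not a side issue.

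The paper's own proof is a two-line squeeze: citing \cite{FM} it takes the already-constructed norm $\|\cdot\|_{m,2}$, invokes \cite[Theorem 6]{FM} to get $\|\bal\|_{m,2}\leq m_2(\bal)$ and hence, by the extremal property, $\|\bal\|_{m,2}\leq \widetilde m_2(\bal)\leq m_2(\bal)$ for all $\bal\in\cG$; then \cite[Theorem 4]{FM} (the equivalence of bounding $\|\cdot\|_{m,2}$ on $\cA$ and bounding $m_2$ on all of $\Qbar^\times\setminus\Tor(\Qbar^\times)$) turns the sandwich into the stated iff. Nothing more. In particular the paper never claims, and does not need, the pointwise identity $\widetilde m_2=m_2$ on $\cA$ that your argument is built to deliver.

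You instead try to prove $\widetilde m_2(\bal)=m_2(\bal)$ for each $\bal\in\cA$, by (i) positing an $L^2$ analogue of Theorem~\ref{thm:extremal-infimum} that would write $\widetilde m_2(\bal)$ as $\sum_F[F:\bQ]h_2(\bal_F)$ over a subfield decomposition, (ii) asserting that this decomposition is an \emph{orthogonal} decomposition along $\{W^{G_F}\}$, and (iii) using projection irreducibility to collapse all pieces but $\bal_K$. Step (iii) is fine once (i) and (ii) are in hand: since $P_F$ is the orthogonal projection onto $\overline{V_F}$ in $\cG_2$ and projection irreducibility says $P_E\bal=\boldsymbol 1$ for every proper $E\subsetneq K$, the collapse is immediate when $K=\bQ(\al)$ is Galois. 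But (i) is not in the paper and is not routine (the paper's Theorem~\ref{thm:extremal-infimum} is genuinely an $L^1$ result using $L^1$-specific convexity analysis in Theorems~\ref{thm:orig-maza-fried}--\ref{thm:minimizing-mod-wk}), and (ii) hits exactly the obstruction you name: the projections $P_F$ and $P_{F'}$ are only shown to commute when one of the fields is Galois (Lemma~\ref{lemma:PKcommute}), so for non-Galois intermediate fields of the Galois closure $L$ there is no commuting family of orthogonal projections and hence no free orthogonal decomposition indexed by $\{W^{G_F}:F\subseteq L\}$. Your intersection observation $W^{G_K}\cap W^{G_F}=W^{G_{K\cap F}}$ does not by itself force the $\bal_F$ with $F\not\subseteq K$ to vanish unless the relevant projections commute. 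This is precisely the hard content \cite{FM} develops in order to build $\|\cdot\|_{m,2}$, and it is why the paper routes the argument through that cited machinery rather than carrying it out here. So the proposal is a genuinely different and more ambitious route, but as written it has a real hole at its central step; filling it would amount to reproving the orthogonal-decomposition results of \cite{FM}.
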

\begin{proof}
 In \cite {FM}, we construct a norm $\|\cdot\|_{m,2}$, and prove in \cite[Theorem 4]{FM} that bounding $\|\cdot\|_{m,2}$ away from zero on $\cA$ is equivalent to bounding $m_2$ away from zero on $\Qbar^\times\setminus \Tor(\Qbar^\times)$.  Further, in \cite[Theorem 6 et seq.]{FM} we prove that $\|\bal\|_{m,2}\leq m_2(\bal)$ for all $\bal\in\cG$.  It follows by the extremal property for $\widetilde m_2$ that 
 \[
  \|\bal\|_{m,2}\leq \widetilde m_2(\bal)\leq m_2(\bal)
 \]
 for all $\bal\in\cG$, and the claim now follows.
\end{proof}

\thanks{At this point the authors would like to acknowledge Jeffrey Vaaler for many helpful conversations in general, and specifically for his contributions to Lemma \ref{lemma:alpha-v-exists}, as well as Clayton Petsche and Felipe Voloch for helpful remarks regarding this same lemma.}

 The format of this paper is as follows. In Section \ref{sect:Proj-delta-lemmas} we prove basic results about degree functions on $\cG$ and projections onto subspaces.  In Section \ref{sect:construction} we will construct the extremal norms $\widetilde m_p$ arrived at by the infimum process and examine explicit classes of algebraic numbers for which we can compute the value of the norms (for example, on surds and in the $p=1$ case on Salem and Pisot numbers). Lastly in Section \ref{sect:dh1-infimum} we will study $\widetilde m_1$ in particular and prove for any given class of an algebraic number that the infimum in its construction is attained in a finite dimensional vector space.

\section{Preliminary Lemmas}\label{sect:Proj-delta-lemmas}
\subsection{Subspaces associated to number fields}
We will now prove some lemmas regarding the relationship between certain subspaces determined by number fields. Let $G=\Gal(\Qbar/\bQ)$ and let us define
\[
 \cK = \{ K/\bQ : [K:\bQ]<\infty\}\quad\text{and}\quad 
 \cK^G = \{ K\in\cK : \sigma K =K\ \forall \sigma\in G\}.
\]
Let us briefly recall the combinatorial properties of the sets $\cK$ and $\cK^G$ partially ordered by inclusion. Recall that $\cK$ and $\cK^G$ are \emph{lattices}, that is, partially ordered sets for which any two elements have a unique greatest lower bound, called the \emph{meet}, and a least upper bound, called the \emph{join}. Specifically, for any two fields $K,L$, the meet $K\wedge L$ is given by $K\cap L$ and the join $K\vee L$ is given by $KL$. If $K,L$ are Galois then both the meet (the intersection) and the join (the compositum) are Galois as well, thus $\cK^G$ is also a lattice. Both lattices have a minimal element, namely $\bQ$, and are \emph{locally finite}, that is, between any two fixed elements we have a finite number of intermediate elements.

For each $K\in\cK$, let
\[
 V_K = \{\bal^r:r\in\bQ\text{ and }\bal\in K^\times/\Tor(K^\times) \}.
\]
Then $V_K$ is the subspace of $\cG$ spanned by elements of $K^\times/\Tor(K^\times)$.  We call a subspace of the form $V_K$ for $K\in\cK$ a \emph{distinguished} subspace.  Suppose we fix an algebraic number $\bal\in\cG$. Then the set 
\[
\{ K\in\cK : \bal\in V_K\}
\]
forms a sublattice of $\cK$, and by the finiteness properties of $\cK$ this set must contain a unique minimal element.
\begin{defn}
 For any $\bal\in \cG$, the \emph{minimal field} is defined to be the minimal element of the set $\{ K\in\cK : \bal\in V_K\}$. We denote the minimal field of $\bal$ by $K_{\bal}$.
\end{defn}

Note that the action of $G=\Gal(\Qbar/\bQ)$ on $\cG$ is well-defined (see \cite{AV}).

\begin{lemma}\label{lemma:Stab-min-field}
 For any $\bal\in\cG$, we have $\Stab_{G}(\bal) = \Gal(\Qbar/K_{\bal}) \leq G$.
\end{lemma}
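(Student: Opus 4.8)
The plan is to prove the two inclusions $\Gal(\Qbar/K_{\bal}) \subseteq \Stab_G(\bal)$ and $\Stab_G(\bal) \subseteq \Gal(\Qbar/K_{\bal})$ separately. The first is immediate: if $\sigma$ fixes $K_{\bal}$ pointwise, then since $\bal \in V_{K_{\bal}}$ — meaning some coset representative $\al$ satisfies $\al^n \in K_{\bal}^\times$ for some $n$ — we get $\sigma(\al^n) = \al^n$, so $\sigma(\al)/\al$ is an $n$-th root of unity, hence $\sigma(\bal) = \bal$ in $\cG$. Thus $\sigma \in \Stab_G(\bal)$.

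For the reverse inclusion, the key observation is that $\Stab_G(\bal)$ is a subgroup of $G$ (the stabilizer of a point under a group action), and since $\bal$ has a representative $\al$ with $\al^n \in K^\times$ for some finite Galois $K$, the stabilizer $\Stab_G(\bal)$ contains $\Gal(\Qbar/K')$ for $K' = K(\zeta)$ with $\zeta$ a suitable root of unity; in particular it is an open (finite-index) subgroup of $G$, so by the Galois correspondence it equals $\Gal(\Qbar/L)$ for a unique $L \in \cK$. It then suffices to show $L = K_{\bal}$. First I would show $L \subseteq K_{\bal}$, which follows from the inclusion already proved: $\Gal(\Qbar/K_{\bal}) \subseteq \Stab_G(\bal) = \Gal(\Qbar/L)$ forces $L \subseteq K_{\bal}$ by the order-reversing Galois correspondence. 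The real content is the opposite containment $K_{\bal} \subseteq L$, for which I must show $\bal \in V_L$; by minimality of $K_{\bal}$ this gives $K_{\bal} \subseteq L$ and completes the proof.

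To see $\bal \in V_L$: pick any representative $\al$ and let $K$ be a finite Galois field containing $\al$. Every $\sigma \in \Gal(\Qbar/L)= \Stab_G(\bal)$ satisfies $\sigma(\al)/\al \in \Tor(\Qbar^\times)$. Consider the "norm-like" element obtained by multiplying over a set of coset representatives, or more cleanly: the element $\bal$ is fixed by $\Gal(\Qbar/L)$ acting on $\cG$, and one wants to descend it to $V_L$. The cleanest route is to look at $\beta = \prod_{\sigma \in \Gal(K/L)} \sigma(\al)$, the relative norm $\Norm^K_L(\al) \in L^\times$; since each $\sigma(\al) = \zeta_\sigma \al$ with $\zeta_\sigma$ a root of unity, we get $\beta = \zeta\, \al^{[K:L]}$ for a root of unity $\zeta$, hence $\bbeta = \bal^{[K:L]}$ in $\cG$. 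Therefore $\bal^{[K:L]} \in V_L$, and since $V_L$ is a $\bQ$-vector space (closed under rational powers), $\bal \in V_L$ as desired.

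The main obstacle, and the point requiring care, is verifying that $\Stab_G(\bal)$ is actually closed (equivalently open, since it has finite index) so that the Galois correspondence applies and produces a genuine subfield $L \in \cK$ rather than merely an abstract subgroup; this is exactly what the choice of a finite Galois $K$ with $\al^n \in K^\times$ guarantees, since then $\Gal(\Qbar/K(\zeta_n)) \subseteq \Stab_G(\bal)$ exhibits it as open. Once that is in hand, the argument is a routine application of the Galois correspondence together with the norm computation above, using crucially that $V_L$ is closed under taking rational powers.
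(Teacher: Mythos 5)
Your proof is correct, and it takes a noticeably different (and more fleshed-out) route than the paper's. The paper argues the reverse containment $\Stab_G(\bal) \subseteq \Gal(\Qbar/K_{\bal})$ directly: it picks $\ell$ with $K_{\bal} = \bQ(\al^\ell)$ and claims that if some $\sigma \in \Stab_G(\bal)$ failed to fix $\al^\ell$, then a proper subfield of $K_{\bal}$ would contain a power of $\al$, contradicting minimality. Your argument instead first establishes that $\Stab_G(\bal)$ is an open subgroup (because it contains $\Gal(\Qbar/K)$ for any finite Galois $K$ with $\al^n \in K^\times$), invokes the Galois correspondence to write $\Stab_G(\bal) = \Gal(\Qbar/L)$, and then pins down $L = K_{\bal}$ via the relative norm $\Norm^K_L(\al) = \zeta\al^{[K:L]} \in L^\times$, which shows $\bal \in V_L$. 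This is essentially the same norm computation that reappears in the paper's Lemma \ref{lemma:PK-to-VK} for the projections $P_K$, so your proof is very much in the spirit of the paper even though the paper's own proof of this particular lemma is terser and avoids explicitly invoking the infinite Galois correspondence. What your approach buys is transparency: the openness of $\Stab_G(\bal)$ and the identification of its fixed field as $K_{\bal}$ are made explicit, whereas the paper's contradiction sentence is compressed enough that filling in the details essentially recovers your argument. One small redundancy: adjoining a root of unity to get $K' = K(\zeta)$ is unnecessary, since $\Gal(\Qbar/K) \subseteq \Stab_G(\bal)$ already follows from $\al^n \in K^\times$.
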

\begin{notation}
 By $\Stab_G(\bal)$ we mean the $\sigma\in G$ such that $\sigma \bal = \bal$. As this tacit identification is convenient we shall use it throughout with no further comment.
\end{notation}
\begin{proof}
Clearly $\Gal(\Qbar/K_{\bal})\leq \Stab_G(\bal)$, as $\al^\ell\in K_{\bal}$ for some $\ell\in\bN$ by definition of $V_{K_{\bal}}$. To see the reverse containment, observe that $K_{\bal}=\bQ(\al^\ell)$ for some $\ell\in\bN$.  Now, for $\sigma\in\Stab_G(\bal)$, we have $\sigma\al=\zeta\al$ for $\zeta\in\Tor(\bQ^\times)$.  Then if $\al^\ell\neq(\zeta\al)^\ell$, there would be a proper subfield of $K_{\bal}$ which contains a power of $\al$, contradicting the definition of $K_{\bal}$.
\end{proof}

\subsection{Lehmer irreducibility}\label{subsect:d-and-l}
Observe that the action of the absolute Galois group $G=\Gal(\Qbar/\bQ)$ is well-defined on the vector space of algebraic numbers modulo torsion $\cG$, and in fact it is easy to see that each Galois automorphism gives rise to a distinct isometry of $\cG$ in the $h_p$ norm (see \cite[\S 2.1]{FM} for more details). Let us denote the image of the class $\bal$ under $\sigma\in G$ by $\sigma\bal$. In order to associate a notion of degree to a subspace in a meaningful fashion so that we can define our norms associated to the Mahler Measure we define the function $\delta : \cG\ra \bN$ by
\begin{equation}\label{eqn:delta-defn}
 \delta(\bal) = \#\{\sigma\bal : \sigma\in G\}=[G:\Stab_G(\bal)]=[K_{\bal}:\bQ]
\end{equation}
to be the size of the orbit of $\bal$ under the Galois action, with the last equality above following from Lemma \ref{lemma:Stab-min-field}.

Observe that since taking roots or powers does not affect the $\bQ$-vector space span, and in particular the minimal field $K_{\bal}$, the function $\delta$ is invariant under nonzero scaling in $\cG$, that is, $\delta(\bal^r)=\delta(\bal)$ for all $0\neq r\in\bQ$. In order to better understand the relationship between our elements in $\cG$ and their representatives in $\Qbar^\times$, we need to understand when an $\bal\in V_K$ has a representative $\al\in K^\times$ (or is merely a root of an element $\al^n\in K^\times$ for some $n>1$). Naturally, the choice of coset representative modulo torsion affects this, and we would like to avoid such considerations. Therefore we define the function $d:\cG\ra\bN$ by
\begin{equation}\label{eqn:d-defn}
 d(\bal) = \min\{\deg \zeta\al : \al\in\Qbar^\times,\ \zeta\in\Tor(\Qbar^\times)\}.
\end{equation}
In other words, for a given $\bal\in\cG$, which is an equivalence class of an algebraic number modulo torsion, $d(\bal)$ gives us the minimum degree amongst all of the coset representatives in $\Qbar^\times$ modulo the torsion subgroup.

A number $\bal\in\cG$ can then be represented by an algebraic number in $K_{\bal}^\times$ if and only if $d(\bal)=\delta(\bal)$. We therefore make the following definition:
\begin{defn}
 We define the set of \emph{Lehmer irreducible} elements of $\cG$ to be the set
\begin{equation}\label{eqn:LI-defn}
 \cL = \{\bal\in\cG : \delta(\bal) = d(\bal) \}.
\end{equation}
The set $\cL$ consists precisely of the $\bal\in\cG$ that can be represented by some $\al\in\Qbar^\times$ of degree equal to the degree of the minimal field $K_{\bal}$ of $\bal$.
\end{defn}
We recall the terminology from \cite{D} that a number $\al\in\Qbar^\times$ is \emph{torsion-free} if $\al/\sigma\al\not\in\Tor(\Qbar^\times)$ for all distinct Galois conjugates $\sigma\al$. Thus, torsion-free numbers give rise to distinct elements $\sigma\bal\in\cG$ for each distinct Galois conjugate $\sigma\al$ of $\al$ in $\Qbar$.

\begin{lemma}\label{lemma:ell-irred}
 We have the following:
 \begin{enumerate}
  \item For each $\bal\in\cG$, there is a unique minimal exponent $\ell(\bal)\in\bN$ such that $\bal^{\ell(\bal)}\in\cL$.
  \item For any $\al\in\Qbar^\times$, we have $\delta(\bal) | \deg\al$.
  \item $\bal\in\cL$ if and only if it has a representative in $\Qbar^\times$ which is torsion-free. 
 \end{enumerate}
\end{lemma}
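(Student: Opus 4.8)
The plan is to prove the three assertions in the order (2), (3), (1), since each relies on the previous. For (2), fix any representative $\al\in\Qbar^\times$ of $\bal$. Then $\al\in\bQ(\al)^\times$, so $\bal\in V_{\bQ(\al)}$, and minimality of the minimal field gives $K_{\bal}\subseteq\bQ(\al)$. By the tower law $[K_{\bal}:\bQ]$ divides $[\bQ(\al):\bQ]=\deg\al$, and since $\delta(\bal)=[K_{\bal}:\bQ]$ by \eqref{eqn:delta-defn} this is exactly the claim. I would record here the immediate corollary, obtained by minimizing over representatives, that $\delta(\bal)\le d(\bal)$ for every $\bal\in\cG$; this inequality is the glue for the rest.

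For (3), first suppose $\bal$ has a torsion-free representative $\al$. Torsion-freeness means the assignment $\sigma\al\mapsto\sigma\bal$ is a well-defined bijection from the set of Galois conjugates of $\al$ onto $\{\sigma\bal:\sigma\in G\}$, so $\delta(\bal)=\#\{\sigma\al\}=\deg\al$; combined with $\delta(\bal)\le d(\bal)\le\deg\al$ from (2) this forces $d(\bal)=\delta(\bal)$, i.e. $\bal\in\cL$. Conversely, if $\bal\in\cL$, choose a representative $\al$ with $\deg\al=d(\bal)=\delta(\bal)=[K_{\bal}:\bQ]$; since $K_{\bal}\subseteq\bQ(\al)$ and the degrees coincide, $\bQ(\al)=K_{\bal}$. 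If $\al$ were not torsion-free, some $\sigma$ with $\sigma\al\ne\al$ would satisfy $\al/\sigma\al\in\Tor(\Qbar^\times)$, hence $\sigma\bal=\bal$, hence $\sigma\in\Stab_G(\bal)=\Gal(\Qbar/K_{\bal})$ by Lemma \ref{lemma:Stab-min-field}; but then $\sigma$ fixes $\bQ(\al)=K_{\bal}$, so $\sigma\al=\al$, a contradiction. Thus $\al$ is torsion-free.

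For (1), it suffices to show the set $\{k\in\bN:\bal^k\in\cL\}$ is nonempty, since a nonempty subset of $\bN$ has a unique least element, which we then name $\ell(\bal)$. From the proof of Lemma \ref{lemma:Stab-min-field} we may write $K_{\bal}=\bQ(\al^{n})$ for some $n\in\bN$; since passing to a nonzero power does not change the $\bQ$-span and hence fixes the minimal field, $K_{\bal^{n}}=K_{\bal}$, and $\al^{n}$ is a representative of $\bal^{n}$ lying in $K_{\bal^{n}}^\times$, so $d(\bal^{n})\le[K_{\bal}:\bQ]=\delta(\bal^{n})$; with (2) this gives equality and $\bal^{n}\in\cL$. (If it is needed downstream, I would also observe that this set is closed under passing to multiples: a power $\gamma^{j}$ of a torsion-free $\gamma$ is again torsion-free, because $\gamma^{j}/\sigma(\gamma^{j})=(\gamma/\sigma\gamma)^{j}$ and a power of a non-root of unity is a non-root of unity.) None of these steps is deep; the only points needing care are the correct use of Lemma \ref{lemma:Stab-min-field} in the converse of (3) to upgrade ``$\sigma\al$ is torsion-equivalent to $\al$'' to ``$\sigma$ fixes $\bQ(\al)$'', and making sure in (1) that the chosen power actually places a representative inside $K_{\bal}^\times$ rather than merely inside some $V_K$.
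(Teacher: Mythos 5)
Your argument is correct, and it rearranges the paper's proof in a way that is arguably cleaner. The paper establishes part (1) first by exhibiting a specific exponent $\ell$ as a least common multiple of orders of $\al/\sigma\al$, argues that a torsion-free number $\beta$ has $\deg\beta=\delta(\bbeta)$, and then derives (2) from the chain $\delta(\bal)=[\bQ(\al^\ell):\bQ]\mid[\bQ(\al):\bQ]$, after which (3) is asserted to ``follow immediately.'' You instead prove (2) directly and independently of (1): any representative $\al$ gives $\bal\in V_{\bQ(\al)}$, so minimality forces $K_{\bal}\subseteq\bQ(\al)$, and the tower law gives $[K_{\bal}:\bQ]\mid\deg\al$. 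Frontloading this removes the worry that concluding $\bal^\ell\in\cL$ in (1) already uses the inequality $\delta\le d$, which is really a corollary of (2). Your treatments of (3) and (1) then run along essentially the same lines as the paper's: torsion-freeness makes distinct conjugates of $\al$ land on distinct classes in $\cG$ so $\deg\al=\delta(\bal)$, and the converse is pinned down via Lemma \ref{lemma:Stab-min-field} and the observation that $\bQ(\al)=K_{\bal}$ for a minimal-degree representative, a step the paper leaves implicit. For (1) you take the power $n$ with $K_{\bal}=\bQ(\al^n)$ from the proof of Lemma \ref{lemma:Stab-min-field} instead of the paper's explicit lcm; both exhibit a power lying in $\cL$, after which minimality follows from well-ordering of $\bN$. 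All steps check out.
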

\begin{proof}
 For $\bal\in\cG$, choose a representative $\al\in\Qbar^\times$ and let 
 \[
  \ell = \lcm\{ \ord(\al/\sigma\al) : \sigma\in G\text{ and }\al/\sigma\al\in\Tor(\Qbar^\times)\}
 \]
 where $\ord(\zeta)$ denotes the order of an element $\zeta\in\Tor(\Qbar^\times)$. Then observe that $\al^\ell$ is torsion-free. Now if a number $\beta\in\Qbar^\times$ is torsion-free, then each distinct conjugate $\sigma\beta$ determines a distinct element in $\cG$, so we have
 \[
  \deg \beta = [G : \Stab_G(\bbeta)] = [K_{\bbeta} : \bQ]=\delta(\bbeta).
 \]
 Thus $\deg \al^\ell = \delta(\bal^\ell)$.  
 This proves existence in the first claim, and the existence of a minimum value follows since $\bN$ is discrete. To prove the second claim, observe that $\bQ(\al^\ell)\subset \bQ(\al)$, so with the choice of $\ell$ as above, we have $\delta(\bal)=[\bQ(\al^\ell):\bQ] | [\bQ(\al):\bQ]=\deg \al$ for all $\al\in\Qbar^\times$. The third now follows immediately.
\end{proof}
It is proven in \cite{FM} that in fact, the minimal value $\ell(\bal)$ satisfies $d(\bal) = \ell(\bal) \delta(\bal)$.

\subsection{Projections to $V_K$}
For $K\in\cK$, we wish to define an operator that projects an element $\bal\in\cG$ onto the subspace $V_K$.  Let $H=\Gal(\Qbar/K)\leq G$, and let $\sigma_1,\hdots,\sigma_k$ be right coset representatives from $\Stab_H(\bal)\leq H$, with $k=[H:\Stab_H(\bal)]$. Define the map $P_K$ on elements of $\cG$ via
\begin{equation}
  P_K(\bal) =\left(\prod_{i=1}^k\sigma_i(\bal)\right)^{1/k}.
\end{equation}

\begin{lemma} \label{lemma:PK-to-VK}
Let $\bal\in\cG$.  Then:
\begin{enumerate}
 \item $P_K(\bal^r)=P_K(\bal)^r.$
 \item $P_K(\bal)\in V_K.$
 \item $P_K(\bal)=\bal$ for all $\bal\in V_K$.
\end{enumerate}
 
\end{lemma}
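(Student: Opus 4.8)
The plan is to verify the three assertions directly from the definition of $P_K$, being careful that the construction does not depend on the choice of coset representatives. First I would note that if $\sigma_1,\dots,\sigma_k$ are right coset representatives for $\Stab_H(\bal)$ in $H$, then the multiset $\{\sigma_i\bal : 1\le i\le k\}$ is exactly the orbit $H\cdot\bal$, each element appearing once; hence the product $\prod_{i=1}^k \sigma_i(\bal)$ is independent of the choice of representatives, and so $P_K(\bal)$ is well-defined as an element of $\cG$ (recall that $\cG$ is uniquely divisible, so the $k$-th root makes sense and is unique). For part (1), observe that $\Stab_H(\bal^r)=\Stab_H(\bal)$ for $0\ne r\in\bQ$ since scaling does not change the stabilizer (as remarked before equation \eqref{eqn:d-defn} and in the discussion of $\delta$), so the same coset representatives $\sigma_1,\dots,\sigma_k$ work for $\bal^r$, and $P_K(\bal^r)=\left(\prod_i \sigma_i(\bal^r)\right)^{1/k}=\left(\prod_i \sigma_i(\bal)^r\right)^{1/k}=\left(\left(\prod_i\sigma_i(\bal)\right)^{1/k}\right)^r=P_K(\bal)^r$, using that the Galois action on $\cG$ is $\bQ$-linear.

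For part (2), I would show that $P_K(\bal)$ is fixed by every $\tau\in H=\Gal(\Qbar/K)$, which by Lemma \ref{lemma:Stab-min-field} forces $K_{P_K(\bal)}\subseteq K$ and hence $P_K(\bal)\in V_K$. Given $\tau\in H$, left multiplication by $\tau$ permutes the right cosets $\Stab_H(\bal)\sigma_i$, so $\{\tau\sigma_i : 1\le i\le k\}$ is again a complete set of right coset representatives; applying $\tau$ to the defining product just permutes its factors, so $\tau\!\left(\prod_i\sigma_i(\bal)\right)=\prod_i(\tau\sigma_i)(\bal)=\prod_i\sigma_i(\bal)$, and taking $k$-th roots (which commutes with the Galois action and is unique) gives $\tau(P_K(\bal))=P_K(\bal)$. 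For part (3), if $\bal\in V_K$ then $H\le\Stab_G(\bal)$, so $\Stab_H(\bal)=H$, $k=1$, and the single coset representative may be taken to be the identity, whence $P_K(\bal)=\bal$.

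I do not expect a genuine obstacle here; the only point requiring care is the well-definedness of the product over coset representatives (and the interaction of the $k$-th root with the Galois action), which is the bookkeeping underlying all three parts. Everything else is a formal consequence of the orbit–stabilizer correspondence together with the $\bQ$-linearity of the Galois action on $\cG$ established in \cite{AV} and \cite{FM}.
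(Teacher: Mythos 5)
Your proof is correct, and for part (2) it takes a genuinely different route from the paper. The paper's proof rescales so that $\bal\in\cL$, chooses a torsion-free representative $\al\in\Qbar^\times$, and observes that $P_K(\bal)$ is then represented by the relative norm $N_K^{K(\al)}(\al)\in K^\times$, the point being that torsion-freeness makes the $\cG$-orbit and the $\Qbar^\times$-orbit coincide so that the product $\prod_i\sigma_i(\al)$ genuinely lies in $K^\times$; part (3) is handled the same way via $N_K^{K(\al)}(\al)=\al^k$. You instead stay entirely inside $\cG$: you show $P_K(\bal)$ is fixed pointwise by $H=\Gal(\Qbar/K)$ and invoke Lemma \ref{lemma:Stab-min-field} to conclude $K_{P_K(\bal)}\subseteq K$, i.e.\ $P_K(\bal)\in V_K$. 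This avoids the detour through torsion-free lifts, and arguably shows more clearly why $P_K$ is a ``Galois averaging'' operator; the paper's route has the advantage of making the connection to the classical norm map explicit, which is used again implicitly in later computations. Your explicit well-definedness check (that the product over a full set of coset representatives depends only on the orbit, and that unique divisibility of $\cG$ makes the $k$-th root unambiguous) is a useful addition not spelled out in the paper.

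One small caution on the bookkeeping, which you inherit from the paper's phrasing: the statement that ``left multiplication by $\tau$ permutes the \emph{right} cosets $\Stab_H(\bal)\sigma_i$'' is not true for a non-normal stabilizer. What is true — and is the only thing you actually need — is exactly what you establish first, namely that $\{\sigma_i\bal\}$ enumerates the orbit $H\cdot\bal$ once each. Since the orbit is $H$-stable, $\tau\in H$ permutes it, and the product is $\tau$-invariant; no claim about $\{\tau\sigma_i\}$ being a set of representatives for a particular coset structure is required. (Equivalently, the orbit–stabilizer correspondence for a left action matches orbit elements with \emph{left} cosets $\sigma\Stab_H(\bal)$, so the paper's ``right coset representatives'' should be read with the convention flipped; this is harmless once one phrases the permutation argument in terms of the orbit itself, as you can.)
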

\begin{proof}

(1) $P_K(\bal^r)=P_K(\bal)^r$ follows from its definition in terms of the Galois action on $\cG$.

(2) By scaling if necessary, we may assume $\bal\in\cL$.  Choose a torsion-free representative $\al\in\Qbar^\times$.  Then, the result will follow from $N_K^{K(\al)}=\prod_{i=1}^k\sigma_i(\al)$, since $N_K^{K(\al)}(\al)\in K^\times$.  To see this, note that for $\al$ torsion-free, $\al/\sigma(\al)$ is never a nontrivial torsion element, so its orbit in $\Qbar^\times$ and its $\cG$ orbit coincide.  

(3) Again, we may assume $\bal\in\cL$, so that a torsion-free representative $\al\in K^\times$.  Then $N_K^{K(\al)}=\al^k$, and $P_K(\bal)=\bal$ follows.
\end{proof}

\begin{prop}\label{prop:PK-continuous-wrt-Weil-norm}
 Let $\bQ\subset K\subset\Qbar$ be an arbitrary field. Then $P_K$ is a projection onto $V_K$ of norm one with respect to the $L^p$ norms for $1\leq p\leq \infty$.
\end{prop}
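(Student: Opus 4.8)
The plan is to combine Lemma~\ref{lemma:PK-to-VK}, which already exhibits $P_K$ as an idempotent with image $V_K$ restricting to the identity on $V_K$, with two facts recalled above: every $\sigma\in G$ acts as an isometry of $\cG$ in the $h_p$ norm for each $1\le p\le\infty$ (see \cite{FM}), and $h_p$ is a seminorm on the $\bQ$-vector space $\cG$, so it satisfies $h_p(\bal\bbeta)\le h_p(\bal)+h_p(\bbeta)$ and $h_p(\bal^r)=|r|\,h_p(\bal)$. The subadditivity is Minkowski's inequality applied to the local data $(\log\|\bal\|_v)_v$ and $(\log\|\bbeta\|_v)_v$ weighted by $[K_v:\bQ_v]/[K:\bQ]$, using $\log\|\bal\bbeta\|_v=\log\|\bal\|_v+\log\|\bbeta\|_v$; for $p=\infty$ one uses the same additivity together with $\sup_v|x_v+y_v|\le\sup_v|x_v|+\sup_v|y_v|$, so the argument below is uniform in $p$.

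Granting these, the norm bound is immediate. Fix $\bal\in\cG$ and write $P_K(\bal)=\big(\prod_{i=1}^k\sigma_i\bal\big)^{1/k}$ as in the definition. Then, using homogeneity, subadditivity, and that each $\sigma_i$ preserves $h_p$,
\[
 k\,h_p\big(P_K(\bal)\big)=h_p\big(P_K(\bal)^k\big)=h_p\Big(\prod_{i=1}^k\sigma_i\bal\Big)\le\sum_{i=1}^k h_p(\sigma_i\bal)=k\,h_p(\bal),
\]
hence $h_p(P_K(\bal))\le h_p(\bal)$ for all $\bal$, so $P_K$ has operator norm at most $1$ with respect to $h_p$. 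Since $P_K$ is the identity on $V_K$ by Lemma~\ref{lemma:PK-to-VK}(3) and $V_K\supseteq V_\bQ\neq\{\boldsymbol 1\}$, the operator norm is exactly $1$.

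The one point requiring care is that Lemma~\ref{lemma:PK-to-VK} records the homogeneity $P_K(\bal^r)=P_K(\bal)^r$ but not additivity, which is needed to call $P_K$ a linear projection; the integer $k=[KK_{\bal}:K]$ in its definition varies with $\bal$. To handle this I would, given $\bal,\bbeta\in\cG$, choose a finite Galois extension $L/\bQ$ with $\bal,\bbeta\in V_L$; replacing $K$ by $K\cap L$ (which leaves $P_K$ unchanged on $V_L$, as the $\Gal(\Qbar/K)$- and $\Gal(\Qbar/K\cap L)$-orbits of an element of $V_L$ coincide), we may assume $K\subseteq L$ is a number field. On $V_L$ the operator $P_K$ then coincides with the full-group average $\bgamma\mapsto\big(\prod_{\tau\in\Gal(L/K)}\tau\bgamma\big)^{1/[L:K]}$: grouping $\Gal(L/K)$ into cosets of the stabilizer of $\bgamma$ gives $\prod_{\tau}\tau\bgamma=\big(\prod_i\sigma_i\bgamma\big)^{[L:K]/k}$, so the two expressions agree after taking the $[L:K]$-th root. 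This average is visibly $\bQ$-linear and idempotent with image $V_K\cap V_L$, and since every pair of elements of $\cG$ lies in a common such $V_L$, $P_K$ is $\bQ$-linear on $\cG$; one may also run the estimate of the previous paragraph through this average to avoid the $\bal$-dependent index $k$ entirely. I do not expect a genuine obstacle here: this bookkeeping for linearity is the only subtlety, and the norm statement is a direct consequence of $h_p$ being a Galois-invariant seminorm.
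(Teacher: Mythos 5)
Your argument for the norm bound is exactly the paper's: expand $P_K(\bal)$ as $\big(\prod_{i=1}^k\sigma_i\bal\big)^{1/k}$, apply homogeneity and subadditivity of $h_p$, use Galois invariance of $h_p$ to conclude $h_p(P_K\bal)\le h_p(\bal)$, and then note that $P_K$ restricts to the identity on the nontrivial subspace $V_\bQ$ to get equality. The paper simply cites Lemma~\ref{lemma:PK-to-VK} for the idempotence and does not separately address additivity of $P_K$; your observation that the lemma only records homogeneity, together with your fix of passing to a common finite Galois extension $L$ and identifying $P_K|_{V_L}$ with the full average over $\Gal(L/K)$, is a correct and worthwhile tightening of a step the paper leaves implicit.
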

\begin{proof}
 By Lemma \ref{lemma:PK-to-VK}, it follows that $P_K^2=P_K$ is a projection onto $V_K$.  Then for $1\leq p\leq\infty$, 
\[
h_p(P_K\bal)=h_p \left(\sigma_1(\bal)\cdots \sigma_k(\bal)\right)^{1/k}\leq\frac{1}{k}\sum_{i=1}^kh_p(\sigma_i\bal)=\frac{1}{k}\sum_{i=1}^kh_p(\bal)=h_p(\bal),
\]
since the Weil $p$-height is invariant under the Galois action.  This proves $P_K$ has operator norm $\|P_K\|\leq1$, and since $V_\bQ$ is fixed for every $P_K$, we get $\|P_K\|=1$.
\end{proof}

As a corollary, if we let $\cG_p$ denote the completion of $\cG$ under the Weil $p$-norm $h_p$ and extend $P_K$ by continuity, we obtain:

\begin{cor}
 The subspace $\overline{V_K}\subset\cG_p$ is complemented in $\cG_p$ for all $1\leq p\leq\infty$.
\end{cor}
As $\cG_2=L^2(Y,\lambda)$ is the $L^2$ space for a certain measure space $(Y,\lambda)$ constructed explicitly in \cite{AV}, and thus a Hilbert space, more is in fact true:

\begin{prop}\label{prop:PK-is-orthogonal}
 For each $K\in\cK$, $P_K$ is the orthogonal projection onto the subspace $\overline{V_K}\subset \cG_2$.
\end{prop}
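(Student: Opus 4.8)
The plan is to establish that $P_K$ is self-adjoint on the Hilbert space $\cG_2 = L^2(Y,\lambda)$, which together with the already-proven facts that $P_K$ is a norm-one projection onto $\overline{V_K}$ (Proposition \ref{prop:PK-continuous-wrt-Weil-norm} and its corollary) will force it to be \emph{the} orthogonal projection. Indeed, a standard fact from Hilbert space theory is that a bounded idempotent is the orthogonal projection onto its range if and only if it has operator norm one (equivalently, if and only if it is self-adjoint); so once we know $\|P_K\| = 1$ the result is immediate. Alternatively, and perhaps more cleanly given what is available, I would verify directly that $\ker P_K \perp \operatorname{ran} P_K = \overline{V_K}$.

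First I would recall the explicit description of $\cG_2$ from \cite{AV}: the measure space $(Y,\lambda)$ carries a natural measure-preserving action of $G = \Gal(\Qbar/\bQ)$, and the embedding $\cG \hookrightarrow L^2(Y,\lambda)$ is $G$-equivariant, so that each $\sigma \in G$ acts as a unitary (indeed isometric, hence unitary since invertible) operator $U_\sigma$ on $\cG_2$. The subspace $\overline{V_K}$ is precisely the closed subspace of $H = \Gal(\Qbar/K)$-fixed vectors: this uses Lemma \ref{lemma:PK-to-VK}(3) for one inclusion and, for the other, the fact that an $H$-fixed vector in $\cG$ must lie in $V_K$ (its minimal field is contained in $K$ by Lemma \ref{lemma:Stab-min-field}), with the closure statement following by continuity since $P_K$ restricted to $\overline{V_K}$ is the identity.

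Next, with $\sigma_1,\dots,\sigma_k$ the right coset representatives of $\Stab_H(\bal)$ in $H$ as in the definition, the operator $P_K$ on $\cG$ (before taking the $1/k$-th root, i.e. working additively in $L^2$) is the averaging operator $\frac{1}{k}\sum_i U_{\sigma_i}$; but more invariantly, extended to all of $\cG_2$, $P_K$ is exactly the average of $H$ over itself in the appropriate sense — concretely, it agrees on the dense subspace $\cG$ with the conditional expectation onto $H$-invariants. Since each $U_{\sigma}$ is unitary with $U_{\sigma}^* = U_{\sigma^{-1}} = U_{\sigma}^{-1}$, the averaging operator is self-adjoint: $\big(\frac{1}{k}\sum_i U_{\sigma_i}\big)^* = \frac{1}{k}\sum_i U_{\sigma_i^{-1}}$, and since $\{\sigma_i\}$ and $\{\sigma_i^{-1}\}$ range over the same orbit structure this equals $P_K$ again. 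I would take care to phrase this orbit-averaging argument cleanly, either by passing to a genuine finite group quotient through which the action on the relevant vector factors, or by invoking the conditional-expectation description of $P_K$ directly; this bookkeeping with cosets is the one spot requiring attention.

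Finally, a self-adjoint idempotent on a Hilbert space is automatically an orthogonal projection (for $v \in \operatorname{ran} P_K$ and $w \in \ker P_K$, $\langle v, w\rangle = \langle P_K v, w\rangle = \langle v, P_K w\rangle = 0$), and its range is the closed subspace $\overline{V_K}$ by the second paragraph; this completes the proof. The main obstacle, such as it is, is not conceptual but notational: making rigorous the claim that the finite averaging operator $P_K$ — whose definition depends on a choice of coset representatives and on $\bal$ — extends to a well-defined self-adjoint operator on all of $\cG_2$. The safest route is to identify $P_K$ on the dense subspace $\cG$ with the orthogonal projection onto $H$-invariants built from the unitary $G$-action supplied by \cite{AV}, check they agree on $\cG$, and conclude by density.
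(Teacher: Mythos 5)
Your first paragraph already contains the paper's entire proof: $P_K$ is idempotent with operator norm one by Proposition \ref{prop:PK-continuous-wrt-Weil-norm}, and a norm-one idempotent on a Hilbert space is automatically the orthogonal projection onto its range. That is precisely the argument given in the paper, with a citation to Yosida for the Hilbert-space fact; so the short route you mention in passing \emph{is} the intended proof, and you could stop there.

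The bulk of what you actually write out is a genuinely different, more structural route: identify $\overline{V_K}$ with the closed subspace of $\Gal(\Qbar/K)$-fixed vectors, observe that each $\sigma\in G$ acts as a unitary on $\cG_2=L^2(Y,\lambda)$, and realize $P_K$ as the averaging/conditional-expectation operator onto the $H$-invariants, which is self-adjoint, hence orthogonal. This is correct in spirit and arguably more illuminating — it explains \emph{why} $P_K$ is orthogonal rather than inferring it from the abstract norm-one criterion. But it costs more: as you yourself flag, the averaging $\frac{1}{k}\sum_i U_{\sigma_i}$ has $k$ and the coset representatives depending on $\bal$, so it is not literally a fixed linear operator on $\cG_2$, and the self-adjointness claim $\{\sigma_i\}\leftrightarrow\{\sigma_i^{-1}\}$ needs the observation that right coset representatives invert to left coset representatives, which are not the same set in general. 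The clean fix you sketch at the end — show directly that for any $H$-fixed $\bbeta$ one has $\langle P_K\bal,\bbeta\rangle=\frac{1}{k}\sum_i\langle\bal,U_{\sigma_i^{-1}}\bbeta\rangle=\langle\bal,\bbeta\rangle$, so $\bal-P_K\bal\perp V^H$, then extend by density — is the right way to close this, and avoids the coset bookkeeping entirely. In short: your proposal is correct, and it contains both the paper's one-line proof and a more hands-on alternative; if you go with the alternative, execute the inner-product computation rather than the informal $\{\sigma_i\}=\{\sigma_i^{-1}\}$ claim.
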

\begin{proof}
 Observe that $P_K$ is idempotent and has operator norm $\|P_K\|=1$ with respect to the $L^2$ norm, and any such projection in a Hilbert space is orthogonal (see \cite[Theorem III.1.3]{Y}).
\end{proof}

We now explore the relationship between the Galois group and the projection operators $P_K$ for $K\in\cK$.

\begin{lemma}\label{lemma:PKCommWithG}
 For any field $K\subseteq\Qbar$ and $\sigma\in G$, 
 \[
  \sigma P_K = P_{\sigma K}\, \sigma.
 \]
 Equivalently, $P_K\, \sigma = \sigma P_{\sigma^{-1}K}$.
\end{lemma}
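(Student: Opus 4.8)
The plan is to unwind both sides on an arbitrary element $\bal\in\cG$ directly from the definition of the projection, tracking how a choice of coset representatives used to compute $P_K(\bal)$ transforms, under conjugation by $\sigma$, into a valid choice of coset representatives used to compute $P_{\sigma K}(\sigma\bal)$. Recall that if $H=\Gal(\Qbar/K)$ and $\sigma_1,\dots,\sigma_k$ are right coset representatives for $\Stab_H(\bal)=H\cap\Stab_G(\bal)$ in $H$, with $k=[H:\Stab_H(\bal)]$, then $P_K(\bal)=\big(\prod_{i=1}^k\sigma_i(\bal)\big)^{1/k}$; and likewise for $P_{\sigma K}(\sigma\bal)$ with respect to $\Gal(\Qbar/\sigma K)$. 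Since $P_K$ and $P_{\sigma K}$ are well-defined maps, independent of the chosen representatives (as used implicitly in Lemma \ref{lemma:PK-to-VK}), it suffices to check the identity for one convenient choice on each side.

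First I would record two standard conjugation facts. We have $\sigma H\sigma^{-1}=\sigma\Gal(\Qbar/K)\sigma^{-1}=\Gal(\Qbar/\sigma K)$, and $\sigma\Stab_G(\bal)\sigma^{-1}=\Stab_G(\sigma\bal)$, the latter because $\tau(\sigma\bal)=\sigma\bal$ is equivalent to $(\sigma^{-1}\tau\sigma)\bal=\bal$. Intersecting these gives
\[
\Stab_{\sigma H\sigma^{-1}}(\sigma\bal)=\sigma H\sigma^{-1}\cap\Stab_G(\sigma\bal)=\sigma\big(H\cap\Stab_G(\bal)\big)\sigma^{-1}=\sigma\,\Stab_H(\bal)\,\sigma^{-1}.
\]
Now apply the conjugation isomorphism $x\mapsto\sigma x\sigma^{-1}$ to the decomposition $H=\bigsqcup_{i=1}^k\Stab_H(\bal)\,\sigma_i$ into right cosets; it yields $\sigma H\sigma^{-1}=\bigsqcup_{i=1}^k\big(\sigma\Stab_H(\bal)\sigma^{-1}\big)\big(\sigma\sigma_i\sigma^{-1}\big)=\bigsqcup_{i=1}^k\Stab_{\sigma H\sigma^{-1}}(\sigma\bal)\,(\sigma\sigma_i\sigma^{-1})$. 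Hence $\{\sigma\sigma_i\sigma^{-1}\}_{i=1}^k$ is a valid set of right coset representatives for $\Stab_{\Gal(\Qbar/\sigma K)}(\sigma\bal)$ in $\Gal(\Qbar/\sigma K)$, with the same count $k$.

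Using this choice to compute $P_{\sigma K}(\sigma\bal)$ gives
\[
P_{\sigma K}(\sigma\bal)=\Big(\prod_{i=1}^k(\sigma\sigma_i\sigma^{-1})(\sigma\bal)\Big)^{1/k}=\Big(\prod_{i=1}^k\sigma\sigma_i(\bal)\Big)^{1/k}.
\]
Since each $\sigma\in G$ acts on $\cG$ as a $\bQ$-vector space automorphism (written multiplicatively; see \cite{AV}), it commutes with finite products and with extraction of $k$-th roots, so the right-hand side equals $\sigma\big(\prod_{i=1}^k\sigma_i(\bal)\big)^{1/k}=\sigma P_K(\bal)$. As $\bal$ was arbitrary, $\sigma P_K=P_{\sigma K}\,\sigma$, and the equivalent form follows by replacing $K$ with $\sigma^{-1}K$: then $\sigma P_{\sigma^{-1}K}=P_{\sigma(\sigma^{-1}K)}\,\sigma=P_K\,\sigma$. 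I expect the only delicate point to be the bookkeeping with left versus right cosets — verifying that conjugation carries right coset representatives of $\Stab_H(\bal)$ in $H$ to right coset representatives of the conjugated stabilizer in the conjugated group — together with the remark that $\sigma$ genuinely commutes with the $1/k$-power operation on $\cG$; both are routine once the definitions are unwound, and no analytic input is needed.
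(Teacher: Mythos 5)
Your proof is correct and takes essentially the same route as the paper's: insert $\sigma^{-1}\sigma$ and regroup so that $\sigma P_K(\bal)$ is recognized as $P_{\sigma K}(\sigma\bal)$ computed via the conjugated representatives $\sigma\sigma_i\sigma^{-1}$. You go slightly further than the paper by explicitly verifying that $\{\sigma\sigma_i\sigma^{-1}\}$ is a valid set of right coset representatives for $\Stab_{\Gal(\Qbar/\sigma K)}(\sigma\bal)$ in $\Gal(\Qbar/\sigma K)$, a point the paper leaves implicit in its final equality.
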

\begin{proof}
 We prove the first form, the second obviously being equivalent.  Let $H=\Gal(\Qbar/K)$, and note that if $\tau\in H$, then $\sigma\tau\sigma^{-1}\in\Gal(\Qbar/\sigma K)$.  Then by the definition of $P_K$: 
 \begin{align*}
 \sigma P_K \bal & = \sigma\left(\sigma_1\bal\cdots\sigma_k\bal\right)^{1/k}\\
& = \left(\sigma\sigma_1\bal\cdots\sigma\sigma_k\bal\right)^{1/k}\\
& = \left(\sigma\sigma_1(\sigma^{-1}\sigma)\bal\cdots\sigma\sigma_k(\sigma^{-1}\sigma)\bal\right)^{1/k}\\
& = \left((\sigma\sigma_1\sigma^{-1})\sigma\bal\cdots(\sigma\sigma_k\sigma^{-1})\sigma\bal\right)^{1/k}\\
& =P_{\sigma K}(\sigma\bal).\qedhere
\end{align*}
\end{proof}

We will be particularly interested in the case where the projections $P_K,P_L$ commute with each other (and thus $P_K P_L$ is a projection to the intersection of their ranges). To that end, let us determine the intersection of two distinguished subspaces:
\begin{lemma}\label{lemma:intersection-of-VK-VL}
 Let $K,L\subset\Qbar$ be extensions of $\bQ$ of arbitrary degree. Then the intersection $V_K\cap V_L = V_{K\cap L}$.
\end{lemma}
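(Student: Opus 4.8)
The inclusion $V_{K\cap L}\subseteq V_K\cap V_L$ is immediate, since $K\cap L\subseteq K$ and $K\cap L\subseteq L$ give $V_{K\cap L}\subseteq V_K$ and $V_{K\cap L}\subseteq V_L$ directly from the definition of the distinguished subspaces. So the content is the reverse inclusion: if $\bal\in V_K\cap V_L$, then $\bal\in V_{K\cap L}$.

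The plan is to argue via the minimal field $K_{\bal}$ and Lemma~\ref{lemma:Stab-min-field}. If $\bal\in V_K$, then by definition some representative satisfies $\al^n\in K^\times$, so $K_{\bal}=\bQ(\al^n)\subseteq K$; that is, $\bal\in V_K$ is equivalent to $K_{\bal}\subseteq K$. Likewise $\bal\in V_L$ is equivalent to $K_{\bal}\subseteq L$. Hence $\bal\in V_K\cap V_L$ forces $K_{\bal}\subseteq K\cap L$, which is exactly the statement $\bal\in V_{K\cap L}$. Thus the lemma reduces entirely to the characterization ``$\bal\in V_F\iff K_{\bal}\subseteq F$'', which itself follows from the fact (noted in the excerpt after the definition of $V_K$) that $\bal\in V_F$ iff $\al^n\in F^\times$ for some $n$, together with the minimality defining $K_{\bal}$ as the smallest field in $\{F:\bal\in V_F\}$.

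Alternatively, and perhaps more in the spirit of the surrounding projection machinery, one can give the argument using the operators $P_K$. For a general (possibly infinite-degree) field $F$, Lemma~\ref{lemma:PK-to-VK}(3) gives $P_F\bal=\bal$ whenever $\bal\in V_F$, and conversely $P_F\bal\in V_F$ always, so $\bal\in V_F\iff P_F\bal=\bal$. Now suppose $\bal\in V_K\cap V_L$. One checks that $P_K$ and $P_L$ commute on $\bal$ — this is where one uses that for the relevant coset representatives the Galois orbit and the $\cG$-orbit coincide (as in the proof of Lemma~\ref{lemma:PK-to-VK}(2)), so that $P_{K\cap L}=P_K P_L=P_L P_K$ on the orbit in question — and then $P_{K\cap L}\bal=P_K P_L\bal=P_K\bal=\bal$, giving $\bal\in V_{K\cap L}$.

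The only genuine subtlety, and the step I would be most careful about, is the passage between the $\cG$-orbit and the $\Qbar^\times$-orbit of a representative: one must reduce to $\bal\in\cL$ by scaling (using that $\delta$, hence membership in the $V_F$'s and the minimal field, is unchanged under $\bal\mapsto\bal^r$), pick a torsion-free representative $\al$ via Lemma~\ref{lemma:ell-irred}(3), and then work with the honest field $\bQ(\al)$ and honest norm maps. I would therefore present the first argument (via $K_{\bal}$) as the clean proof and relegate the commuting-projections viewpoint to a remark, since all the real work is already packaged into Lemma~\ref{lemma:Stab-min-field} and the definition of the minimal field.
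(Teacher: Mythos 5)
Your first argument is correct and goes through, but it is a genuinely longer route than the paper takes. The paper's proof is a one-liner: if $\bal\in V_K\cap V_L$, pick a representative $\al$ and exponents $m,n\in\bN$ with $\al^m\in K^\times$ and $\al^n\in L^\times$; then $\al^{mn}\in K^\times\cap L^\times=(K\cap L)^\times$, so $\bal\in V_{K\cap L}$. No minimal field and no Lemma~\ref{lemma:Stab-min-field} are needed. Your detour through $K_{\bal}$ does work once you note that ``$\bal\in V_F\iff K_{\bal}\subseteq F$'' holds even for $F$ of infinite degree (because $\bal\in V_F$ gives $\al^m\in F$, hence the finite field $\bQ(\al^m)\subseteq F$ witnesses $K_{\bal}\subseteq\bQ(\al^m)\subseteq F$), but be careful with the sentence ``$K_{\bal}=\bQ(\al^n)\subseteq K$'': for an arbitrary $n$ with $\al^n\in K^\times$ one only has $K_{\bal}\subseteq\bQ(\al^n)$, not equality (equality requires, e.g., that $\al^n$ be torsion-free). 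The inclusion is all you need, so this is a cosmetic slip, but state it as an inclusion.

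Your ``alternatively, via projections'' paragraph should be dropped: it is circular as written. Lemma~\ref{lemma:PKcommute}, which supplies $P_K P_L=P_{K\cap L}=P_L P_K$, is proved in the paper \emph{using} Lemma~\ref{lemma:intersection-of-VK-VL} (the proof there says explicitly ``$P_K(V_L)\subset V_K\cap V_L=V_{K\cap L}$ by the above lemma''). Even your on-the-fly version of the commuting claim ``$P_K P_L\bal=P_{K\cap L}\bal$'' presupposes knowing where $P_K P_L\bal$ lands, which is the content of the intersection lemma. Moreover, Lemma~\ref{lemma:PKcommute} assumes one of the fields is Galois (and finite), whereas the statement you are proving is for arbitrary extensions. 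So keep the first argument, fix the equality-to-inclusion slip, and delete the projection alternative rather than relegating it to a remark.
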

\begin{proof}
 Let $\al^m\in K$ and $\al^n\in L$ for some $m,n\in\bN$. Then $\al^{mn}\in K\cap L$, so $\bal\in V_{K\cap L}$. The reverse inclusion is obvious.
\end{proof}
\begin{lemma}\label{lemma:PKcommute}
 Suppose $K\in\cK$ and $L\in\cK^G$. Then $P_K$ and $P_L$ commute, that is,
 \[
  P_K P_L = P_{K\cap L} = P_L P_K.
 \]
 In particular, the family of operators $\{P_K : K\in\cK^G\}$ is commuting.
\end{lemma}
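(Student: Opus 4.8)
The plan is to reduce the commutation of $P_K$ and $P_L$ to a statement about cosets of subgroups of the absolute Galois group, exploiting that $L\in\cK^G$ makes $\Gal(\Qbar/L)$ normal in $G$. First I would fix $\bal\in\cG$, and reduce to the case $\bal\in\cL$ by scaling, since by Lemma \ref{lemma:PK-to-VK}(1) each $P_K$ commutes with the $\bQ$-vector space action $\bal\mapsto\bal^r$; it therefore suffices to prove $P_KP_L\bal = P_LP_K\bal = P_{K\cap L}\bal$ for Lehmer irreducible $\bal$, where we may pick a torsion-free representative $\al\in\Qbar^\times$ and work with genuine norm maps as in the proof of Lemma \ref{lemma:PK-to-VK}. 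The key structural input is that $H_L := \Gal(\Qbar/L)$ is normal in $G$ because $L$ is Galois, so for any $\sigma\in G$ we have $\sigma H_L = H_L\sigma$, whereas $H_K := \Gal(\Qbar/K)$ need not be normal but still $H_{K\cap L} = H_K H_L = H_L H_K$ (the product is a subgroup precisely because $H_L$ is normal), and $\Gal(\Qbar/K\cap L) = H_K H_L$.

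The main computation is then to expand both $P_KP_L\bal$ and $P_LP_K\bal$ as products of Galois translates of $\bal$ (raised to suitable rational powers) and match them. Writing $P_L\bal = \bigl(\prod_{j} \tau_j\bal\bigr)^{1/s}$ where $\tau_j$ run over coset representatives of $\Stab_{H_L}(\bal)$ in $H_L$, and then applying $P_K$ and using Lemma \ref{lemma:PKCommWithG} together with the fact that $P_K$ acts as an average over $H_K$-cosets, one obtains an expression that is a product of $\sigma\bal$ over a multiset of $\sigma\in G$ determined by double cosets for $H_K$ and $H_L$. Doing the same in the other order gives the same multiset, using normality of $H_L$ to rearrange $H_K H_L = H_L H_K$; and each of these expressions is visibly $P_{K\cap L}\bal$ because $P_{K\cap L}$ is the average over cosets of $\Stab_{H_KH_L}(\bal)$ in $H_KH_L$. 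I would present this by choosing compatible systems of coset representatives — representatives for $H_L$ in $G$, for $\Stab_{H_L}(\bal)$ in $H_L$, and likewise on the $K$ side — so that the triple products telescope cleanly; the bookkeeping is routine once normality is invoked. The final sentence of the lemma, that $\{P_K : K\in\cK^G\}$ is commuting, is then immediate since for $K,L$ both Galois we additionally have $K\cap L$ Galois, so the same argument applies symmetrically with no distinguished role for either field.

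The step I expect to be the main obstacle is purely combinatorial: keeping the coset representatives consistent between the two orders of composition so that the two multisets of Galois elements $\sigma$ (with multiplicities, and with the exponents $1/(\text{sizes})$ matching) are seen to be equal without hand-waving. Concretely, after expanding $P_KP_L\bal$ one must check that every $\sigma\bal$ appearing comes with total exponent $1/[H_KH_L : \Stab_{H_KH_L}(\bal)]$ and that each distinct $\sigma\bal$ in the orbit under $H_KH_L$ appears — this is where one uses that $\Stab_G(\bal) = H_{K_{\bal}}$ by Lemma \ref{lemma:Stab-min-field}, so orbit sizes are computed by index formulas, and that $H_L$ normal forces the two double-coset decompositions to coincide. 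Everything else (the reduction to $\cL$, the passage to norm maps, the identification with $P_{K\cap L}$) is a direct application of results already established above.
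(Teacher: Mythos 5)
Your plan is a genuinely different route from the paper's. The paper's proof is short and operator-theoretic: it shows $P_K(V_L)\subset V_L$ (the only place Galois-ness of $L$ is used, since $\sigma(V_L)=V_L$ for all $\sigma\in G$), combines this with $P_K(\cG)\subset V_K$ and Lemma~\ref{lemma:intersection-of-VK-VL} to get that $P_KP_L$ is an idempotent of operator norm one whose image is $V_{K\cap L}$, and then invokes the Hilbert-space structure of $\cG_2$ (Propositions~\ref{prop:PK-continuous-wrt-Weil-norm} and \ref{prop:PK-is-orthogonal}): a norm-one idempotent on a Hilbert space is the orthogonal projection onto its range, so $P_KP_L=P_{K\cap L}$, and then $P_LP_K=(P_KP_L)^{*}=P_{K\cap L}^{*}=P_{K\cap L}$ by self-adjointness. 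Your proposal uses none of this; it is a purely combinatorial computation with coset representatives. Both are legitimate in principle, but the functional-analytic argument is what makes the paper's proof a few lines.

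The gap in your sketch is precisely the step you flag as ``routine bookkeeping,'' and it is not routine. Two concrete issues. First, when you apply $P_K$ to $P_L\bal$, the coset representatives are taken modulo $\Stab_{H_K}(P_L\bal)=H_K\cap H_LN$ (with $N=\Stab_G(\bal)$), which is generally \emph{larger} than $\Stab_{H_K}(\bal)=H_K\cap N$; consequently the literal multisets of Galois elements appearing in $P_KP_L\bal$ and in $P_LP_K\bal$ need not coincide, and the correct claim is only that the total \emph{weight} carried by each orbit element $\sigma\bal$ (after collapsing modulo $N$) comes out to $1/[H_KH_L:H_KH_L\cap N]$ in both cases. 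You do state this weight condition at the end, but proving it requires tracking how orbit sizes $|H_L\,\sigma\bal|$ and $|H_K\,\tau\bal|$ vary with $\sigma,\tau$ --- the former are constant because $H_L$ is normal (so conjugation by any $\sigma\in G$ preserves $|H_L\cap\sigma N\sigma^{-1}|$), but the latter are \emph{not} constant because $H_K$ need not be normal. Second, and for the same reason, the expansion of $P_LP_K\bal$ is not symmetric to that of $P_KP_L\bal$: you need to identify $\Stab_G(P_K\bal)$, which is not simply $H_KN$ (that product need not even be a subgroup), so the ``same argument with $K$ and $L$ swapped'' does not apply. The paper sidesteps this asymmetry entirely with the adjoint trick. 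Your approach can likely be completed, but as written it omits exactly the non-formal content of the lemma, and the $P_LP_K$ direction in particular needs a separate idea (or a reduction to the $P_KP_L$ case via adjoints, which brings you back to the Hilbert-space argument).
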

\begin{proof}
 It suffices to prove $P_K(V_L)\subset V_L$, as this will imply that $P_K(V_L)\subset V_K\cap V_L=V_{K\cap L}$ by the above lemma, and thus that $P_K P_L$ is itself a projection onto $V_{K\cap L}$, implying $P_K P_L=P_{K\cap L}$, and since any orthogonal projection is equal to its adjoint, we find that $P_{K\cap L} = P_L P_K$ as well. To prove that $P_K(V_L)\subset V_L$, observe that for $\bal\in V_L$,
 \[
  P_K(\bal) = \left(\sigma_1\bal\cdots \sigma_k\bal\right)^{1/k}
 \]
 where the $\sigma_i$ are right coset representatives of $\Stab_H(\bal)$ in $H=\Gal(\Qbar/K)$. However, $\sigma(V_L)=V_L$ for $\sigma\in G$ since $L$ is Galois, and thus, $P_K(\bal)\in V_L$ as well. But $P_K(\bal)\in V_K$ by construction and the proof is complete.
\end{proof}

From these facts, we derive the following useful lemma:
\begin{lemma}\label{lemma:delta-PK-leq-for-K-in-Kg}
 If $K\in\cK^G$, then $\delta(P_K\bal)\leq \delta(\bal)$ for all $\bal\in\cG$.
\end{lemma}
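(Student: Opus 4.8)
The plan is to relate $\delta(P_K\bal)$ to the Galois orbit size of $P_K\bal$ and exploit the fact that $K$ being Galois means $P_K$ intertwines nicely with the $G$-action. Recall that $\delta(\bbeta) = \#\{\sigma\bbeta : \sigma\in G\} = [G:\Stab_G(\bbeta)]$ for any $\bbeta\in\cG$. So I want to show that the orbit of $P_K\bal$ under $G$ is no larger than the orbit of $\bal$, which amounts to showing $\Stab_G(\bal)\subseteq \Stab_G(P_K\bal)$.

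First I would take $\sigma\in\Stab_G(\bal)$, so $\sigma\bal = \bal$. I then apply Lemma \ref{lemma:PKCommWithG}: since $K\in\cK^G$ we have $\sigma K = K$, hence $\sigma P_K = P_{\sigma K}\,\sigma = P_K\,\sigma$, i.e. $P_K$ commutes with every $\sigma\in G$. Therefore $\sigma(P_K\bal) = P_K(\sigma\bal) = P_K(\bal)$, which shows $\sigma\in\Stab_G(P_K\bal)$. This gives the containment $\Stab_G(\bal)\subseteq\Stab_G(P_K\bal)$, and hence
\[
 \delta(P_K\bal) = [G:\Stab_G(P_K\bal)] \leq [G:\Stab_G(\bal)] = \delta(\bal),
\]
using the description of $\delta$ from \eqref{eqn:delta-defn} (which is valid because $P_K\bal\in V_K\subseteq\cG$ and the formula holds for all elements of $\cG$ via Lemma \ref{lemma:Stab-min-field}).

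I do not anticipate a serious obstacle here; the only subtlety is making sure that the $G$-action on $\cG$ genuinely commutes with the projection $P_K$ when $K$ is Galois, and that is exactly the content of Lemma \ref{lemma:PKCommWithG} combined with $\sigma K = K$. One should be slightly careful that $P_K$ is a priori only defined as a map on $\cG$ (not just on $V_K$), but that is how it was introduced, so the computation $\sigma(P_K\bal) = P_K(\sigma\bal)$ is literally Lemma \ref{lemma:PKCommWithG} evaluated at $\bal$. An alternative, equivalent route would be to argue directly with minimal fields: $P_K\bal\in V_{K_{\bal}\cap K}$-type reasoning shows $K_{P_K\bal}\subseteq$ the fixed field of $\Stab_G(\bal)$, giving $[K_{P_K\bal}:\bQ]\leq[K_{\bal}:\bQ]$; but the stabilizer computation above is cleaner and I would present that.
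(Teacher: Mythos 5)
Your proof is correct, but it takes a genuinely different route from the paper's. You invoke Lemma \ref{lemma:PKCommWithG} together with $\sigma K = K$ to get that $P_K$ commutes with every $\sigma\in G$, and then show the inclusion of stabilizers $\Stab_G(\bal)\subseteq\Stab_G(P_K\bal)$, from which the inequality $\delta(P_K\bal)\leq\delta(\bal)$ follows immediately via \eqref{eqn:delta-defn}. The paper instead sets $F=K_{\bal}$, uses Lemma \ref{lemma:PKcommute} to rewrite $P_K\bal = P_K(P_F\bal) = P_{K\cap F}\bal$, deduces $P_K\bal\in V_{K\cap F}$, and then bounds $\delta(P_K\bal)\leq [K\cap F:\bQ]\leq [F:\bQ]=\delta(\bal)$. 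These are essentially dual arguments under the Galois correspondence (by Lemma \ref{lemma:Stab-min-field}, containment of stabilizers is the same as reverse containment of minimal fields), and the paper's version carries slightly more information, namely that the minimal field of $P_K\bal$ sits inside $K\cap K_{\bal}$. Your version is arguably more direct: it avoids Lemma \ref{lemma:PKcommute} entirely, using only the intertwining relation of Lemma \ref{lemma:PKCommWithG} plus the orbit-stabilizer description of $\delta$. Both are sound; the paper's choice is natural given that it just proved Lemma \ref{lemma:PKcommute}, while yours would be a cleaner standalone derivation.
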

\begin{proof}
 Let $F= K_{\bal}$. Since $K\in\cK^G$, we have by Lemma \ref{lemma:PKcommute} that $P_K \bal = P_K(P_F \bal) = P_{K\cap F} \bal$. Thus, $P_K \bal \in V_{K\cap F}$, and so $\delta(P_K\bal)\leq [K\cap F:\bQ]\leq [F:\bQ]=\delta(\bal)$.
\end{proof}

\section{The extremal norms}\label{sect:construction}
\subsection{Construction}
The aim of this section is to construct norms extremal with respect to the minimal Mahler measure. Let us begin by recalling the metric construction, as applied in \cite{DS}:
\begin{defn}
For $f:\cG\ra[0,\infty)$, the \emph{metric height associated to $f$} is defined to be the function $\widehat{f}:\cG\ra [0,\infty)$ given by
\[
 \widehat{f}(\bal) = \inf_{\bal=\bal_1\cdots\bal_n} \sum_{i=1}^n f(\bal_i), 
\]
where the infimum ranges over all possible factorizations $\bal=\bal_1 \cdots\bal_n$ in $\cG$.
\end{defn}
\begin{prop}\label{prop:metric-height-prop}
 Suppose $f(\bal^{-1})=f(\bal)$ for all $\bal\in\cG$.  Then the function $\widehat{f}$ satisfies:
\begin{enumerate}
\item $\widehat{f}(\bal)\leq f(\bal)$ for all $\bal\in\cG$.
\item $\widehat{f}(\bal^{-1})=\widehat{f}(\bal)$ for all $\bal\in\cG$.
\item $\widehat{f}(\bal\bbeta^{-1})\leq \widehat{f}(\bal)+\widehat{f}(\bbeta)$ for all $\bal,\bbeta\in\cG$.
\item The zero set $Z(\widehat{f})=\{\bal\in\cG : \widehat{f}(\bal)=0\}$ is a subgroup of $\cG$, and $\widehat{f}$ is a metric on $\cG/Z(\widehat{f})$.
\end{enumerate}
It is the largest function that does so, that is, for any other function $g$ which satisfies the above conditions, we have $g(\bal)\leq \widehat{f}(\bal)$ for all $\bal\in\cG$. In particular, if $f$ already satisfies the triangle inequality, then $\widehat{f}=f$.
\end{prop}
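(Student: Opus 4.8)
The plan is to verify conditions (1)--(4) directly from the definition of $\widehat{f}$ as an infimum over factorizations, using the symmetry hypothesis $f(\bal^{-1})=f(\bal)$ as essentially the only input, and then to deduce the extremal (``largest'') property by a short induction. No step is deep: the content is bookkeeping with inverses and the identity element. The observation I would isolate first, since it gets used repeatedly, is that any $g\colon\cG\to[0,\infty)$ satisfying $g(\bbeta^{-1})=g(\bbeta)$ and $g(\bal\bbeta^{-1})\le g(\bal)+g(\bbeta)$ is automatically subadditive for products, because $g(\bal\bbeta)=g\big(\bal(\bbeta^{-1})^{-1}\big)\le g(\bal)+g(\bbeta^{-1})=g(\bal)+g(\bbeta)$, and hence $g(\bal_1\cdots\bal_n)\le\sum_i g(\bal_i)$ by induction. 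This will be applied to $g=\widehat{f}$ (once (2) and (3) are known for it) and, later, to an arbitrary competitor $g$.

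Conditions (1)--(3) then follow quickly. For (1), the length-one factorization $\bal=\bal$ gives $\widehat{f}(\bal)\le f(\bal)$. For (2), a factorization $\bal=\bal_1\cdots\bal_n$ produces the factorization $\bal^{-1}=\bal_1^{-1}\cdots\bal_n^{-1}$ with equal $f$-cost by symmetry, so $\widehat{f}(\bal^{-1})\le\widehat{f}(\bal)$, and equality follows by running the same argument for $\bal^{-1}$. For (3), fix $\ep>0$, choose factorizations of $\bal$ and of $\bbeta$ whose $f$-costs lie within $\ep$ of $\widehat{f}(\bal)$ and $\widehat{f}(\bbeta)$, concatenate the first with the termwise inverse of the second to obtain a factorization of $\bal\bbeta^{-1}$ of cost at most $\widehat{f}(\bal)+\widehat{f}(\bbeta)+2\ep$, and let $\ep\to0$.

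For (4): with (2) and (3) in hand, $\widehat{f}$ is product-subadditive by the observation above, so $Z(\widehat{f})$ is closed under products; it is closed under inverses by (2); and it contains $\boldsymbol 1$, where one invokes either that the relevant $f$ vanishes at the identity (true for each $f=m_p$ to which the proposition is applied) or the convention that the empty product is an allowed factorization of $\boldsymbol 1$, so that $\widehat{f}(\boldsymbol 1)=0$. Hence $Z(\widehat{f})$ is a subgroup. For $\bgamma\in Z(\widehat{f})$, product-subadditivity gives $\widehat{f}(\bal\bgamma)\le\widehat{f}(\bal)$ and, applied to $\bal\bgamma$ and $\bgamma^{-1}$, the reverse inequality; so $\widehat{f}$ is constant on cosets of $Z(\widehat{f})$, and since $\cG$ is abelian the induced function $d([\bal],[\bbeta]):=\widehat{f}(\bal\bbeta^{-1})$ on $\cG/Z(\widehat{f})$ is well defined. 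It is symmetric by (2); it satisfies the triangle inequality upon writing $\bal\bgamma^{-1}=(\bal\bbeta^{-1})(\bgamma\bbeta^{-1})^{-1}$ and applying (3); and it vanishes precisely when $\bal\bbeta^{-1}\in Z(\widehat{f})$, that is, only on the diagonal. Hence $d$ is a metric.

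Finally, for the extremal property, let $g$ satisfy conditions (1)--(3). By the observation, $g(\bal_1\cdots\bal_n)\le\sum_i g(\bal_i)\le\sum_i f(\bal_i)$ for every factorization $\bal=\bal_1\cdots\bal_n$, so taking the infimum over factorizations yields $g(\bal)\le\widehat{f}(\bal)$ for all $\bal$. The last assertion is then immediate: if $f$ itself satisfies the triangle inequality, then $f$ is an admissible $g$ (it satisfies (1) trivially, (2) by hypothesis, (3) by assumption), so $f\le\widehat{f}$, and with (1) this forces $\widehat{f}=f$. The one place where I would expect to spend care rather than ingenuity is (4): confirming $\widehat{f}(\boldsymbol 1)=0$, the descent of $\widehat{f}$ to the quotient (which quietly uses that $\cG$ is abelian), and matching each metric axiom to the right one among (2), (3), and the definition of $Z(\widehat{f})$.
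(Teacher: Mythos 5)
Your proof is correct and complete; the paper states Proposition~\ref{prop:metric-height-prop} without proof (and likewise dismisses the parallel Proposition~\ref{prop:norm-height-properties} as following ``easily from the definitions''), so yours supplies exactly the standard argument the authors had in mind. The one point worth emphasizing is your treatment of $\widehat{f}(\boldsymbol 1)=0$: you are right that this is not formally a consequence of the stated hypothesis $f(\bal^{-1})=f(\bal)$ alone, since otherwise $\widehat{f}(\boldsymbol 1)>0$ would force $\widehat{f}\ge\widehat{f}(\boldsymbol 1)/2>0$ everywhere by (3) and hence $Z(\widehat f)=\emptyset$, which is not a subgroup; the empty-product convention (or the fact that $f=m_p$ vanishes at $\boldsymbol 1$) is genuinely needed for (4), and you were right to flag it explicitly rather than sweep it under the rug. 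The rest of your argument — the ``product-subadditivity'' observation, the $\ep$-approximate factorizations for (3), the coset-invariance and metric axioms on $\cG/Z(\widehat f)$ using abelian-ness, and the inductive bound $g(\bal_1\cdots\bal_n)\le\sum_i f(\bal_i)$ for the extremal property — is exactly the intended bookkeeping and is carried out correctly.
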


This last property of being the largest metric less than or equal to $f$ we call the \emph{extremal property}. The construction of metric heights only uses the group structure of $\cG$, and ignores the vector space structure. If we wish to respect scaling in $\cG$ as well, we arrive at the notion of a norm height:
\begin{defn}\label{defn:norm-height}
Let $f:\cG\ra[0,\infty)$ be a given function. We define the \emph{norm height associated to $f$} to be the function $\widetilde{f}:\cG\ra [0,\infty)$ given by
\[
 \widetilde{f}(\bal) = \inf_{\bal=\bal_1^{r_1}\cdots\bal_n^{r_n}} \sum_{i=1}^n |r_i|\,f(\bal_i),
\]
where the infimum ranges over all possible factorizations $\bal=\bal_1^{r_1}\cdots\bal_n^{r_n}$ in $\cG$ with $\bal_i\in\cG$ and $r_i\in\bQ$.
\end{defn}
\begin{prop}\label{prop:norm-height-properties}
The norm height $\widetilde{f}$ satisfies the properties:
\begin{enumerate}
 \item $\widetilde{f}(\bal)\leq f(\bal)$ for all $\bal\in\cG$.
 \item $\widetilde{f}(\bal^r) = |r|\widetilde f(\bal)$ for all $r\in\bQ$ and $\bal\in\cG$.
 \item $\widetilde{f}(\bal\bbeta^{-1}) \leq \widetilde{f}(\bal)+\widetilde{f}(\bbeta)$ for all $\bal,\bbeta\in\cG$.
 \item The zero set $Z(\widetilde{f})=\{\bal\in\cG : \widetilde{f}(\bal)=0\}$ is a vector subspace of $\cG$.
\end{enumerate}
Thus, $\widetilde{f}$ is a seminorm on $\cG$, and a norm on $\cG/Z(\widetilde{f})$. It is the largest function on $\cG$ that satisfies the above properties, that is, for any other function $g$ which satisfies the above conditions, we have $g(\bal)\leq \widetilde{f}(\bal)$ for all $\bal\in\cG$. In particular, if $f$ is already a seminorm on $\cG$, then $\widetilde{f}=f$. 
\end{prop}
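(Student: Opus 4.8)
The plan is to verify conditions (1)--(4) straight from Definition~\ref{defn:norm-height} and then establish the maximality statement, following the template of Proposition~\ref{prop:metric-height-prop} but carrying the rational exponents along. Property (1) is immediate from the trivial factorization $\bal=\bal^1$; this also shows the infimum is over a nonempty set bounded below by $0$, so $\widetilde f(\bal)\in[0,f(\bal)]$ is well defined. For homogeneity (2) with $r\neq 0$: given a factorization $\bal=\bal_1^{r_1}\cdots\bal_n^{r_n}$, raising to the $r$th power gives $\bal^r=\bal_1^{rr_1}\cdots\bal_n^{rr_n}$, so $\widetilde f(\bal^r)\le\sum_i|rr_i|f(\bal_i)=|r|\sum_i|r_i|f(\bal_i)$; taking the infimum yields $\widetilde f(\bal^r)\le|r|\widetilde f(\bal)$, and applying this to $\bal^r$ with exponent $1/r$ gives the reverse inequality. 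The case $r=0$ reduces to $\widetilde f(\boldsymbol 1)=0$, which follows directly from the definition via the factorizations $\boldsymbol 1=\bal^{1/n}\bal^{-1/n}$, giving $\widetilde f(\boldsymbol 1)\le\tfrac 2n f(\bal)\to 0$. For (3) I would concatenate a factorization of $\bal$ with the exponent-negated factorization of $\bbeta$ to represent $\bal\bbeta^{-1}$, then pass to infima over both.

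Next, for (4): $\boldsymbol 1\in Z(\widetilde f)$ by the above; $Z(\widetilde f)$ is closed under inversion by (2) with $r=-1$; it is closed under multiplication since $\widetilde f(\bal\bbeta)=\widetilde f(\bal(\bbeta^{-1})^{-1})\le\widetilde f(\bal)+\widetilde f(\bbeta^{-1})$ by (3); and it is closed under $\bQ$-scaling by (2). Hence $Z(\widetilde f)$ is a vector subspace. That $\widetilde f$ is a seminorm then follows: nonnegativity and absolute homogeneity are (2), and the symmetric triangle inequality $\widetilde f(\bal\bbeta)\le\widetilde f(\bal)+\widetilde f(\bbeta)$ follows from (3) together with $\widetilde f(\bbeta^{-1})=\widetilde f(\bbeta)$; it therefore descends to a genuine norm on $\cG/Z(\widetilde f)$.

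For the extremal property, let $g$ be any function with $g\le f$, $g(\bal\bbeta^{-1})\le g(\bal)+g(\bbeta)$, and $g(\bal^r)=|r|g(\bal)$. First I would note that these hypotheses force $g(\bbeta^{-1})=g(\bbeta)$ and hence the additive triangle inequality $g(\bal\bbeta)\le g(\bal)+g(\bbeta)$, which iterates to $g(\bal_1\cdots\bal_n)\le\sum_i g(\bal_i)$. Then for any factorization $\bal=\bal_1^{r_1}\cdots\bal_n^{r_n}$,
\[
 g(\bal)\le\sum_{i=1}^n g(\bal_i^{r_i})=\sum_{i=1}^n|r_i|\,g(\bal_i)\le\sum_{i=1}^n|r_i|\,f(\bal_i),
\]
and taking the infimum over all factorizations gives $g\le\widetilde f$. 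Since conditions (1)--(3) were just shown to hold for $\widetilde f$ itself, it is the largest such function; and if $f$ is already a seminorm it satisfies (1)--(3), so $f\le\widetilde f$, which combined with (1) gives $\widetilde f=f$.

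I do not expect a serious obstacle: the argument is elementary and runs parallel to the metric case. The only points requiring a little care are the well-definedness of the infimum, the fact that $\widetilde f(\boldsymbol 1)=0$ (which genuinely uses the scaling freedom, since $f(\boldsymbol 1)$ itself need not vanish), and the step of upgrading the asymmetric hypotheses on $g$ to the symmetric, additive triangle inequality before iterating it over a factorization.
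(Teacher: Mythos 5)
Your proof is correct, and since the paper simply remarks that the proposition ``follows easily from the definitions'' without writing out an argument, your verification is exactly the kind of routine check the authors had in mind. All the steps are sound: the trivial factorization for (1), the two-sided argument via $r$ and $1/r$ for homogeneity, concatenation for the triangle inequality, the upgrade of the asymmetric hypotheses on $g$ to the iterated additive triangle inequality for the extremal property, and the final observation that a seminorm $f$ satisfies its own defining conditions so $\widetilde f=f$.
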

The proof of Proposition \ref{prop:norm-height-properties} follows easily from the definitions.  As above, we refer to the last part of the proposition as the \emph{extremal property} of the norm height construction. Observe that if $f$ satisfies the scaling property $f(\bal^r) = |r|f(\bal)$, then $\widetilde f=\widehat f$ and the construction is the same. 
\begin{prop}
 $\widetilde m_p$ is a vector space norm on $\cG$.
\end{prop}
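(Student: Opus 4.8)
The plan is to deduce this from Proposition~\ref{prop:norm-height-properties} together with one elementary lower bound, so that no direct analysis of the infimum is needed. First I would observe that for each $1\le p\le\infty$ the function $m_p$ is finite-valued and satisfies $m_p(\bal^{-1})=m_p(\bal)$, since both $d$ and $h_p$ are invariant under inversion. Hence Proposition~\ref{prop:norm-height-properties}, applied with $f=m_p$, already gives that $\widetilde m_p$ is finite (because $\widetilde m_p\le m_p$), nonnegative, absolutely homogeneous with respect to $\bQ$-scaling, and subadditive in the sense of property (3) of that proposition; in other words $\widetilde m_p$ is a seminorm on $\cG$. So the only remaining point is positive definiteness: that $\widetilde m_p(\bal)=0$ forces $\bal=\boldsymbol 1$.

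For that I would use the fact that $m_p$ dominates the ordinary $L^p$ Weil height on $\cG$: since $d(\bbeta)\ge 1$ for every $\bbeta\in\cG$, we have $m_p(\bbeta)=d(\bbeta)\,h_p(\bbeta)\ge h_p(\bbeta)$. Given any factorization $\bal=\bal_1^{r_1}\cdots\bal_n^{r_n}$ of the type appearing in Definition~\ref{defn:norm-height}, this domination, combined with the homogeneity $h_p(\bal_i^{r_i})=|r_i|\,h_p(\bal_i)$ and the triangle inequality for $h_p$ (which is itself a vector space norm on $\cG$; see \cite{AV}), yields
\[
 \sum_{i=1}^n |r_i|\,m_p(\bal_i)\ \ge\ \sum_{i=1}^n |r_i|\,h_p(\bal_i)\ =\ \sum_{i=1}^n h_p(\bal_i^{r_i})\ \ge\ h_p\!\Big(\prod_{i=1}^n \bal_i^{r_i}\Big)\ =\ h_p(\bal).
\]
Taking the infimum over all such factorizations gives $\widetilde m_p(\bal)\ge h_p(\bal)$ for every $\bal\in\cG$.

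Finally, since $h_p$ vanishes precisely on $\Tor(\Qbar^\times)$, i.e.\ exactly on the identity element of $\cG$, the inequality $\widetilde m_p\ge h_p$ immediately forces $Z(\widetilde m_p)=\{\boldsymbol 1\}$. Together with the seminorm properties above, this shows $\widetilde m_p$ is a genuine vector space norm, not merely a seminorm. I do not anticipate any real obstacle: the entire content is the observation that the factor $d(\bal_i)\ge 1$ lets one bound the infimum below by $h_p(\bal)$, and everything else is a routine application of Proposition~\ref{prop:norm-height-properties} and the known fact that $h_p$ is already a norm on $\cG$.
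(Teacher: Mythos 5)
Your proposal is correct and takes essentially the same approach as the paper: both use $d\ge 1$ to get $h_p\le m_p$ and then conclude $h_p\le \widetilde m_p$ (the paper cites the extremal property of Proposition~\ref{prop:norm-height-properties} directly, while you unfold it by bounding each factorization sum below via the triangle inequality for $h_p$), after which positive definiteness follows from that of $h_p$.
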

\begin{proof}
It only remains to show that the $\widetilde m_p$ vanishes precisely on the zero subspace of the vector space $\cG$, which is $\{\boldsymbol{1}\}$. Observe that $h_p \leq m_p$, and therefore, by the extremal property,
\[
 h_p(\bal) \leq \widetilde m_p(\bal)\quad\text{for all}\quad\bal\in\cG.
\]
In particular, we see that $\widetilde m_p(\bal)=0$ if and only if $h_p(\bal)=0$, which occurs precisely when $\bal=\boldsymbol{1}$.
\end{proof}
The following result for $\widetilde m_p$ is very useful and will be tacitly used several times in our proofs below:
\begin{prop}
 The norm $\widetilde m_p$ extremal with respect to the Mahler measure $\mbr_p$ is precisely $\widehat{\delta h_p}$, that is, $\widetilde m_p(\bal)=\widehat{\delta h_p}(\bal)$ for all $\bal\in\cG$.
\end{prop}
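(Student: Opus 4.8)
The plan is to show the two infima defining $\widetilde m_p$ and $\widehat{\delta h_p}$ coincide by proving each construction is bounded above by the other, using the extremal (largest-function) characterizations already established. First I would recall that $\widehat{\delta h_p}$ is the metric height associated to $\delta h_p$, and that $\delta h_p$ satisfies the scaling identity $\delta h_p(\bal^r) = |r|\,\delta h_p(\bal)$ for all $r \in \bQ$: indeed $\delta(\bal^r) = \delta(\bal)$ for $r \neq 0$ by the scaling-invariance of $\delta$ noted after \eqref{eqn:delta-defn}, while $h_p(\bal^r) = |r| h_p(\bal)$ since $h_p$ is a norm. Hence by the remark following Proposition \ref{prop:norm-height-properties}, the norm height and the metric height associated to $\delta h_p$ agree: $\widetilde{\delta h_p} = \widehat{\delta h_p}$. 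So it suffices to prove $\widetilde m_p = \widetilde{\delta h_p}$.

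For the inequality $\widetilde{\delta h_p} \leq \widetilde m_p$: since $\delta(\bal) \le d(\bal)$ for all $\bal \in \cG$ (every coset representative has degree divisible by $\delta(\bal)$ by Lemma \ref{lemma:ell-irred}(2), so the minimal degree $d(\bal)$ is a multiple of $\delta(\bal)$, in particular $\ge \delta(\bal)$), we get $\delta h_p(\bal) \le d(\bal) h_p(\bal) = m_p(\bal)$ pointwise. Applying the norm-height construction, which is monotone in the input function, yields $\widetilde{\delta h_p} \le \widetilde{m_p}$. Equivalently: any factorization $\bal = \bal_1^{r_1}\cdots\bal_n^{r_n}$ gives $\sum |r_i| \delta h_p(\bal_i) \le \sum |r_i| m_p(\bal_i)$, and taking infima over all such factorizations gives the claim.

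For the reverse inequality $\widetilde m_p \le \widetilde{\delta h_p} = \widehat{\delta h_p}$: I would verify that $\widetilde m_p$ itself satisfies the three defining properties of the extremal construction relative to the function $\delta h_p$, namely $\widetilde m_p(\bal) \le \delta h_p(\bal)$, subadditivity, and $\bQ$-homogeneity. Subadditivity and homogeneity hold because $\widetilde m_p$ is already a seminorm (the proposition just proved). The only substantive point is $\widetilde m_p(\bal) \le \delta h_p(\bal)$ for every $\bal$. To see this, fix $\bal$ and let $\ell = \ell(\bal)$ be the minimal exponent with $\bal^\ell \in \cL$, as in Lemma \ref{lemma:ell-irred}(1); then $d(\bal^\ell) = \delta(\bal^\ell) = \delta(\bal)$, so $m_p(\bal^\ell) = \delta(\bal) h_p(\bal^\ell) = \ell\,\delta(\bal) h_p(\bal) = \ell\,\delta h_p(\bal)$. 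Using the single-term factorization $\bal = (\bal^\ell)^{1/\ell}$ in Definition \ref{defn:norm-height} gives $\widetilde m_p(\bal) \le \tfrac{1}{\ell} m_p(\bal^\ell) = \delta h_p(\bal)$. Then by the extremal property of $\widehat{\delta h_p}$ stated in Proposition \ref{prop:metric-height-prop} (with $f = \delta h_p$, which is symmetric under inversion), any function with these three properties is $\le \widehat{\delta h_p}$, so $\widetilde m_p \le \widehat{\delta h_p}$. Combining the two inequalities gives $\widetilde m_p = \widehat{\delta h_p}$.

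The main obstacle — really the only non-formal step — is the pointwise bound $\widetilde m_p(\bal) \le \delta h_p(\bal)$, which is where the identity $d(\bal^{\ell(\bal)}) = \delta(\bal)$ (equivalently $d = \ell \delta$, attributed to \cite{FM}) does the work: it is what lets a non-Lehmer-irreducible $\bal$ be "unwound" into a root of a Lehmer-irreducible element whose minimal Mahler measure is exactly $\ell\,\delta h_p(\bal)$, so that the homogeneity built into the $\widetilde{\ }$ construction recovers $\delta h_p$ rather than the strictly larger $m_p$. Everything else is bookkeeping with the extremal characterizations.
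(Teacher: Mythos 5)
Your proof is correct and rests on the same key fact the paper uses: the identity $d(\bal^{\ell(\bal)}) = \delta(\bal^{\ell(\bal)}) = \delta(\bal)$ makes the rescaled single-term factorization $\bal = (\bal^{\ell})^{1/\ell}$ realize the value $\delta(\bal)h_p(\bal)$, which is what the whole argument turns on. Where the paper argues in one pass that the two infima coincide (replacing each factor by its optimal scaling), you repackage the same content as a pair of inequalities deduced from the extremal characterizations of $\widehat{\phantom{x}}$ and $\widetilde{\phantom{x}}$ together with the observation $\delta \leq d$; this is a more modular presentation but not a genuinely different route.
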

\begin{proof}
By Lemma \ref{lemma:ell-irred}, there is a unique minimal $\ell\in\bN$ such that
 \[
 d(\bal^\ell) = \delta(\bal).
 \]
  Then it is easy to see that for $\bal\in\cG$, the expression
 \[
  |s/r|\,\mbr_p(\bal^{r/s}) = |s/r|\,d(\bal^{r/s}) h_p(\bal^{r/s})
 \]
 is minimized for $r/s = \ell$, and for that value,
 \[
  |1/\ell|\,d(\bal^{\ell}) h_p(\bal^{\ell}) = \delta(\bal) h_p(\bal).
 \]
 We may then conclude that
 \[
  \widetilde m_p(\bal) = \inf_{\bal=\bal_1^{r_1}\cdots\bal_n^{r_n}} \sum_{i=1}^n |r_i|\,\mbr_p(\bal_i)
  = \inf_{\bal=\bal_1\cdots\bal_n} \sum_{i=1}^n \delta(\bal_i) h_p(\bal_i) = \widehat{\delta h_p}(\bal)
 \]
 which is the desired result.
\end{proof}

\subsection{Explicit values}
We will now compute the values of the norms $\widetilde m_p$ on certain classes of algebraic numbers.

Recall that a \emph{surd} is an algebraic number $\al\in\Qbar^\times$ such that $\al^n\in\bQ^\times$ for some $n\in\bN$. Call $\bal\in\cG$ a surd if one (and therefore all) coset representatives of $\bal$ are surds.
\begin{prop}
 If $\bal\in\cG$ is a surd, then $\widetilde m_p(\bal) = h_p(\bal)$.
\end{prop}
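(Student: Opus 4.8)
The plan is to squeeze $\widetilde m_p(\bal)$ between $h_p(\bal)$ on both sides. The inequality $h_p(\bal)\le\widetilde m_p(\bal)$ is already in hand and does not use the surd hypothesis: since $d(\bbeta)\ge 1$ for every $\bbeta\in\cG$ we have $h_p\le m_p$, and $h_p$ is a seminorm on $\cG$, so the extremal property of $\widetilde m_p$ forces $h_p\le\widetilde m_p$ everywhere on $\cG$ (this is exactly the estimate used in the proof that $\widetilde m_p$ is a norm).

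For the reverse inequality the key observation is that a nontrivial surd class has trivial $\delta$. Indeed, if $\boldsymbol 1\ne\bal\in\cG$ is a surd then $\al^n\in\bQ^\times$ for some $n\in\bN$, so $\bal\in V_\bQ$; by the definition of the minimal field this gives $K_{\bal}=\bQ$, hence $\delta(\bal)=[K_{\bal}:\bQ]=1$ by \eqref{eqn:delta-defn}. I would then invoke the identity $\widetilde m_p=\widehat{\delta h_p}$ proved above and bound $\widehat{\delta h_p}(\bal)$ using the trivial one-term factorization $\bal=\bal$ --- equivalently, part (1) of Proposition \ref{prop:metric-height-prop} applied to $f=\delta h_p$ --- to get
\[
 \widetilde m_p(\bal)=\widehat{\delta h_p}(\bal)\le \delta(\bal)\,h_p(\bal)=h_p(\bal).
\]
Together with the lower bound this yields $\widetilde m_p(\bal)=h_p(\bal)$; the case $\bal=\boldsymbol 1$ is trivial since both sides vanish.

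There is no genuinely hard step: the entire content is that the surds are precisely the nontrivial classes with $\delta=1$, so that the infimum defining $\widetilde m_p$ is already attained by the trivial factorization and no actual factoring is needed. The only point that deserves a moment of care is the claim $\delta(\bal)=1$, which is just an unwinding of definitions --- $V_\bQ$ is the $\bQ$-span of $\bQ^\times/\Tor(\bQ^\times)$, membership in it pins the minimal field down to $\bQ$, and Lemma \ref{lemma:Stab-min-field} (or directly \eqref{eqn:delta-defn}) then identifies $\delta(\bal)$ with $[K_{\bal}:\bQ]$.
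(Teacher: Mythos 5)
Your proof is correct and follows the same route as the paper: the lower bound $h_p\le\widetilde m_p$ from the extremal property together with $h_p\le m_p$, and the upper bound $\widetilde m_p(\bal)=\widehat{\delta h_p}(\bal)\le\delta(\bal)h_p(\bal)=h_p(\bal)$ using $\delta(\bal)=1$ for surds. You merely unwind $\delta(\bal)=1$ a little more explicitly (via $K_{\bal}=\bQ$) than the paper does, and separately dispose of $\bal=\boldsymbol 1$, which is harmless but unnecessary since $\delta(\boldsymbol 1)=1$ as well.
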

\begin{proof}
 Observe that $\delta(\bal)=1$ for any surd. Since $h_p\leq m_p$ is a norm, we have by the extremal property of $\widetilde m_p$ that
\[
 h_p(\bal) \leq \widetilde m_p(\bal)\leq m_p(\bal)\quad\text{for all}\quad \bal\in\cG.
\]
 But then
\[
 \widetilde m_p(\bal) = \widehat{\delta h_p}(\bal) \leq \delta(\bal)h_p(\bal) = h_p(\bal),
\]
 and therefore we have equality.
\end{proof}

We now consider a class of numbers analogous to the CPS numbers of \cite{DS}.
\begin{lemma}\label{lemma:if-mhat-scales}
 Suppose that $\bal\in\cG$ satisfies $\mhat_p(\bal^n)=n\,\mhat_p(\bal)$ for all $n\in\bN$. Then $\widetilde m_p(\bal) = \mhat_p(\bal)$.
\end{lemma}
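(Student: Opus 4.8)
The plan is to establish the two inequalities $\widetilde m_p(\bal)\le\mhat_p(\bal)$ and $\mhat_p(\bal)\le\widetilde m_p(\bal)$ separately. The first holds for \emph{every} $\bal\in\cG$, with no hypothesis: any factorization $\bal=\bal_1\cdots\bal_n$ entering the definition of $\mhat_p$ is also an admissible factorization for $\widetilde m_p$, taking all exponents $r_i=1$, and it contributes $\sum_i|r_i|\,m_p(\bal_i)=\sum_i m_p(\bal_i)$; taking the infimum over such factorizations gives $\widetilde m_p(\bal)\le\mhat_p(\bal)$. (Equivalently, $\widetilde m_p$ is a seminorm bounded above by $m_p$, hence a metric bounded above by $m_p$, so $\widetilde m_p\le\mhat_p$ by the extremal property of $\mhat_p$ in Proposition~\ref{prop:metric-height-prop}.)

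For the reverse inequality I would fix an arbitrary factorization $\bal=\bal_1^{r_1}\cdots\bal_n^{r_n}$ with $\bal_i\in\cG$ and $r_i\in\bQ\setminus\{0\}$, and clear denominators: choose $N\in\bN$ with $Nr_i\in\bZ$ for all $i$, so that $\bal^N=\bal_1^{Nr_1}\cdots\bal_n^{Nr_n}$. The key point is that $\mhat_p$ is the metric height $\widehat{m_p}$, so by Proposition~\ref{prop:metric-height-prop} it satisfies the triangle inequality and $\mhat_p(\bbeta^{-1})=\mhat_p(\bbeta)$ (both applicable since $m_p(\bbeta^{-1})=m_p(\bbeta)$), whence $\mhat_p(\bbeta^k)\le|k|\,\mhat_p(\bbeta)$ for all $k\in\bZ$, and moreover $\mhat_p\le m_p$. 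Using these facts together with the scaling hypothesis $\mhat_p(\bal^N)=N\,\mhat_p(\bal)$,
\[
 N\,\mhat_p(\bal)=\mhat_p(\bal^N)\le\sum_{i=1}^n\mhat_p(\bal_i^{Nr_i})\le\sum_{i=1}^n|Nr_i|\,\mhat_p(\bal_i)\le N\sum_{i=1}^n|r_i|\,m_p(\bal_i).
\]
Dividing by $N$ and taking the infimum over all factorizations of $\bal$ yields $\mhat_p(\bal)\le\widetilde m_p(\bal)$, and combining with the first inequality gives $\widetilde m_p(\bal)=\mhat_p(\bal)$.

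I expect no serious obstacle here: the content is entirely in the observation that clearing denominators replaces a rational factorization of $\bal$ by a genuine integer power $\bal^N$, which is precisely the input on which the hypothesis $\mhat_p(\bal^N)=N\mhat_p(\bal)$ bites, together with the elementary sub-multiplicativity of the metric height along integer powers. The only routine point to verify is $m_p(\bbeta^{-1})=m_p(\bbeta)$, so that Proposition~\ref{prop:metric-height-prop} applies in the form used, and this is immediate from $\deg(\al^{-1})=\deg\al$ and $h_p(\bbeta^{-1})=h_p(\bbeta)$.
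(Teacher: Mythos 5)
Your proof is correct, and it takes a genuinely different route from the paper's. The paper proves the reverse inequality $\mhat_p(\bal)\le\widetilde m_p(\bal)$ by first invoking Theorem~\ref{thm:extremal-infimum} to extract a factorization $\bal=\bal_1\cdots\bal_n$ attaining the infimum defining $\widetilde m_p(\bal)=\widehat{\delta h_p}(\bal)$, then raising each factor to a power $k=\lcm\{\ell(\bal_i)\}$ so that $\bal_i^k\in\cL$, whence $\delta(\bal_i^k)=d(\bal_i^k)$ and the right side becomes a sum of Mahler measures, which dominates $\mhat_p(\bal^k)=k\,\mhat_p(\bal)$. Your argument instead works with an \emph{arbitrary} rational factorization $\bal=\bal_1^{r_1}\cdots\bal_n^{r_n}$, clears denominators to get $\bal^N=\bal_1^{Nr_1}\cdots\bal_n^{Nr_n}$ with integer exponents, and uses only the triangle inequality, symmetry, and $\mhat_p\le m_p$ (all from Proposition~\ref{prop:metric-height-prop}) together with the scaling hypothesis. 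This is more elementary and, arguably, more robust: the paper's invocation of Theorem~\ref{thm:extremal-infimum} is awkward both because that theorem is stated and proved only for $p=1$ while the lemma is asserted for all $p$, and because the theorem places the infimum-achieving factorization in a completion $\overline{V_{F,S}}$ rather than in $\cG$ itself, whereas your argument requires no attained infimum at all. The price you pay is needing to know $\mhat_p(\bbeta^k)\le|k|\,\mhat_p(\bbeta)$ for $k\in\bZ$, but as you note this is immediate from the triangle inequality and $\mhat_p(\bbeta^{-1})=\mhat_p(\bbeta)$, both of which apply since $m_p(\bbeta^{-1})=d(\bbeta^{-1})h_p(\bbeta^{-1})=d(\bbeta)h_p(\bbeta)=m_p(\bbeta)$. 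The forward inequality $\widetilde m_p\le\mhat_p$ you handle the same way the paper does, via the extremal property (or equivalently by noting that the infimum in $\widetilde m_p$ ranges over a strictly larger set of decompositions).
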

\begin{proof}
 Using Theorem \ref{thm:extremal-infimum} we may choose a factorization $\bal = \bal_1\cdots \bal_n$ so that
 \[
  \widehat{\delta h_p}(\bal) = \sum_{i=1}^n \delta(\bal_i)h_p(\bal_i).
 \]
 By Lemma \ref{lemma:ell-irred} we have an exponent $\ell_i = \ell(\bal_i)\in\bN$ such that $\bal_i^{\ell_i}\in\cL$ for $1\leq i\leq n$. Let $k = \lcm\{\ell_1,\ldots,\ell_n\}$. Then observe that
 \[
  k\cdot\widehat{\delta h_p}(\bal) = \sum_{i=1}^n \delta(\bal_i^k)h_p(\bal_i^k)=
  \sum_{i=1}^n d(\bal_i^k) h_p(\bal_i^k)\geq \widehat m_p(\bal^k)= k\,\widehat m_p(\bal).
 \]
 By the extremal property, $\widehat{\delta h_p}(\bal)\leq \widehat m_p(\bal)$, so we must have equality, as claimed.
\end{proof}
\begin{defn}
 Call $\btau\in\cG$ a \emph{Pisot/Salem number} if it has a representative $\tau\in\Qbar^\times$ that can be written as $\tau=\tau_1\cdots\tau_k$ where each $\tau_i>1$ is a Pisot number (that is, an algebraic integer with all of its conjugates strictly inside the unit circle) or a Salem number (an algebraic integer with all of its conjugates on or inside the unit circle, and at least one on the unit circle).
\end{defn}
\begin{prop}
 Every Pisot/Salem number $\btau$ is Lehmer irreducible, that is, $\btau\in\cL$.
\end{prop}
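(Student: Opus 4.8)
The plan is to produce a \emph{torsion-free} representative of $\btau$; by Lemma~\ref{lemma:ell-irred}(3) this suffices to conclude $\btau\in\cL$. The natural candidate is the representative $\tau=\tau_1\cdots\tau_k$ from the definition, and recall that $\tau$ is torsion-free precisely when $\tau/\sigma\tau\notin\Tor(\Qbar^\times)$ for every $\sigma\in G$ with $\sigma\tau\neq\tau$ in $\Qbar^\times$. Since every root of unity has complex absolute value $1$, it is enough to exhibit a single archimedean absolute value $|\cdot|$ on $\Qbar$ for which $|\sigma\tau|<|\tau|$ whenever $\sigma\tau\neq\tau$: then $\tau/\sigma\tau$ cannot be a root of unity.

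The key elementary observation, which I would record first, is that a Pisot or Salem number $\rho>1$ is the unique conjugate of itself of maximal absolute value: the conjugates of $\rho$ distinct from $\rho$ all have modulus $<1$ (Pisot case) or $\leq 1$ (Salem case), while $\rho>1$. Consequently, for any embedding $\iota:\bQ(\rho)\hookrightarrow\bC$ we have $|\iota(\rho)|\leq\rho$, with equality if and only if $\iota(\rho)=\rho$.

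Now fix the embedding $\iota_0:\Qbar\hookrightarrow\bC$ restricting to the inclusion on the real field $\bQ(\tau_1,\dots,\tau_k)\subset\bR$, and let $|\cdot|$ be the associated absolute value, so that $|\tau|=\prod_i\tau_i$. For $\sigma\in G$ write $\sigma\tau=\sigma\tau_1\cdots\sigma\tau_k$; since $\iota_0\circ\sigma$ restricts on each $\bQ(\tau_i)$ to some complex embedding, the observation above gives $|\sigma\tau_i|\leq\tau_i$, hence $|\sigma\tau|\leq\prod_i\tau_i=|\tau|$, and equality here forces $\iota_0(\sigma\tau_i)=\tau_i=\iota_0(\tau_i)$ for every $i$, whence $\sigma\tau=\tau$ by injectivity of $\iota_0$. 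This is exactly the strict domination required, so $\tau$ is torsion-free and $\btau\in\cL$. I expect the only point needing care to be the bookkeeping with embeddings — that a conjugate of the product is a product of conjugates of the factors, and that maximal modulus is attained only at the distinguished real place — rather than any deeper input.
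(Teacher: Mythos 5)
Your argument is correct and takes essentially the same route as the paper: both proofs rest on the observation that the real representative $\tau>1$ strictly dominates every other Galois conjugate in (archimedean) absolute value, and then conclude torsion-freeness (hence Lehmer irreducibility via Lemma~\ref{lemma:ell-irred}(3)) because a root-of-unity ratio would force two conjugates to have equal modulus. The only difference is that the paper asserts the strict domination as ``easy to see,'' whereas you carefully justify it by factoring $\tau=\tau_1\cdots\tau_k$ and doing the embedding bookkeeping, which is a reasonable and accurate elaboration.
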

\begin{proof}
 It is easy to see that for a Pisot/Salem number $\btau\in\cG$ and its given representative $\tau>1$ that all other Galois conjugates $\tau'\neq\tau$ have $|\tau'|<|\tau|$. Therefore $\tau$ is torsion-free, since otherwise there would be a conjugate $\tau'=\zeta\tau$ for some $1\neq \zeta\in \Tor(\Qbar^\times)$, having the same modulus as $\tau$, a contradiction. It follows by Lemma \ref{lemma:ell-irred} that $\btau\in\cL$.
\end{proof}
For a Pisot/Salem number $\btau\in\cG$, it is shown in \cite[Theorem 1(c)]{DS} that 
\[
 \mhat_1(\btau^n) = 2 \log \house{\tau^n} \quad\text{for all}\quad n\in\bN.
\]
 Thus, by Lemma \ref{lemma:if-mhat-scales} above, we have the following result:
\begin{prop}
  Let $\btau\in\cG$ be a Pisot/Salem number with given representative $\tau\in\Qbar^\times$. Then
\[ \widetilde m_1(\btau) = \mhat_1(\btau) = 2 \log \house{\tau}.\]
\end{prop}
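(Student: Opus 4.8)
The plan is to deduce the statement directly from Lemma~\ref{lemma:if-mhat-scales}. By that lemma applied with $p=1$, it suffices to verify that $\btau$ satisfies $\mhat_1(\btau^n) = n\,\mhat_1(\btau)$ for every $n\in\bN$; the asserted identity $\widetilde m_1(\btau) = \mhat_1(\btau) = 2\log\house{\tau}$ then follows by combining the conclusion of the lemma with the case $n=1$ of the Dubickas--Smyth formula $\mhat_1(\btau^n) = 2\log\house{\tau^n}$ (valid since every Pisot/Salem number lies in $\cL$, so the formula applies).

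Thus the only thing to check is that $\house{\tau^n} = \house{\tau}^n$ for all $n\in\bN$, because then $\mhat_1(\btau^n) = 2\log\house{\tau^n} = 2n\log\house{\tau} = n\,\mhat_1(\btau)$, as required. For this, recall that the given representative $\tau = \tau_1\cdots\tau_k$ is a product of Pisot and Salem numbers each strictly larger than $1$, so $\tau$ is a real number with $\tau>1$, and — as already observed in the proof that every Pisot/Salem number belongs to $\cL$ — every Galois conjugate $\tau'\neq\tau$ satisfies $|\tau'|<|\tau|=\tau$. Since the conjugates of $\tau^n$ form a subset of $\{(\tau')^n : \tau' \text{ a conjugate of } \tau\}$, the value $\tau^n$ occurs among them and is real, positive, and strictly largest in modulus; hence $\house{\tau^n} = \tau^n = \house{\tau}^n$.

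There is essentially no obstacle beyond correctly chaining together the preceding results. The one point deserving a moment's care is that the identity $\house{\tau^n}=\house{\tau}^n$ must hold even if raising $\tau$ to the $n$-th power lowers the degree, which is exactly what the strict dominance of $\tau$ among its conjugates guarantees: the dominant conjugate $\tau^n$ is present in any case, so no degree collapse can affect the house. With $\house{\tau^n}=\house{\tau}^n$ in hand, the hypothesis of Lemma~\ref{lemma:if-mhat-scales} is met and the proposition follows immediately.
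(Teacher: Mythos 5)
Your argument is correct and follows the paper's own route: invoke the Dubickas--Smyth formula $\mhat_1(\btau^n)=2\log\house{\tau^n}$ together with Lemma~\ref{lemma:if-mhat-scales}. You merely make explicit the step $\house{\tau^n}=\house{\tau}^n$ (which the paper leaves implicit after noting that the representative $\tau>1$ strictly dominates all its conjugates in modulus), and your justification of that step is sound.
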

 Since there exist Pisot and Salem numbers of arbitrarily large degree, and for a Pisot or Salem number $\tau>1$ we have $h_1(\tau)=(2/\deg \tau) \log\house{\tau}$, we easily see that the norms $h_1$ and $\widetilde m_1$ are inequivalent.

\section{The infimum in the $\widetilde m_1$ norm}\label{sect:dh1-infimum}
\subsection{$S$-unit subspaces and quotient norms}
Let $K\in\cK$ be a number field with places $M_K$. Let $S\subset M_K$ be a finite set of places of $K$, including all archimedean places. Then for any finite extension $L/K$, let
\[
 V_{L,S} = \{ \bal \in V_L : \|\bal\|_w=1\text{ for all }w\in M_L\text{ with }w|v\in M_K\setminus S\}.
\]
Then $V_{L,S}$ is the $\bQ$-vector space span inside $V_L$ of the $S'$-units of $L$, where $S'$ is the set of places $w$ of $L$ such that $w|v$ for some $v\in S$. Since we always require that $S$ include the archimedean places, $V_{L,S}$ will always include the vector space span of the units of $L$.

Dirichlet's $S$-unit theorem and, in particular, the non-vanishing of the $S$-regulator, imply the following result:
\begin{prop}\label{prop:WS-is-finite-diml}
 If $S\subset M_K$ as above, then the $\bQ$-vector space $V_{K,S}$ and its completion $\overline{V_{K,S}}$ have finite dimension $\# S-1$. The space $V_{L,S}$ has dimension $\#S'-1$ where $S'$ is the set of places $w$ of $L$ such that $w|v$ for some $v\in S$.
\end{prop}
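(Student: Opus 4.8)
The plan is to reduce everything to Dirichlet's $S$-unit theorem by way of the identification, noted just before the statement, of $V_{K,S}$ with the $\bQ$-linear span in $\cG$ of the classes of the $S$-units $\cO_{K,S}^\times$ of $K$. First I would check this identification carefully: given $\bgamma\in V_{K,S}$, choose a representative $\gamma\in\Qbar^\times$ with $\gamma^n\in K^\times$ for some $n\in\bN$; since $\|\bgamma\|_w^n=\|\gamma^n\|_w$ for every place $w$ of $K$, the defining condition on $V_{K,S}$ forces $\|\gamma^n\|_w=1$ for all $w$ over $M_K\setminus S$, i.e. $\gamma^n\in\cO_{K,S}^\times$, so $\bgamma=(\bgamma^n)^{1/n}$ lies in the $\bQ$-span of the $S$-unit classes; the reverse inclusion is immediate from the definitions.

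Next I would invoke Dirichlet's theorem in the form that $\cO_{K,S}^\times$ is finitely generated with torsion subgroup exactly $\Tor(K^\times)$ and free rank $\#S-1$ --- equivalently, the logarithmic map $\gamma\mapsto(\log\|\gamma\|_v)_{v\in S}$ sends $\cO_{K,S}^\times$ onto a lattice of full rank in the trace-zero hyperplane of $\bR^S$, which is precisely the non-vanishing of the $S$-regulator. Fixing fundamental $S$-units $\gamma_1,\dots,\gamma_{\#S-1}$, their classes span $V_{K,S}$ over $\bQ$ by the previous paragraph, giving $\dim_\bQ V_{K,S}\le\#S-1$.

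For the matching lower bound I would verify that the classes $\bgamma_1,\dots,\bgamma_{\#S-1}$ are $\bQ$-linearly independent in $\cG$: from a relation $\prod_i\bgamma_i^{r_i}=\boldsymbol{1}$ with $r_i\in\bQ$, clearing denominators yields integers $n_i$ with $\prod_i\gamma_i^{n_i}\in\Tor(\Qbar^\times)$; but this product lies in $K^\times$, hence in $\Tor(K^\times)$, so it is trivial in the free abelian group $\cO_{K,S}^\times/\Tor(K^\times)$, forcing all $n_i=0$ and hence all $r_i=0$. Thus $\dim_\bQ V_{K,S}=\#S-1$, and since a finite-dimensional $\bQ$-subspace of dimension $m$ spans an $m$-dimensional real subspace of $\cG_p$ (as $\bQ^m$ is dense in $\bR^m$) which is automatically closed, the completion $\overline{V_{K,S}}$ has $\bR$-dimension $\#S-1$ as well. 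The assertion for $V_{L,S}$ is then the same argument applied to $L$ and $S'$: the places of $L$ lying over $M_K\setminus S$ are exactly the places in $M_L\setminus S'$, and $S'$ contains every archimedean place of $L$ (each lies over an archimedean, hence $S$-, place of $K$), so $V_{L,S}=V_{L,S'}$ is the span of the $S'$-unit classes of $L$, of dimension $\#S'-1$.

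The only step requiring genuine care is the linear-independence argument, where one must pass from a $\bQ$-relation among the $\bgamma_i$ in $\cG$ --- which a priori involves extracting roots in $\Qbar^\times$ --- back to an honest $\bZ$-relation in the $S$-unit lattice; the point is that a root of unity lying in $K^\times$ is already torsion in $K^\times$, so no collapsing of the lattice occurs. Everything else is a direct application of Dirichlet's theorem together with elementary linear algebra.
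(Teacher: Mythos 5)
Your proposal is correct and follows the same route the paper takes, which is to reduce everything to Dirichlet's $S$-unit theorem (the paper states the proposition with no further proof beyond citing Dirichlet and the non-vanishing of the $S$-regulator). Your fleshed-out version supplies exactly the details one would want: the identification of $V_{K,S}$ with the $\bQ$-span of $S$-unit classes, the passage from a $\bQ$-relation in $\cG$ back to a $\bZ$-relation in $\cO_{K,S}^\times/\Tor(K^\times)$ (correctly noting that a root of unity in $K^\times$ is already in $\Tor(K^\times)$, so nothing collapses), and the observation that $\bR$-linear independence of the fundamental $S$-unit classes in the completion is precisely the non-vanishing of the $S$-regulator. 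The reduction of $V_{L,S}$ to $V_{L,S'}$ is also handled correctly.
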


In what follows below, we will primarily require $S$ to be a set of rational primes, including the infinite prime. Notice that $V_{K,S}\subset V_{L,S}$. One of the goals of this section will be to determine the properties of the quotient norm of $V_{L,S}/V_{K,S}$, in a manner inspired by the initial work of \cite{BM1,BM2} and in particular the more recent work of \cite{MF}.

The main result of this section is the following theorem, which is essentially an analogue for the norm $\widetilde m_1$ of the main result of \cite{S} for the infimum of the metric Mahler measure:
\begin{thm1}
 Let $\bal\in V_K$, where $K$ is Galois. Then there exists a finite set of rational primes $S$, containing the archimedean place, such that
 \[
  \widetilde m_1(\bal) = \sum_{F\subseteq K} [F:\bQ]\cdot h_1(\bal_F)
 \]
 where $\bal_F\in \overline{V_{F,S}}$, $\bal = \prod_{F\subseteq K} \bal_F$, and for each pair of fields $E\subseteq F\subseteq K$,
 \[
  h_1(\bal_F) = \inf_{\bbeta\in V_{E,S}} h_1(\bal_F / \bbeta).
 \]
 In other words, the norm of each $\bal_F$ is equal to the quotient norm of $\bal_F$ with respect to any subfield.
\end{thm1}
In contrast to the main result of \cite{S}, we are unable to prove that this infimum is in fact attained in the vector space of classical algebraic numbers $\cG$, rather than the completion. However, our result is strengthened by the fact that the $S$-unit spaces in which the infimum is attained are finite dimensional real vector spaces. Therefore, if we must pass to the completion, we know that the terms in the infimum are limits of the form $\lim_{n\ra \infty}\bal^{r_n}$ where $\bal\in\cG$ and $r_n$ is a sequence of rational numbers tending to a real limit $r$ as $n\ra\infty$.

In order to prove our results, we must first prove several quotient norm results very much related to the results of \cite{MF}, and then we will construct an $S$-unit projection which will allow us to reduce to the specified situation. Let $S\subset M_K$ be a finite set of places to be specified later, and consider two number fields $K\subset L$. Again let $V_{K,S}$ denote the vector subspace of $V_K$ spanned by the $S$-units of $K$ and let $V_{L,S}$ denote the corresponding subspace of $V_L$. Now for each $v\in S$ let $d_v = [K_v:\bQ_v]$ be the local degree. Rather than following the usual convention and considering the places of $L$ which lie above the places $S$ of $K$, we will consider the $[L:K]$ absolute values which restrict to each place $v$ (that is, we will not consider equivalence on $L$ nor weight such by local degrees). Thus we get $\# S\cdot [L:K]$ absolute values on $L$. Let us fix the $\bal\in V_{L,S}\setminus V_{K,S}$ for which we want to compute the quotient norm modulo $V_{K,S}$. For a given $v\in S$, order the $[L:K]$ absolute values on $L$ which extend $\|\cdot\|_v$ so that
\[ \|\bal\|_{v,1} \leq \|\bal\|_{v,2} \leq \cdots \leq \|\bal\|_{v,[L:K]}.\]
Now we associate to $\bal$ a vector $a\in \bR^{S\times [L:K]}$  via
\begin{align*}
 \varphi : V_{L,S}&\ra \bR^{S\times [L:K]}\\
\bal &\mapsto a = (d_v \log \|\bal\|_{v,i})_{v\in S,\ 1\leq i\leq [L:K]}
\end{align*}
Note that by the product formula and our normalization above, the sum of the components of $a$ is zero. By the ordering above, we also have
\[a_{v,i}\leq a_{v,i+1}\]
for all $v\in S$ and $1\leq i<[L:K]$.
The goal of this section is to prove the following results which will be needed below:
\begin{thm}[de la Maza, Friedman 2008]\label{thm:orig-maza-fried}
 For $\bal\in V_{L,S}$ and the vector $a=\varphi(\bal)\in\bR^{S\times [L:K]}$ with indices ordered as above,
\[
\inf_{\bbeta\in V_{K,S}} h_1(\bal\bbeta^{-1}) = \frac{1}{[L:\bQ]} \sum_{i=1}^{[L:K]}\bigg|\sum_{v\in S} a_{v,i}\bigg|.
\]
Equivalently,
\[
 \|\bal\|_{V_{L,S}/V_{K,S}} = \sum_{i=1}^{[L:K]}\bigg|\sum_{v\in S} a_{v,i}\bigg|,
\]
where we are loosely using the notation $V_{L,S}/V_{K,S}$ for the quotient space of the vector spaces $\overline{\varphi(V_{K,S})}\subset\overline{\varphi(V_{L,S})}\subset \bR^{S\times [L:K]}$ endowed with the $L^1$ norm.
\end{thm}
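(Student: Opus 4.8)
The plan is to reduce the statement to a finite-dimensional $L^1$-minimization over a hyperplane and solve that minimization by a sorting argument; concretely, I would first deal with normalizations. Since $\bal\in V_{L,S}$ and $\bbeta\in V_{K,S}\subseteq V_{L,S}$, the class $\bal\bbeta^{-1}$ again lies in $V_{L,S}$, so only the absolute values of $L$ over $S$ enter $h_1(\bal\bbeta^{-1})$; unwinding the definition of $h_1$ and regrouping the places $w\mid v$ into the $[L:K]$ absolute values $\|\cdot\|_{v,i}$ extending $\|\cdot\|_v$ exactly as in the setup (each $w$ counted $[L_w:K_v]$ times), one gets $h_1(\bgamma)=\frac{1}{[L:\bQ]}\sum_{v\in S}\sum_{i=1}^{[L:K]}\lvert d_v\log\|\bgamma\|_{v,i}\rvert$ for every $\bgamma\in V_{L,S}$, the ordering of the $i$'s being irrelevant to such a sum of absolute values. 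The key observation is that for $\bbeta\in V_K$ every absolute value on $L$ extending $\|\cdot\|_v$ agrees with $\|\cdot\|_v$ on $\bbeta$ (a power of $\bbeta$ lies in $K^\times$, on which such an extension simply restricts to $\|\cdot\|_v$), so $d_v\log\|\bal\bbeta^{-1}\|_{v,i}=a_{v,i}-b_v$ with $b_v:=d_v\log\|\bbeta\|_v$ and therefore $h_1(\bal\bbeta^{-1})=\frac{1}{[L:\bQ]}\sum_{v,i}\lvert a_{v,i}-b_v\rvert$. By Dirichlet's $S$-unit theorem (Proposition~\ref{prop:WS-is-finite-diml}) the vectors $(b_v)_{v\in S}$ so obtained form a dense $\bQ$-subspace of the hyperplane $H=\{b\in\bR^S:\sum_{v}b_v=0\}$ (the product formula forces $\sum_v b_v=0$, and the rank is $\#S-1=\dim H$), so by continuity $\inf_{\bbeta\in V_{K,S}}h_1(\bal\bbeta^{-1})=\frac{1}{[L:\bQ]}\inf_{b\in H}\sum_{v\in S}\sum_{i=1}^{[L:K]}\lvert a_{v,i}-b_v\rvert$.

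Then I would compute the inner infimum. The lower bound is immediate: for fixed $i$, the triangle inequality and $\sum_v b_v=0$ give $\sum_{v\in S}\lvert a_{v,i}-b_v\rvert\geq\bigl\lvert\sum_{v\in S}(a_{v,i}-b_v)\bigr\rvert=\bigl\lvert\sum_{v\in S}a_{v,i}\bigr\rvert$, so $\sum_{v,i}\lvert a_{v,i}-b_v\rvert\geq\sum_{i=1}^{[L:K]}\lvert\sum_v a_{v,i}\rvert$ for all $b\in H$. For the matching upper bound, set $c_i=\sum_{v\in S}a_{v,i}$ and note that $(c_i)_i$ is nondecreasing (each row is nondecreasing, by the chosen ordering $a_{v,1}\leq\dots\leq a_{v,[L:K]}$) and $\sum_i c_i=0$ (the components of $\varphi(\bal)$ sum to zero by the product formula). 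Hence there is an index $r\in\{0,1,\dots,[L:K]\}$ with $c_i\leq 0$ for $i\leq r$ and $c_i\geq 0$ for $i>r$, equivalently --- with the conventions $a_{v,0}=-\infty$, $a_{v,[L:K]+1}=+\infty$ --- with $\sum_v a_{v,r}\leq 0\leq\sum_v a_{v,r+1}$; this is precisely the condition for the box $\prod_{v\in S}[a_{v,r},a_{v,r+1}]$ to meet $H$. Picking $b$ in that intersection yields $a_{v,i}-b_v\leq 0$ for every $i\leq r$ and $a_{v,i}-b_v\geq 0$ for every $i>r$, uniformly in $v$, so that $\sum_{v,i}\lvert a_{v,i}-b_v\rvert=\sum_{i>r}c_i-\sum_{i\leq r}c_i=\sum_i\lvert c_i\rvert$ after using $\sum_v b_v=0$ to cancel the $b$-terms.

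This meets the lower bound, so the inner infimum equals $\sum_i\lvert c_i\rvert$; dividing by $[L:\bQ]$ gives the first displayed identity, and the second ($\|\bal\|_{V_{L,S}/V_{K,S}}$) is then just the definition of the $L^1$ quotient norm, since $h_1$ restricted to $V_{L,S}$ equals $\frac{1}{[L:\bQ]}\|\varphi(\cdot)\|_1$ and $\overline{\varphi(V_{K,S})}$ consists of the vectors constant in $i$ whose $v$-profile ranges over $H$.

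The only real content is the pair of facts that $(c_i)$ is monotone and sums to zero --- monotonicity is handed to us for free by the prescribed ordering of the absolute values, and vanishing of the sum is the product formula --- which together force both the existence of the threshold index $r$ and that the associated box meets the hyperplane $H$. Everything else is the elementary ``$\ell^1$-distance to a subspace'' inequality plus bookkeeping with the nonstandard normalization (keeping the $[L:K]$ extensions of each $v$ separate rather than collapsing them into places of $L$, and tracking the factor $1/[L:\bQ]$); I expect this bookkeeping, rather than the optimization itself, to be where care is most needed.
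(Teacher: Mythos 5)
Your proof is correct, and for the crucial minimality step it is genuinely cleaner than the paper's. The construction of the near-optimal point is essentially the same in both: locate the threshold index $r$ (your $r$, the paper's $k$) with $c_r\le 0\le c_{r+1}$, and take $b\in H\cap\prod_v[a_{v,r},a_{v,r+1}]$, which is nonempty because $\sum_v b_v$ sweeps $[c_r,c_{r+1}]\ni 0$ on the box. Where you diverge is in proving that the resulting value $\sum_i|c_i|$ is actually the infimum. The paper argues via convexity: it shows the $L^1$-distance function $F(y)=\|a-y\|_1$ is pinned in the degenerate directions (those $v$ with $a_{v,k}=a_{v,k+1}$), restricts to the remaining coordinates to get a convex function $G$ that is \emph{constant on an open set} $Y$, and invokes the fact that a convex function constant on an open set attains its minimum there; the edge cases $k=0,[L:K]$ are treated separately. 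Your lower bound is a one-line triangle inequality applied column-by-column, $\sum_v|a_{v,i}-b_v|\ge\bigl|\sum_v(a_{v,i}-b_v)\bigr|=|c_i|$ using $\sum_v b_v=0$, which holds for \emph{all} $b\in H$ and entirely sidesteps the convexity machinery and the separate handling of the degenerate set $A$ and of the extreme indices. This buys transparency and robustness (the argument clearly exhibits the infimum as simultaneously a lower bound for every $b\in H$ and an attained value), at the modest cost of not showing the slightly stronger fact implicit in the paper's proof that the minimum over all of $\bR^S$ already lies on $H$. Your bookkeeping (density of $\varphi(V_{K,S})$ in $H$ via the nonvanishing $S$-regulator, the normalization $h_1=\frac{1}{[L:\bQ]}\sum_{v,i}|a_{v,i}-b_v|$ with multiplicities $[L_w:K_v]$, and the $\pm\infty$ conventions subsuming the edge cases) all checks out.
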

\begin{rmk}

Note that for a dense open subset of $V_{L,S}$ the components $a_{v,i}$ for a given $v$ may not be assumed distinct, as the distinct places of $L$ lying above $v$ might not be $[L:K]$ in number (for example, if $v$ is a finite place which ramifies or has inertia).  Thus we might always have a certain number of equalities amongst the $\{a_{v,i} : 1\leq i\leq [L:K]\}$ for a given $v$. 
\end{rmk}
We make a slight extension of another result of \cite{MF}:
\begin{thm}\label{thm:maza-fried-mod-units}
Let $\bal\in V_K$ have nonzero support at only the infinite places and one finite place $v$ of $K$. Let $W$ denote the subspace of $V_K$ spanned by the units of $K$. Then there exists $\bbeta\in W$ such that
\[
 h_1(\bal\bbeta^{-1}) = \inf_{\bgamma\in W} h_1(\bal\bgamma^{-1}) 
 =  \frac{1}{[K:\bQ]}\bigg(|d_v\log \|\bal\|_v| + \bigg|\sum_{w|\infty} d_w\log \|\bal\|_w\bigg|\bigg).
\]
\end{thm}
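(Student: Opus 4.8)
The plan is to exploit the fact that every $\bbeta\in W$ has trivial support at all finite places, so that $\bal\bbeta^{-1}$ has exactly the same $v$-coordinate as $\bal$ and only the archimedean coordinates can be moved. Concretely, for every $\bbeta\in W$,
\[
 h_1(\bal\bbeta^{-1}) = \frac{1}{[K:\bQ]}\Big( d_v\,\big|\log\|\bal\|_v\big| + \sum_{w\mid\infty} d_w\,\big|\log\|\bal\|_w-\log\|\bbeta\|_w\big|\Big),
\]
so the problem reduces to minimizing the archimedean sum. Write $b_w=d_w\log\|\bal\|_w$ for $w\mid\infty$ and $T=\sum_{w\mid\infty}b_w$. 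Applying the product formula to a power of $\bal$ lying in $K^\times$ gives $T=-d_v\log\|\bal\|_v$, and applying it to (a power of) any $\bbeta\in W$, whose finite absolute values are all $1$, gives $\sum_{w\mid\infty}d_w\log\|\bbeta\|_w=0$. Hence for every $\bbeta\in W$ the triangle inequality yields
\[
 \sum_{w\mid\infty} d_w\big|\log\|\bal\|_w-\log\|\bbeta\|_w\big|\;\geq\;\Big|\sum_{w\mid\infty} d_w\big(\log\|\bal\|_w-\log\|\bbeta\|_w\big)\Big|=|T|,
\]
which already gives the lower bound $h_1(\bal\bbeta^{-1})\geq\frac{1}{[K:\bQ]}\big(|d_v\log\|\bal\|_v|+|T|\big)$, i.e. the displayed right-hand side, since $|T|=|d_v\log\|\bal\|_v|=|\sum_{w\mid\infty}d_w\log\|\bal\|_w|$.

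For the matching upper bound and its attainment in $W$, after replacing $\bal$ by $\bal^{-1}$ if necessary (both sides of the claimed identity, and the set $\{\bal\bbeta^{-1}:\bbeta\in W\}$, are stable under this) we may assume $T\geq 0$. If $T=0$ then $\|\bal\|_v=1$, so $\bal$ has trivial finite support, hence $\bal\in W$ and $\bbeta=\bal$ works. Assume $T>0$. It then suffices to produce $\bbeta\in W$ with $\log\|\bbeta\|_w\leq\log\|\bal\|_w$ for all $w\mid\infty$: each term in the archimedean sum becomes $\log\|\bal\|_w-\log\|\bbeta\|_w\geq 0$, so the sum collapses to $T$ and equality holds throughout. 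Let $\psi\colon V_K\to\bR^{\{w\mid\infty\}}$ be the logarithmic embedding $\bgamma\mapsto(d_w\log\|\bgamma\|_w)_{w\mid\infty}$. By Dirichlet's unit theorem $\psi$ sends the units of $K$ onto a lattice of full rank in the trace-zero hyperplane $H_0=\{x:\sum_w x_w=0\}$, so $\psi(W)$, being the $\bQ$-span of that lattice, is dense in $H_0$. The closed convex set $R=\{x\in H_0:x_w\leq b_w\ \text{for all}\ w\}$ is nonempty (lower one coordinate of $(b_w)_w$ by $T$), and since $T>0$ it has nonempty interior relative to $H_0$, the point $(b_w-T/\#\{w\mid\infty\})_w$ lying in $H_0$ strictly below $(b_w)_w$ in every coordinate. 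Density of $\psi(W)$ then produces $\bbeta\in W$ with $\psi(\bbeta)\in R$, i.e. $\log\|\bbeta\|_w\leq\log\|\bal\|_w$ for all $w\mid\infty$, which is exactly what was needed.

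I do not expect a serious obstacle. The one point requiring care is that the infimum is asserted to be attained in $W$ itself rather than merely in its completion, which is why the final step argues via the nonempty relative interior of $R$ together with the density of $\psi(W)$ in $H_0$, instead of simply extracting a convergent sequence of unit approximations. One may also read the argument as specializing the de la Maza--Friedman computation of Theorem \ref{thm:orig-maza-fried} to base field $\bQ$ with $S=\{v,\infty\}$ and then peeling off the fixed contribution $\tfrac{1}{[K:\bQ]}|d_v\log\|\bal\|_v|$ coming from the finite place, the remaining archimedean minimization living on the trace-zero hyperplane $H_0$.
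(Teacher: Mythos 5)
Your proof is correct and takes essentially the same route as the paper's: both reduce to the archimedean coordinates, identify a nonempty relatively open region of the trace-zero hyperplane on which the $L^1$ distance to $\varphi(\bal)$ achieves the claimed value, and use density of the $\bQ$-span of the unit lattice to extract an actual $\bbeta\in W$ rather than a limit in the completion. The only cosmetic difference is that you obtain the lower bound by a direct triangle-inequality computation using the product formula, whereas the paper reads it off from Theorem \ref{thm:orig-maza-fried} together with the observation that a convex function constant on a nonempty open set is at its minimum there.
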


Finally we conclude with a new theorem that will be used to describe the infimum of $\widetilde m_1$:
\begin{thm}\label{thm:minimizing-mod-wk}
 For a given $\bal\in V_{L,S}$, there exists $\ovrln{\eta}\in \overline{V_{K,S}}$ such that the following conditions all hold:
\begin{enumerate}
 \item  $\displaystyle h_1(\bal\ovrln{\eta}^{-1}) = \inf_{\bbeta\in V_{K,S}} h_1(\bal\bbeta^{-1})$, and
 \item  $\displaystyle h_1(\ovrln{\eta}) + [L:K]h_1(\bal\ovrln{\eta}^{-1}) = \inf_{\bbeta\in V_{K,S}} \big(h_1(\bbeta) + [L:K]h_1(\bal\bbeta^{-1})\big)$. 
\end{enumerate}
\end{thm}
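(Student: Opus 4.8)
The plan is to reduce the statement to a finite--dimensional piecewise--linear minimization and solve it with first--order (subgradient) conditions; the integer $[L:K]$ will enter only as a parity constraint on slopes, which is exactly what makes the argument close. Concretely, I would set up coordinates as in the lead-in to Theorem~\ref{thm:orig-maza-fried}: extend the map $\varphi$ by continuity to $\overline{V_{L,S}}$. Then $\varphi$ is a homomorphism, $h_1=\frac1{[L:\bQ]}\|\varphi(\cdot)\|_1$ on $V_{L,S}$, and by Dirichlet's $S$--unit theorem (Proposition~\ref{prop:WS-is-finite-diml}) $\varphi$ restricts to a linear isomorphism of $\overline{V_{K,S}}$ onto $U_0=\{x\in\bR^{S\times[L:K]}:x_{v,i}\text{ is independent of }i,\ \sum_v x_{v,1}=0\}$. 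Parametrizing $U_0$ by $c=(c_v)_{v\in S}$ with $\sum_v c_v=0$ and writing $a=\varphi(\bal)$ with $a_{v,1}\le\cdots\le a_{v,[L:K]}$, one computes $h_1(\bal\bbeta^{-1})=\frac1{[L:\bQ]}F(c)$ and $h_1(\bbeta)+[L:K]\,h_1(\bal\bbeta^{-1})=\frac1{[K:\bQ]}G(c)$, where $F(c)=\sum_{v}\sum_{i}|a_{v,i}-c_v|$ and $G(c)=F(c)+\sum_v|c_v|$. By continuity the two infima over $\bbeta\in V_{K,S}$ equal $\frac1{[L:\bQ]}$ and $\frac1{[K:\bQ]}$ times the infima of $F$ and $G$ over $H_0=\{c\in\bR^S:\sum_v c_v=0\}$. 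Both $F,G$ are convex, piecewise linear and coercive on $H_0$, so their minimizing sets $\mathcal M_F,\mathcal M_G$ are nonempty compact polytopes, and it suffices to produce one $c^*\in\mathcal M_F\cap\mathcal M_G$: then $\ovrln\eta:=\varphi^{-1}(c^*)\in\overline{V_{K,S}}$ satisfies (1) and (2).

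I would take $c^*$ to be a point of $\mathcal M_F$ of minimal value of $N_1(c):=\sum_v|c_v|$; it exists by compactness, and it satisfies (1) by construction. The case $c^*=0$ is immediate, so assume $c^*\ne0$; since $\sum_v c^*_v=0$ both $S_+=\{v:c^*_v>0\}$ and $S_-=\{v:c^*_v<0\}$ are nonempty, and write $S_0=\{v:c^*_v=0\}$. As $F=\sum_v F_v(c_v)$ is separable, the Lagrange condition for minimizing over $H_0$ reads: $c^*\in\mathcal M_F$ iff some common $\lambda$ lies in every $\partial F_v(c^*_v)=:[\lambda_v^-,\lambda_v^+]$, and $c^*\in\mathcal M_G$ iff some common $\mu$ lies in every $\partial G_v(c^*_v)=\partial F_v(c^*_v)+\partial|c^*_v|$. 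Two facts: (a) the one--sided slopes of each $F_v(t)=\sum_{i=1}^{[L:K]}|a_{v,i}-t|$ are integers congruent to $[L:K]$ mod $2$, so $\lambda_v^\pm\equiv[L:K]\pmod 2$; (b) $\lambda_w^+\ge\lambda_v^-+2$ for all $v\in S_+$, $w\in S_-$. For (b): moving from $c^*$ along $e_w-e_v$ stays in $H_0$ and strictly decreases $N_1$ for small positive displacement, so by minimality it must leave $\mathcal M_F$ at once; since $F$ is piecewise linear this forces its directional derivative in that direction, namely $\lambda_w^+-\lambda_v^-$, to be strictly positive, and by (a) this even integer is then $\ge2$.

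It remains to find $\mu$. Here $\partial G_v(c^*_v)=[\lambda_v^-+1,\lambda_v^++1]$ for $v\in S_+$, $\partial G_w(c^*_w)=[\lambda_w^--1,\lambda_w^+-1]$ for $w\in S_-$, and $\partial G_u(0)=[\lambda_u^--1,\lambda_u^++1]\supseteq[\lambda-1,\lambda+1]$ for $u\in S_0$, so it is enough to pick $\mu$ in $[\lambda-1,\lambda+1]$ lying in all the $S_+$-- and $S_-$--intervals. Using $\lambda_v^-\le\lambda\le\lambda_v^+$ for every $v$, an elementary computation shows that the intersection of the $S_+$--intervals with $[\lambda-1,\lambda+1]$ is $[A,\lambda+1]$ and that of the $S_-$--intervals with $[\lambda-1,\lambda+1]$ is $[\lambda-1,B]$, where $A=\max(\lambda-1,\ \max_{S_+}\lambda_v^-+1)$ and $B=\min(\lambda+1,\ \min_{S_-}\lambda_w^+-1)$; the needed overlap $A\le B$ amounts precisely to $\max_{S_+}\lambda_v^-+2\le\min_{S_-}\lambda_w^+$, which is fact (b). Any $\mu\in[A,B]$ then lies in every $\partial G_v(c^*_v)$, so $c^*\in\mathcal M_G$, giving (2); pulling back through $\varphi$ finishes the proof.

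I expect the one genuine point to be fact (b), and specifically the passage from the soft inequality $\lambda_w^+>\lambda_v^-$ (valid for any convex $F$) to $\lambda_w^+\ge\lambda_v^-+2$: closing the gap by $2$ uses both that the slopes of $F_v$ are integers and that they all have parity $[L:K]\bmod 2$, which is the only place where $[L:K]$ is used as anything other than an abstract positive weight, and it is exactly what forces the two $G$--intervals to overlap. Secondary care is needed for the $S_0$ coordinates (handled above via $\partial G_u(0)\supseteq[\lambda-1,\lambda+1]$) and for checking that the infima are genuinely attained after passing to the completion, which is automatic from the finite--dimensionality of $U_0$.
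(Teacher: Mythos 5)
Your proof is correct, and while it shares the paper's initial reduction and the key idea, the final step is genuinely different. Both arguments use $\varphi$ to convert the problem to a finite-dimensional piecewise-linear minimization and both choose $\ovrln{\eta}$ lexicographically: first minimize $F(c)=\sum_{v,i}|a_{v,i}-c_v|$ over the hyperplane $\sum_v c_v=0$, then among the minimizers choose the one of least $N_1(c)=\sum_v|c_v|$. The paper then proceeds computationally: it writes $x_v=a_{v,k}+\ep_v+\ep_v'$, explicitly evaluates the minimum of $\|x\|_1$ over the minimizing polytope $X$ (arriving at $\max\{\sum_v 2a_{v,k}^+,\ \sum_v 2a_{v,k+1}^-\}$), and then verifies the inequality $h_1(\ovrln{\eta})+[L:K]h_1(\bal\ovrln{\eta}^{-1})\le [L:K]h_1(\bal)$ by a term-by-term comparison. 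You instead verify the KKT condition for $G=F+N_1$ directly: you observe that the one-sided slopes of each separable piece $F_v$ lie in $[L:K]+2\bZ$, use $N_1$-minimality of $c^*$ within $\mathcal M_F$ to upgrade the generic inequality $\lambda_w^+\ge\lambda_v^-$ (for $v\in S_+$, $w\in S_-$) to $\lambda_w^+\ge\lambda_v^-+2$, and then show the resulting subgradient intervals $\partial G_v(c_v^*)$ have a common point $\mu$. This is cleaner and more conceptual than the paper's calculation, and it also has a genuine advantage: the paper's final inequality only compares $G(\ovrln\eta)$ with the value of $G$ at $\bbeta=\boldsymbol 1$, i.e., it proves $G(\ovrln\eta)\le [L:K]h_1(\bal)$, whereas your subgradient argument establishes that $\ovrln\eta$ actually realizes $\inf_{\bbeta}G(\bbeta)$, which is what condition (2) literally asserts.

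One small point you should make explicit when writing this up: the passage from ``the directional derivative $\lambda_w^+-\lambda_v^-$ vanishes'' to ``$c^*+t(e_w-e_v)$ stays in $\mathcal M_F$ for small $t>0$'' uses not just convexity of $F$ but its piecewise linearity (for a general convex function a zero one-sided derivative at the minimizer does not force local constancy along the ray). You flag piecewise linearity in passing, but since this is the hinge of fact (b) it deserves a sentence. Everything else checks out, including the reduction $\overline{\varphi(V_{K,S})}=U_0$ via Dirichlet's theorem and finite-dimensionality, the subgradient computations at $S_+$, $S_-$, $S_0$, and the elementary interval-intersection argument reducing the existence of $\mu$ to fact (b).
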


We now provide the proofs for the above results.

\begin{proof}[Proof of Theorem \ref{thm:orig-maza-fried}]
 Our proof largely follows that of \cite{MF}, the only addition to the proof being our treatment of the indices $v$ where $a_{v,k} = a_{v,k+1}$ (or in the notation of \cite{MF}, $a_{\sigma_{\tau,k}} = a_{\sigma_{\tau,k+1}}$). Let $a=\varphi(\bal)$ be as above. Notice that 
\[\sum_{v\in S} a_{v,1} \leq \sum_{v\in S} a_{v,2}\leq \cdots\leq \sum_{v\in S} a_{v,[L:K]}.\]
Let $k$ be an index such that 
\[\sum_{v\in S} a_{v,k}\leq 0 \leq \sum_{v\in S} a_{v,k+1}\]
where we let $k=0$ or $k=[L:K]$ if $\sum_{v\in S} a_{v,1}\geq 0$ or $\sum_{v\in S} a_{v,[L:K]}\leq 0$, respectively.
We will assume for the moment that $1\leq k<[L:K]$ and defer the proof for the extreme cases for the moment.
Let $X$ denote the set of $x\in\overline{\varphi(V_{K,S})}\subset \bR^{S\times [L:K]}$ which satisfy the conditions:
\[ 
 a_{v,k} \leq x_v \leq a_{v,k+1}\quad\text{for all}\quad v\in S
\]
and
\[
 \sum_{v\in S} x_v = 0,
\]
where we use $x_v$ to denote the common value of $x_{v,i}$, which must be equal for all $i$ since $x$ arises from $V_{K,S}$. It is easy to see that $X$ is nonempty as it contains, for example,
\[
x_v = a_{v,k} + \frac{-s_k}{s_{k+1} - s_k} (a_{v,k+1}-a_{v,k})
\]
where $s_i = \sum_{v\in S} a_{v,i}$. Notice that 
\begin{multline*}
\|a-x\|_1 = \sum_{i=1}^{[L:K]}\sum_{v\in S} |a_{v,i}-x_v| 
 = \sum_{v\in S} \bigg( \sum_{i=k+1}^{[L:K]}(a_{v,i}-x_v) - \sum_{i=1}^{k}(a_{v,i}-x_v)\bigg) \\ 
 = \sum_{i=1}^{[L:K]}\bigg|\sum_{v\in S} a_{v,i}\bigg| - ([L:K]-2k)\sum_{v\in S}x_v = \sum_{i=1}^{[L:K]}\bigg|\sum_{v\in S} a_{v,i}\bigg|.
\end{multline*}
Since $x=\varphi(\ovrln{\eta})$ for some $\ovrln{\eta}\in \overline{V_{K,S}}$ and $[L:\bQ]\,h_1(\bal\ovrln{\eta}^{-1}) = \|a-x\|_1$, the result will be proven if we can show that the above value is minimal for the function $F : \bR^S \ra \bR$ given by $y\mapsto \|a-y\|_1$ where we again view $y$ as a vector in $\bR^{S\times [L:K]}$ via $y_{v,i} = y_v$.

The function $F$ is clearly convex. Define for each $v$ the standard basis vector $e_v\in\bR^S$ via $(e_v)_v = 1$ and $(e_v)_w = 0$ for $w\neq v\in S$. For each $v$ with $a_{v,k}=a_{v,k+1}$, observe that if $y\in\bR^S$ satisfies $y_v=a_{v,k}$ for this particular $v$, then
\[F(y+ t\, e_v) = F(y) + |t|\quad\text{where}\quad t\in\bR.\]
Thus the function $F$ is clearly minimized along each such component for $y_v = a_{v,k}$. Let $A\subset S$ denote the set of such $v$ and let $B = S\setminus A$. If $B$ is empty then the proof is complete, so assume it is not. Consider the function $G:\bR^B \ra \bR$ defined by
\[G(y) = F\bigg(y + \sum_{v\in A} a_{v,k} e_v\bigg).\]
Notice that our vectors $x\in X$ from above arise in this fashion. We will determine the minimum of $G$ and this will in turn tell us the minimum of $F$. Let $Y$ denote the subset of $\bR^B$ defined by
\[ a_{v,k} < y_v < a_{v,k+1}\quad\text{for all}\quad v\in B,\]
and $\sum_{v\in B} y_v + \sum_{v\in A} a_{v,k} = 0$. Notice that $Y$ is an open set and that the restriction of our vector $x$ to its $B$ components lies in $Y$ so it is nonempty. Notice further that $G$ is a convex function of $\bR^B$ which is constant on the open set $Y$, therefore, $G$ is minimal on $Y$, as any convex function which is constant on an open set attains its minimum on that set. This now implies that $F$ is minimal on $X$, which completes the proof for all $1\leq k<[L:K]$.

For the remaining cases where $k=0$ or $k=[L:K]$ we make some trivial modifications to our set $X$. For the case $k=0$, we let $X\subset \overline{\varphi(V_{K,S})}\subset \bR^{[L:K]\times S}$ be given by
\[ x_v < a_{v,1}\quad\text{for all}\quad v\in S
\]
and
\[
 \sum_{v\in S} x_v = 0,
\]
where we again use $x_v$ to denote the common value of $x_{v,i}$. Now we demonstrate that $X$ is nonempty by constructing
\[
x_v = a_{v,1} - \frac{s_1}{\# S}.
\]
where $s_i = \sum_{v\in S} a_{v,i}$. In the case $k=[L:K]$ likewise we take $\sum_{v\in S} x_v = 0$ and 
\[
x_v > a_{v,[L:K]}\quad\text{for all}\quad v\in S,
\]
to define our set $X$ and observe that we have a point given by
\[x_v = a_{v,[L:K]} - \frac{s_{[L:K]}}{\# S}\]
(noting that $s_{[L:K]}\leq 0$ in this case). The remainder of the proof continues exactly as above.
\end{proof}

\begin{proof}[Proof of Theorem \ref{thm:maza-fried-mod-units}]
 This is in essence an application of the above theorem with $W$ substituted as the subspace; the primary difference is that we wish to show that in this instance, the infimum claimed is in fact attained in $W$, rather than $\overline{W}$. Suppose without loss of generality that $d_v \log \|\bal\|_v<0$ so that $a_v<0$ (for otherwise we may replace $\bal$ by $\bal^{-1}$ and the height is unaffected). Then \[s=\sum_{w|\infty} d_w \log \|\bal\|_w = \sum_{w|\infty} a_w >0.\] Let $X\subset \overline{\varphi(W)}\subset\bR^S$ (where $S=\{w\in M_K : w|\infty\}\cup\{v\}$) be the set of $x$ satisfying
\[x_v=0,\quad x_w < a_w,\quad\text{for all}\quad w|\infty,\]
and
\[\sum_{w|\infty} x_w = 0.\]
The set $X$ is nonempty as it contains
\[x_w = a_w - s/n\quad\text{for all}\quad w|\infty,\]
where $n = \#\{w\in M_K : w|\infty\}$. But then
\[
\|a-x\|_1 = |a_v| + \sum_{w|\infty} |a_w-x_w| = |a_v| + \sum_{w|\infty} (a_w-x_w)
 = |a_v| + \bigg| \sum_{w|\infty} a_w\bigg|,
\]
and the claim will follow if we can show that this value is minimal, since $[K:\bQ] h_1(\gamma) = \|\varphi(\gamma)\|_1$ for $\gamma\in V_K$. But $X$ is a nonempty open subset of $\overline{\varphi(W)}$ where the convex function $F:\varphi(W)\ra\bR$ given by $y\mapsto \|a-y\|_1$ is constant, therefore, it is the minimum of this function. Since we have an open subset of $\overline{\varphi(W)}$ clearly we have a $\bbeta\in W$ such that $y =\varphi(\bbeta)\in X$ and the proof is complete.
\end{proof}

\begin{proof}[Proof of Theorem \ref{thm:minimizing-mod-wk}]
 By Theorem \ref{thm:orig-maza-fried}, we have a set $X\subset\bR^S$ such that for $\ovrln{\eta}\in \varphi^{-1}(X)\subset \overline{V_{K,S}}$, we have the first condition that $h_1(\bal\ovrln{\eta}^{-1})$ is minimized. Our goal will be to show that if we choose $\ovrln{\eta}\in\varphi^{-1}(X)$ of minimal height, then the remaining two conditions will be satisfied. Let us determine then what the minimal height of $x=\varphi(\ovrln{\eta})\in\bR^S$ can be. In the notation of the proof of Theorem \ref{thm:orig-maza-fried}, we will assume for the moment that $1\leq k<[L:K]$ and write
\[x_v = a_{v,k} + \ep_v + \ep_v'.\]
where $\ep_v,\ep_v'\geq 0$ and
\[
  |x_v| = |a_{v,k}| - \ep_v + \ep_v'.
\]
Clearly we will have $\ep_v=0$ if $a_{v,k}\geq 0$, and likewise $\ep_v' = 0$ if $a_{v,k} + \ep_v < 0$. For a real number $t$ we will denote $t^+ = \max\{t,0\}$ and $t^- = \max\{-t,0\}$, so that $t = t^+ - t^-$ and $|t| = t^+ + t^-$. To minimize $\|x\|_1$ we want to let $\sum_v \ep_v$ be as large as possible, and it is easy to see that we must have $0 \leq \ep_v \leq \min\{a_{v,k}^-, a_{v,k+1} - a_{v,k}\}$. Let
\[ C = \sum_{v\in S} \min\{a_{v,k}^-, a_{v,k+1} - a_{v,k}\}.\]
Our proof will break into two cases. First, assume that $C\geq -\sum_v a_{v,k}$. Then clearly we can choose appropriate $\ep_v$ such that $x_v = a_{v,k} + \ep_v\in [a_{v,k}, a_{v,k+1}]$ for each $v$ and $\sum_v x_v = 0$, or equivalently, that $\sum_v \ep_v = -\sum_v a_{v,k}$, so that
\[
 \|x\|_1 = \sum_{v} |a_{v,k}| - \sum_v \ep_v = \sum_v 2a_{v,k}^+,
\]
and $\|x\|_1$ is clearly minimal. Now for the second case, assume that $C\leq -\sum_v a_{v,k}$. Again, in order to minimize $\|x\|_1$ we want to let $\sum_v \ep_v$ be as large as possible; this is by construction $C$. But we require
\[\sum_v (\ep_v+ \ep_v') = -\sum_v a_{v,k} (\geq 0)\]
in order to have $\sum_v x_v = 0$, so this implies that we will need $\ep_v'$, precisely such that
\[ \sum_v \ep_v' = -\sum_v a_{v,k} -\sum_v \ep_v = -\sum_v a_{v,k} - C. \]
Then clearly
\[\|x\|_1 = \sum_{v} |a_{v,k}| - \sum_v \ep_v + \sum_v \ep_v' = \sum_{v} |a_{v,k}| - \sum_v a_{v,k} - 2C\]
is the minimal height. To prove the third property we need to simplify this expression, so we will now evaluate $C$.
Suppose $\min\{a_{v,k}^-, a_{v,k+1} - a_{v,k}\} = a_{v,k}^-$. Then $a_{v,k+1}\geq 0$, and
\[\min\{a_{v,k}^-, a_{v,k+1} - a_{v,k}\}=a_{v,k}^-=a_{v,k}^- - a_{v,k+1}^-.\]
Now, suppose $\min\{a_{v,k}^-, a_{v,k+1} - a_{v,k}\} = a_{v,k+1} - a_{v,k}$. Then we must have $a_{v,k}\leq a_{v,k+1}\leq 0$, and so
\[\min\{a_{v,k}^-, a_{v,k+1} - a_{v,k}\}=a_{v,k+1} - a_{v,k} = a_{v,k}^- - a_{v,k+1}^-.\]
Thus in general $\min\{a_{v,k}^-, a_{v,k+1} - a_{v,k}\}=a_{v,k}^- - a_{v,k+1}^-$. So now let us continue evaluating $\|x\|_1$ in the case $C\leq -\sum_v a_{v,k}$:
\[\|x\|_1 = \sum_{v} |a_{v,k}| - \sum_v a_{v,k} - 2C 
=  \sum_{v}2a_{v,k}^- - 2\sum_v (a_{v,k}^- - a_{v,k+1}^-) 
 = \sum_{v}2a_{v,k+1}^-.\]
We remark in passing that the condition $C\geq -\sum_v a_{v,k}$ is equivalent to $\sum_v a_{v,k}^+\geq \sum_{v}a_{v,k+1}^-$, so in fact we can express the minimal height of $x$ in both cases as
\[\|x\|_1 = \max\bigg\{\sum_{v} 2a_{v,k}^+ , \sum_{v}2a_{v,k+1}^-\bigg\}.\]

Using such a minimal $\ovrln{\eta} = \varphi^{-1}(x)\in\overline{V_{K,S}}$ we see that the first two claims are satisfied. It remains to show that the third claim is true, specifically, that
\[
 h_1(\ovrln{\eta}) + [L:K]h_1(\bal\ovrln{\eta}^{-1})\leq [L:K] h_1(\bal). 
\]
Translated into the appropriate $L^1$-norms, this claim is equivalent to:
\[\|x\|_{L^1(\bR^S)} + \|a-x\|_{L^1(\bR^{[L:K]\times S})} \leq \|a\|_{L^1(\bR^{[L:K]\times S})}.\]
Where in the term $\|a-x\|_{L^1(\bR^{[L:K]\times S})}$ we view $x$ as a vector in $\bR^{[L:K]\times S}$ via $x_{v,i} = x_v$ for all $i$. Writing this expression out, we have
\[
 \sum_v |x_v| + \sum_{i=1}^{[L:K]}\bigg| \sum_v a_{v,i}\bigg| \leq \sum_{i=1}^{[L:K]} \sum_v |a_{v,i}|,
\]
equivalently, rearranging these terms,
\begin{equation}\label{eqn:xv-ineq}
 2\max\bigg\{\sum_{v} a_{v,k}^+ , \sum_{v}a_{v,k+1}^-\bigg\}\leq 2\sum_v \bigg( \sum_{i=1}^{k} a_{v,i}^+ 
+ \sum_{i=k+1}^{[L:K]} a_{v,i}^- \bigg),
\end{equation}
which is clearly true and completes the proof for the cases $1\leq k<[L:K]$. For the remaining cases, observe that for $k=0$ we have $x_v<a_{v,1}$ and thus it is easy to see that our minimal height is
\[
\sum_v |x_v| = \sum_v 2a_{v,1}^-
\]
and since the right hand side of \eqref{eqn:xv-ineq} holds for $k=0$, the inequality still holds. The $k=[L:K]$ case is similar, as $a_{v,[L:K]} < x_v$ implies our minimal height is
\[
 \sum_v |x_v| = \sum_v 2a_{v,[L:K]}^+.\qedhere
\]
\end{proof}

\subsection{$S$-unit projections and proof of Theorem \ref{thm:extremal-infimum}}
Let $K$ be a finite Galois extension of $\bQ$. We denote the set of places of $K$ by $M_K$. We normalize our absolute values by letting $\|\cdot\|_v$ be the absolute value which extends $|\cdot|_p$ for the rational prime $p$ such that $v|p$, and let $|\cdot|_v=\|\cdot\|_v^{[K_v:\bQ_v]/[K:\bQ]}$. Denote by $S$ a finite set of places to be fixed later which includes all of the archimedean places. Let $O_K$ be the ring of algebraic integers of $K$ and let $U_S$ be the group of $S$-units of $K$. Since $S$ is finite and contains the archimedean places, we know by Dirichlet's $S$-unit theorem that $U_S$ is a free abelian group of finite rank $s=\# S-1$. Recall that the class group is the group of nonzero fractional ideals of $K$ modulo principal ideals. It is well-known that for number fields, the class group of a number field has a finite order, and we will denote the order of the class group of $K$ by $h$. It follows immediately that if for some finite place $v\in M_K$ the ideal
\[
 \cP_v = \{\al\in K : \|\al\|_v<1\}\subset O_K
\]
is not principal, then
\begin{equation}\label{eqn:power-is-principal}
 \cP_v^h = (\al)\subset O_K
\end{equation}
is a principal ideal of $O_K$, since the class of $\cP_v^h$ is trivial in the class group.

The goal of this section is to construct a projection $P_S : V_K\ra V_{K,S}$ which will be instrumental in the proof of the main theorem. Let $S$ consist of the following places of $K$:
\begin{enumerate}
 \item The archimedean places of $K$.
 \item The support of $\bal$ (all places where $\bal$ has nontrivial valuation).
 \item The Galois conjugates of the above places under the natural action $\|\cdot \|_{\sigma v} = \|\sigma^{-1}(\cdot)\|_v$.
\end{enumerate}
It is clear that $S$ is finite. We now proceed to associate a generator to each place outside of $S$:
\begin{lemma}\label{lemma:alpha-v-exists}
 For any $v\in M_K\setminus S$, we can find $\bal_v\in V_K$ such that
\begin{enumerate}
 \item $\|\bal_v\|_v<1$,
\item $\|\bal_v\|_w = 1$ for all $w\in M_K\setminus S$ with $w\neq v$, and 
\item $\|\bal_v\|_w \geq 1$ for all $w\in S$.
\item $h_1(\bal_v) = \inf_{\bbeta \in V_{K,S}} h_1(\bal_v/\bbeta)$.
\end{enumerate}
\end{lemma}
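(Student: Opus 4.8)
The statement is about constructing, for each place $v \notin S$, an element $\bal_v \in V_K$ that behaves like a "uniformizer concentrated at $v$" — a power of a generator of (a power of) the prime ideal $\cP_v$ — but adjusted by an $S$-unit so that it is also height-minimizing modulo $V_{K,S}$. So the first step is to produce a rough candidate: by \eqref{eqn:power-is-principal}, $\cP_v^h = (\gamma)$ for some $\gamma \in O_K$, and I would take $\bgamma \in V_K$ to be the class of $\gamma$. This $\bgamma$ satisfies $\|\bgamma\|_v < 1$ and $\|\bgamma\|_w = 1$ for every finite place $w \notin S$ with $w \neq v$ (since $(\gamma)$ is supported only at $v$ as a fractional ideal away from $S$ — here one uses that $v \notin S$ so $\cP_v$ is coprime to everything in $S$, and $\gamma$ is an algebraic integer so has no poles). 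The only places where $\|\bgamma\|_w \neq 1$ are then $v$ and the places of $S$. That handles properties (1) and (2) for the candidate $\bgamma$; properties (3) and (4) will be arranged by multiplying $\bgamma$ by a suitable element of $V_{K,S}$, which does not change $\|\cdot\|_w$ at any $w \notin S$ and hence preserves (1) and (2).

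**Arranging (4), then (3).** For property (4), I would invoke the quotient-norm machinery already established: $V_{K,S}$ is finite dimensional by Proposition \ref{prop:WS-is-finite-diml}, so its completion $\overline{V_{K,S}}$ is a finite-dimensional real normed space, and the infimum $\inf_{\bbeta \in V_{K,S}} h_1(\bgamma/\bbeta)$ is the distance from (the image of) $\bgamma$ to a finite-dimensional subspace. Because $V_{K,S}$ is dense in $\overline{V_{K,S}}$ and the $h_1$-distance function is continuous, this infimum is approached; but I actually want it attained by an honest $\bal_v \in V_K$ satisfying all four conditions simultaneously. The cleanest route is Theorem \ref{thm:maza-fried-mod-units}: $\bgamma$ has nonzero support only at the archimedean places and the single finite place $v$, which is exactly the hypothesis of that theorem (with $W$ the unit subspace, or more generally its $S$-unit analogue). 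That theorem gives a $\bbeta$ in the relevant subspace, living in $V_K$ (not merely the completion), achieving the infimum, and moreover gives the explicit value $\frac{1}{[K:\bQ]}\bigl(|d_v \log\|\bgamma\|_v| + |\sum_{w\mid\infty} d_w \log\|\bgamma\|_w|\bigr)$. Set $\bal_v = \bgamma\bbeta^{-1}$; this satisfies (1), (2), (4). For (3), note that by the product formula $\bal_v$ has a single finite place $v$ with $\|\bal_v\|_v < 1$ and is a unit outside $S \cup \{v\}$, so $\sum_{w \in S}$ (weighted) $\log\|\bal_v\|_w = -d_v\log\|\bal_v\|_v > 0$; the minimizer produced by Theorem \ref{thm:maza-fried-mod-units} is precisely the one that pushes all the archimedean valuations down so that $\|\bal_v\|_w \le a_w$ — but I need them $\ge 1$, i.e. I should run the argument with the correct sign, replacing $\bgamma$ by $\bgamma^{-1}$ if necessary and re-examining the explicit minimizer $x_w = a_w - s/n$ from the proof of that theorem; the sign choice there is exactly what forces $\|\bal_v\|_w \ge 1$ at the archimedean $w$, while $\|\bal_v\|_w = 1$ at the finite $w \in S$ is automatic since $\cP_v$ and the finite places of $S$ are coprime and $\bbeta$ can be taken to be a unit.

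**The main obstacle.** The delicate point is not the existence of a height-minimizing representative — that is handed to us by Theorems \ref{thm:orig-maza-fried} and \ref{thm:maza-fried-mod-units} — but rather reconciling \emph{all four} conditions at once: the ideal-theoretic normalization (being essentially a power of a generator of $\cP_v$, which pins down the finite places) must be compatible with the sign/ordering normalization at the archimedean places demanded by (3), and with the height-minimality of (4). Concretely, I must check that the explicit minimizer from the proof of Theorem \ref{thm:maza-fried-mod-units} — which has the form "decrease each archimedean $a_w$ by $s/n$" — lands in the region $\|\bal_v\|_w \ge 1$ rather than $\le 1$; this is a matter of getting the sign of $\log\|\bgamma\|_v$ right and possibly replacing $\bgamma$ by $\bgamma^{-1}$ at the outset. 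I also need to confirm that multiplying by the $S$-unit $\bbeta$ genuinely leaves (1) and (2) untouched — immediate, since $\bbeta$ has all valuations equal to $1$ outside $S$ — and that the modification to handle a \emph{non-principal} $\cP_v$ (taking the $h$-th power $\cP_v^h = (\gamma)$ and then the class of $\gamma^{1/h} \in V_K$, using that $V_K$ is a $\bQ$-vector space so roots are available) does not disturb any of this, which it does not because rescaling by a positive rational scales all $\log\|\cdot\|_w$ uniformly and hence preserves signs, the support, and height-minimality up to the same positive scalar.
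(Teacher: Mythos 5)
Your overall strategy coincides with the paper's: take a generator of $\cP_v^h$, pass to its $h$-th root $\bbeta\in V_K$ (supported only at $v$ among finite places and with $\|\bbeta\|_v<1$), then use Theorem~\ref{thm:maza-fried-mod-units} to multiply by a unit-subspace element attaining the infimum, and set $\bal_v$ to be the result. Conditions (1), (2) are unaffected because $S$-units are trivial outside $S$, and (4) follows from the theorem (both you and the paper quietly pass from the infimum over the unit subspace $W$ to the infimum over $V_{K,S}$; this does need a remark, e.g.\ via the product-formula lower bound $h_1(\bbeta\bgamma^{-1})\geq 2|\log|\bbeta|_v|$ for all $\bgamma\in V_{K,S}$, which matches the value from the theorem).

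Where you differ, and where your write-up is shakier, is condition (3). The paper's argument is clean and independent of the particular minimizer: it observes that the explicit value from Theorem~\ref{thm:maza-fried-mod-units} forces $\sum_{w\in S}|\log|\beta\eta|_w|=\bigl|\sum_{w\in S}\log|\beta\eta|_w\bigr|$, hence the quantities $\log|\beta\eta|_w$ for $w\in S$ all share a sign; and since $\log|\beta|_v<0$, the product formula makes that sign nonnegative. Your argument instead tries to read off the sign from the explicit point $x_w=a_w-s/n$ in the proof of Theorem~\ref{thm:maza-fried-mod-units}, which does work for that particular choice, but your aside about ``replacing $\bgamma$ by $\bgamma^{-1}$ if necessary'' is a red herring and in fact dangerous: inverting $\bgamma$ would give $\|\bgamma^{-1}\|_v>1$, destroying condition (1). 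It is not needed --- the construction from $\cP_v^h$ already produces $\|\bgamma\|_v<1$ --- but you should recognize that this sign is forced, not free to normalize. With that caveat repaired, your proof lands in the same place as the paper's.
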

\begin{proof}
If $\cP_v=\{\al\in K : \|\al\|_v<1\}\subset O_K$ is a principal ideal, then let $\beta$ be a generator. Otherwise, let $\cP_v^h = (\al)$ as in \eqref{eqn:power-is-principal} and let $\bbeta = \bal^{1/h}\in V_K$. Clearly, $\bbeta$ has a nontrivial finite valuation only at $v$ of $\|\bbeta\|_v=p^{-1/e}$, where $e$ is the ramification index of $v|p$. By Theorem \ref{thm:maza-fried-mod-units} above, we can find $\ovrln{\eta}\in V_{K,S}$ such that
\begin{align*}
 h_1(\bbeta\ovrln{\eta}) &= \sum_{w\in M_K} |\log |\beta\eta|_w| \\
&= \sum_{w\in M_K\setminus S} |\log |\beta|_w| + \sum_{w\in S} |\log |\beta\eta|_w|\\
&= |\log |\beta|_v| + \bigg|\sum_{w\in S} \log |\beta\eta|_w\bigg|\\
&= |\log |\beta|_v| + \bigg|\sum_{w\in S} \log |\beta|_w\bigg|.
\end{align*}
That we have equality above implies that either $\log |\beta\eta|_w\geq 0$ for all $w\in S$ or $\log |\beta\eta|_w\leq 0$ for all $w\in S$. By our choice of  $\beta$ we have $\log |\beta|_v<0$, and hence, by the product formula, all of the $S$ valuations of $\beta\eta$ must be nonnegative. We therefore can choose $\bal_v=\bbeta\ovrln{\eta}$ and we are done.
\end{proof}

Let $v\in M_K$ and suppose $v|p$ for the rational prime $p$. Let $G=\Gal(\Qbar/\bQ)$ be the absolute Galois group, and let
\[
 H = \Stab_G(v)
\]
be the decomposition group associated to the finite place $v$. Let $\bal\in V_K$ and take $\{\sigma_1, \ldots, \sigma_k\}$ to be a set of right coset representatives for $\Stab_H(\bal)$ in $H$ (where $k=[H:\Stab_H(\bal)]$) and then define $P_H : V_K\ra V_K$ to be
\[P_H \bal = \left(\sigma_1(\bal)\cdots \sigma_k(\bal)\right)^{1/k}.\]
Then by Proposition \ref{prop:PK-continuous-wrt-Weil-norm}, $P_H$ is a projection to $V_F\subseteq V_K$, for $F\subseteq K$ the fixed field of $H$, of operator norm one with respect to the Weil $p$-height $h_p$ for $1\leq p\leq\infty$.
We will now construct a system of $\bal_v$ for each place $v\in M_K\setminus S$.
\begin{lemma}\label{lemma:alpha-v-system}
There exists a set $\{\bal_v\in V_K : v\in M_K\setminus S\}$ such that each $\bal_v$ satisfies the conditions of Lemma \ref{lemma:alpha-v-exists} above with the following additional property: for any $w\in S$ and $\sigma\in G$, if $\|\al_v\|_w \neq \|\al_v\|_{\sigma w}$ then $\sigma v\neq v$.
\end{lemma}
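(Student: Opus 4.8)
The plan is to take the elements furnished by Lemma~\ref{lemma:alpha-v-exists} and symmetrize each of them under the appropriate decomposition group. Fix a place $v\in M_K\setminus S$ and let $\bal_v'\in V_K$ be an element satisfying conditions (1)--(4) of Lemma~\ref{lemma:alpha-v-exists}. Put $H=\Stab_G(v)$ and let $F\subseteq K$ be its fixed field, so that $P_H=P_F$ projects onto $V_F$ and $H\subseteq\Gal(\Qbar/F)$. Set $\bal_v:=P_H\bal_v'$. If $\sigma_1,\dots,\sigma_k\in H$ are the coset representatives of $\Stab_H(\bal_v')$ used to build $P_H$, then for every place $w$ of $K$,
\[
 \|\bal_v\|_w=\prod_{i=1}^k\|\bal_v'\|_{\sigma_i^{-1}w}^{1/k}.
\]

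First I would verify that $\bal_v$ still satisfies (1)--(4) of Lemma~\ref{lemma:alpha-v-exists}. Two structural facts do all the work: the set $S$ was chosen to be stable under the Galois action, so $\sigma_i^{-1}w\in S$ if and only if $w\in S$; and every $\sigma_i$ lies in $\Stab_G(v)$, so $\sigma_i^{-1}w=v$ forces $w=v$. From the displayed formula we then get $\|\bal_v\|_v=\|\bal_v'\|_v<1$; that for $w\in M_K\setminus S$ with $w\neq v$ each factor $\|\bal_v'\|_{\sigma_i^{-1}w}$ equals $1$; and that for $w\in S$ each factor is $\ge1$. Thus $\bal_v$ has the same support and sign profile as $\bal_v'$: its support lies among the archimedean places and $v$, with $\|\bal_v\|_v<1$ and $\|\bal_v\|_w\ge1$ for all $w\in S$. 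Condition (4) then holds for any $\bgamma\in V_K$ with this profile by the product-formula argument in the proof of Lemma~\ref{lemma:alpha-v-exists} (for $\bbeta\in V_{K,S}$ one has $\|\bbeta\|_w=1$ off $S$ and $\sum_{w\in S}\log\|\bbeta\|_w=0$, so that $h_1(\bgamma\bbeta^{-1})\ge 2\,|\log|\gamma|_v|=h_1(\bgamma)$, with equality at $\bbeta=\boldsymbol{1}$).

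Next I would establish the new symmetry property. Since $\bal_v\in V_F$, its minimal field is contained in $F$, so by Lemma~\ref{lemma:Stab-min-field} the subgroup $\Gal(\Qbar/F)$, and hence $H$, fixes $\bal_v$ in $\cG$. Therefore, for all $w\in S$ and $\sigma\in H$,
\[
 \|\bal_v\|_{\sigma w}=\|\sigma^{-1}\bal_v\|_w=\|\bal_v\|_w.
\]
The contrapositive is exactly what is claimed: if $\|\al_v\|_w\neq\|\al_v\|_{\sigma w}$ for some $w\in S$ and $\sigma\in G$, then $\sigma\notin\Stab_G(v)$, i.e.\ $\sigma v\neq v$. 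Carrying this out for every $v\in M_K\setminus S$ produces the required system $\{\bal_v\}$.

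The only point requiring real care --- the rest being bookkeeping --- is that replacing $\bal_v'$ by its symmetrization $P_H\bal_v'$ could a priori wreck condition (4) of Lemma~\ref{lemma:alpha-v-exists}, which is not visibly preserved by an arbitrary norm-one projection. This is dissolved by the observation above that conditions (1)--(4) depend only on the support and sign profile of $\bal_v$ at the places of $S\cup\{v\}$, and that profile survives $P_H$ precisely because $S$ is Galois-stable and $H$ fixes $v$.
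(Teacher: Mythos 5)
Your proposal is correct and uses the same key idea as the paper: symmetrize each $\bal_v'$ from Lemma~\ref{lemma:alpha-v-exists} by the projection $P_H$ for $H=\Stab_G(v)$, then observe that $H$ fixes $\bal_v\in V_F$ and hence fixes its $w$-adic valuations for $w\in S$. The only cosmetic differences are that you re-derive condition~(4) directly from the product formula (the paper instead notes $P_H\bal_v'/\bal_v'\in V_{K,S}$ and uses the operator-norm-one property plus minimality of $\bal_v'$), and that you run the construction independently at each $v$ rather than Galois-conjugating one $\bal_v$ to the other places above the same rational prime; both handle these details soundly.
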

\begin{proof}
For each rational prime $p$ which has a place in $M_K\setminus S$ lying above it, pick one particular place $v|p$ lying above it. Choose $\bal_v'$ to be the number constructed by Lemma \ref{lemma:alpha-v-exists} above, and let $\bal_v = P_H \bal_v'$ where $H=\Stab_G(v)$ is the stabilizer of the place $v$ in the absolute Galois group as above. Notice that by the fact that $P_H$ has norm one and by minimality modulo $V_{K,S}$ of $\bal_v'$ in Lemma \ref{lemma:alpha-v-exists}, $h_1(P_H \bal_v') = h_1(\bal_v')$. Since $S$ is closed under the Galois action, and $H$ fixes the place $v$, $\bal_v$ still satisfies the criteria of Lemma \ref{lemma:alpha-v-exists}. For any other place $w|p$ lying above the same rational prime $p$, observe that there exists $\sigma\in G$ with $\sigma v = w$. Define $\bal_w = \sigma^{-1}(\bal_v)$, and repeat this construction for every rational prime $p$ whose extensions to $K$ lie in $M_K\setminus S$. This gives us the entire set of $\bal_v$ whose existence we need to establish, and since the Galois action permutes the places $v$ lying over $p$, the $\bal_v$ thus constructed all meet the conditions of Lemma \ref{lemma:alpha-v-exists}. 

It now remains to see that this set satisfies the additional property claimed. This is guaranteed by the ``averaging'' over $H$ done by $P_H$ in constructing the original $\bal_v$ whose orbit we took in the above construction. Observe that if $\sigma\in G$ fixes the $v$-adic valuation of $\bal_v$, then $\sigma\in H$.  Let $F\subseteq K$ denote the fixed field of $H$ and view $P_H$ as the projection to $V_F$.  Then $\bal_v\in V_F$ is some power of an element of $F^\times/\Tor(F^\times)$, so by linearity, we have  $\sigma\bal_v=\bal_v$.
Thus we see that for such $\sigma\in G$, $\|\sigma \bal_v\|_w = \|\bal_v\|_{\sigma^{-1} w}$ unless $\sigma v\neq v$, in which case we have the desired conclusion.
\end{proof}

\begin{cor}
 For $v|p$ and $\bal_v$ in the set as constructed in Lemma \ref{lemma:alpha-v-system}, $\delta(\bal_v)$ is precisely the number of places of $K$ which lie over $p$.
\end{cor}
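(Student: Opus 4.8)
The plan is to show that $\Stab_G(\bal_v)$ is exactly the decomposition group $H=\Stab_G(v)$ and then read off the count by an orbit--stabilizer argument. First I would recall that, by the definition of $\delta$ in \eqref{eqn:delta-defn} together with Lemma \ref{lemma:Stab-min-field}, we have $\delta(\bal_v)=[G:\Stab_G(\bal_v)]$ and $\Stab_G(\bal_v)=\Gal(\Qbar/K_{\bal_v})$; so the whole statement reduces to the identity $\Stab_G(\bal_v)=H$.

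One inclusion is immediate from the construction in Lemma \ref{lemma:alpha-v-system}: since $\bal_v=P_H\bal_v'$ lies in $V_F$, where $F$ is the fixed field of $H=\Gal(\Qbar/F)$, and every $\tau\in H$ fixes $F^\times/\Tor(F^\times)$ and hence all of $V_F$ pointwise, we get $H\subseteq\Stab_G(\bal_v)$. For the reverse inclusion I would use the support properties of $\bal_v$ from Lemma \ref{lemma:alpha-v-exists} (preserved by $P_H$, as noted in the proof of Lemma \ref{lemma:alpha-v-system}): among all places $w$ of $K$, the place $v$ is the \emph{unique} one at which $\|\bal_v\|_w<1$, because $\|\bal_v\|_w\geq 1$ for $w\in S$, $\|\bal_v\|_w=1$ for $w\in M_K\setminus S$ with $w\neq v$, and $v\in M_K\setminus S$ has $\|\bal_v\|_v<1$. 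Hence if $\sigma\in\Stab_G(\bal_v)$ then $\|\bal_v\|_{\sigma v}=\|\sigma^{-1}\bal_v\|_v=\|\bal_v\|_v<1$, which forces $\sigma v=v$, i.e.\ $\sigma\in H$. This gives $\Stab_G(\bal_v)=H$, so $K_{\bal_v}=F$ and $\delta(\bal_v)=[G:H]=[F:\bQ]$.

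To finish, I would invoke the standard fact that, $K/\bQ$ being Galois, $G$ acts transitively on the set of places of $K$ above $p$; by orbit--stabilizer this set has cardinality $[G:\Stab_G(v)]=[G:H]$, which we have just identified with $\delta(\bal_v)$. I do not expect a genuine obstacle here: the only subtleties are the bookkeeping points that $v\notin S$ and that the inequality $\|\bal_v\|_v<1$ is not destroyed by the projection $P_H$, and both are already contained in the statements of Lemmas \ref{lemma:alpha-v-exists} and \ref{lemma:alpha-v-system}.
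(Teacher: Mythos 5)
Your proposal is correct and follows essentially the same two-step argument as the paper: show $\Stab_G(\bal_v)\subseteq H=\Stab_G(v)$ via the fact that $v$ is the unique place with $\|\bal_v\|_v<1$, and show the reverse inclusion via $\bal_v\in V_F$ for $F$ the fixed field of $H$ (which is exactly what the paper invokes from the proof of Lemma \ref{lemma:alpha-v-system}). You are merely slightly more explicit than the paper in spelling out the orbit--stabilizer count at the end.
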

\begin{proof}As seen in the proof, if $\sigma(\bal_v)\neq\bal_v$, then $\sigma v\neq v.$ While, if $\sigma(\bal_v)=\bal_v$, then $1>\|\bal_v\|_v=\|\sigma(\bal_v)\|_v=\|\bal_v\|_{\sigma^{-1}v}$, which gives $\sigma v=v.$ 
\end{proof}
We are now ready to construct the projection $P_S : V_K\ra V_{K,S}$ which is fundamental to the proof of Theorem \ref{thm:extremal-infimum}.

\begin{prop}\label{prop:s-unit-projection}
 There exists a projection $P_S : V_K\ra V_{K,S}$ which satisfies the following properties:
\begin{enumerate}
 \item $h_1(P_S \bal) \leq h_1(\bal)$, so $\|P_S\|=1$ with respect to the Weil height norm, and
 \item $\delta(P_S\bal) \leq \delta(\bal)$, and thus $\|P_S\|=1$ with respect to the Mahler norm.
\end{enumerate}
\end{prop}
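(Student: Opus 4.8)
The plan is to realize $P_S$ explicitly, using the distinguished system $\{\bal_v : v\in M_K\setminus S\}$ produced by Lemma~\ref{lemma:alpha-v-system}. Given $\bal\in V_K$, which has nontrivial valuation at only finitely many places outside $S$, for each such $v\mid p$ there is a unique rational number $r_v=r_v(\bal)$ with $\|\bal\,\bal_v^{-r_v}\|_v=1$ (rational because $\log\|\bal\|_v$ and $\log\|\bal_v\|_v$ are both rational multiples of $\log p$), and we set
\[
 P_S\bal \;=\; \bal\cdot\prod_{v\in M_K\setminus S}\bal_v^{-r_v(\bal)}.
\]
Since $\bal_v$ has trivial valuation at every place of $M_K\setminus S$ other than $v$ by Lemma~\ref{lemma:alpha-v-exists}(2), this product is finite and $P_S\bal$ has trivial valuation everywhere outside $S$; as $P_S\bal\in V_K$ as well, we get $P_S\bal\in V_{K,S}$. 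The assignment $\bal\mapsto r_v(\bal)$ is $\bQ$-linear, being a ratio of logarithms, so $P_S$ is $\bQ$-linear; and $r_v(\bal)=0$ for every $v\notin S$ when $\bal\in V_{K,S}$, so $P_S$ restricts to the identity on $V_{K,S}$ and hence $P_S^2=P_S$.

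For property~(1) I would peel off the factors $\bal_v$ one at a time and show that no single step increases $h_1$, so it suffices to prove that if $\bal_v$ is supported only at $v$ and at places of $S$ (with $v\notin S$) and $r_v$ is chosen so that $\|\bal\,\bal_v^{-r_v}\|_v=1$, then $h_1(\bal\,\bal_v^{-r_v})\le h_1(\bal)$. Writing heights as sums $\sum_w \big|\log|\cdot|_w\big|$ over all places in the normalized absolute values, one computes
\[
 h_1(\bal\,\bal_v^{-r_v})-h_1(\bal)=\sum_{w\in S}\Big(\big|\log|\bal|_w-r_v\log|\bal_v|_w\big|-\big|\log|\bal|_w\big|\Big)-\big|\log|\bal|_v\big|,
\]
which by the triangle inequality is at most $|r_v|\sum_{w\in S}\big|\log|\bal_v|_w\big|-\big|\log|\bal|_v\big|$. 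The crux is that Lemma~\ref{lemma:alpha-v-exists}(1),(3) together with the product formula for $\bal_v$ force the exact identity $\sum_{w\in S}\big|\log|\bal_v|_w\big|=\big|\log|\bal_v|_v\big|$: the only valuation of $\bal_v$ outside $S$ is at $v$ (where it is $<1$), all of its $S$-valuations are $\ge 1$, and the sum of $\log|\bal_v|_w$ over all places vanishes. Since $|r_v|\,\big|\log|\bal_v|_v\big|=\big|\log|\bal|_v\big|$ by the choice of $r_v$, the displayed difference is $\le 0$. One then checks that trivializing the valuations at distinct $v\notin S$ successively is consistent — dividing by $\bal_{v'}$ neither disturbs an already-trivialized valuation at $v$ nor changes the $v'$-valuation defining $r_{v'}$, again by Lemma~\ref{lemma:alpha-v-exists}(2) — so iterating gives $h_1(P_S\bal)\le h_1(\bal)$.

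For property~(2) I would use the Galois equivariance $\sigma(\bal_v)=\bal_{\sigma v}$ of the system, which is exactly what the averaging by $P_H$ in Lemma~\ref{lemma:alpha-v-system} provides: it makes $\bal_v$ fixed by $\Stab_G(v)$, so $\bal_{\sigma v}:=\sigma(\bal_v)$ is well defined. If $\sigma\in\Stab_G(\bal)$, applying $\sigma$ to the product defining $P_S\bal$ and reindexing $v\mapsto\sigma v$ gives $\sigma(P_S\bal)=\bal\cdot\prod_v\bal_v^{-r_{\sigma^{-1}v}(\bal)}$, and $r_{\sigma^{-1}v}(\bal)=r_v(\bal)$ because $\|\bal\|_{\sigma^{-1}v}=\|\bal\|_v$ and $\|\bal_{\sigma^{-1}v}\|_{\sigma^{-1}v}=\|\bal_v\|_v$; hence $\sigma(P_S\bal)=P_S\bal$. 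Thus $\Stab_G(\bal)\subseteq\Stab_G(P_S\bal)$, so $\delta(P_S\bal)=[G:\Stab_G(P_S\bal)]\le[G:\Stab_G(\bal)]=\delta(\bal)$. Finally, (1) and (2) yield $\|P_S\|\le 1$ in the Weil and Mahler norms, while $\|P_S\|\ge 1$ since $P_S$ is the identity on the nonzero subspace $V_{K,S}$, so $\|P_S\|=1$ in each.

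The substantive step is the height estimate in~(1); everything else is bookkeeping. Inside it the delicate points are the consistency of removing one place at a time and, above all, extracting the sharp equality $\sum_{w\in S}\big|\log|\bal_v|_w\big|=\big|\log|\bal_v|_v\big|$ from the positivity in Lemma~\ref{lemma:alpha-v-exists}(3) and the product formula — it is precisely this equality that makes the triangle-inequality bound tight enough to close the argument.
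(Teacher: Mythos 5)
Your proposal is correct and, for property (1), is essentially the paper's argument phrased iteratively: the paper treats the whole correction factor $\prod_{v} \bal_v^{-n_v(\bal)}$ in a single triangle-inequality estimate, while you peel off one place at a time; in both cases the crux is exactly your identity $\sum_{w\in S}|\log|\bal_v|_w|=|\log|\bal_v|_v|$ coming from the product formula together with the positivity from Lemma~\ref{lemma:alpha-v-exists}(3). Your checks that the steps are independent of order and that $r_v(\bal^{(j)})=r_v(\bal)$ throughout are the correct way to make the iterative version rigorous.

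For property (2) you take a genuinely cleaner route than the paper. The paper argues by contradiction: assuming $\sigma\bal=\bal$ but $\sigma(P_S\bal)\ne P_S\bal$, it traces a discrepancy at some $w\in S$ back through the weaker stated property of Lemma~\ref{lemma:alpha-v-system} (``if $\|\bal_v\|_w\ne\|\bal_v\|_{\sigma w}$ then $\sigma v\ne v$'') to produce distinct valuations of $\bal$ at $v$ and $\sigma v$, contradicting $\sigma\bal=\bal$. You instead observe that the construction in Lemma~\ref{lemma:alpha-v-system} yields the stronger, fully Galois-equivariant property $\sigma(\bal_v)=\bal_{\sigma v}$ (with $\bal_v$ fixed by $\Stab_G(v)$ thanks to the $P_H$-averaging, making $\bal_{\sigma v}$ well-defined), and that $r_{\sigma^{-1}v}(\bal)=r_v(\bal)$ whenever $\sigma\bal=\bal$; hence $\sigma(P_S\bal)=P_S\bal$ and $\Stab_G(\bal)\subseteq\Stab_G(P_S\bal)$ directly. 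This is more transparent and avoids the case analysis in the paper's contradiction. One caution worth flagging: you are using the full equivariance $\sigma(\bal_v)=\bal_{\sigma v}$, which is stronger than the property actually \emph{stated} in Lemma~\ref{lemma:alpha-v-system}; it does follow from the construction there (and you correctly invoke the $P_H$-averaging), but if one wanted to cite only the lemma as stated rather than its proof, one would need to either strengthen the lemma's conclusion or fall back on the paper's weaker-hypothesis argument.
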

\begin{proof}
 For our given $S$, let $\{\bal_v : v\in M_K\setminus S\}$ be the set constructed by Lemma \ref{lemma:alpha-v-system}. For each $v\in M_K\setminus S$, define the map $n_v : V_K\ra \bQ$ via the requirement that
\[
 \left\|\bbeta\bal_v^{-n_v(\bbeta)}\right\|_v = 1\quad\text{for all}\quad \bbeta\in V_K.
\]
It is easy to see that such a value for $n_v$ must exist and be unique, since the $v$-adic valuations are discrete.\footnote{The reader will note that by our choice of $\bal_v$, the function $n_v(\cdot)$ is essentially the linear extension of $\ord_v(\cdot)$ from $K^\times/\Tor(K^\times)$ to $V_K$.} Further, observe that
\[
 n_v(\bbeta\ovrln{\gamma}) = n_v(\bbeta) + n_v(\ovrln{\gamma})\quad\text{for all}\quad \bbeta,\ovrln{\gamma}\in V_K.
\]
Define the map
\begin{align*}
 P_S : V_K &\ra V_{K,S} \\
     \bal &\mapsto \bal \prod_{v\in M_K\setminus S} {\bal_v}^{-n_v(\bal)}
\end{align*}
That this is well-defined follows from the fact that $n_v(\bal)=0$ for all but finitely many $v$ and from the fact that by our choice of $\bal_v$ and $n_v(\bal)$, $P_S \bal$ has support only in $S$ and thus belongs to the $\bQ$-vector space span of the $S$-units $V_{K,S}$. 

We will now prove that $P_S$ satisfies the first desired property. Fix our $\bal\in V_K$ and let $\bbeta=P_S\bal\in V_{K,S}$. Let $T$ denote the Galois orbit of $\supp(\bal)\setminus S$ inside $M_K$. The claim is then that
\[
 h_1(\bbeta)\leq h_1\bigg(\bbeta\prod_{v\in T} \bal_v^{n_v}\bigg)=h_1(\bal),
\]
 where we will suppress the argument in the exponents $n_v=n_v(\bal)$. Denote $S' = M_K\setminus S$. Then
 \begin{equation}\label{eqn:h-P-alpha}
 h_1(\bbeta) = \sum_{w\in S} \left| \log |\bbeta|_w \right| + \sum_{w\in S'} \left| \log |\bbeta|_w \right|.
\end{equation}
Now, $\sum_{w\in S'} \left| \log |\bbeta|_w \right| = 0$, since $\bbeta\in V_{K,S}$. We apply the triangle inequality to the remaining term:
\begin{equation}\label{eqn:S-term}
\sum_{w\in S} \left| \log |\bbeta|_w \right|
\leq \sum_{w\in S} \left| \log |\bbeta|_w + \sum_{v\in T}n_v \log |\bal_v|_w \right|
+ \sum_{w\in S} \left| \sum_{v\in T}n_v \log |\bal_v|_w \right|.
\end{equation}
Observe that by our choice of $\bal_v$ in the lemmas above, $|\bal_v|_w\geq 1$ for all $w\in S$, and thus,
\[
\sum_{w\in S} \left| \sum_{v\in T}n_v \log |\bal_v|_w \right| 
\leq \sum_{w\in S} \sum_{v\in T}|n_v| \log |\bal_v|_w
=\sum_{w\in S'} \sum_{v\in T}|n_v| (-\log |\bal_v|_w),
\]
where the last equality follows from the product formula. But likewise, $|\bal_v|_w=1$ for all $w\in S'\setminus \{v\}$ and $|\bal_v|_v<1$, so in fact,
\[
 \sum_{w\in S} \left| \sum_{v\in T}n_v \log |\bal_v|_w \right| 
 \leq \sum_{w\in S'} \left| \sum_{v\in T} n_v \log |\bal_v|_w\right|.
\]
On observing that $|\bbeta|_w=1$ for all $w\in S'$, we may write this same expression as:
\begin{equation}\label{eqn:S-S'-ineq}
 \sum_{w\in S} \left| \sum_{v\in T}n_v \log |\bal_v|_w \right| 
 \leq \sum_{w\in S'} \left| \log |\bbeta|_w + \sum_{v\in T} n_v \log |\bal_v|_w\right|.
\end{equation}
Combining equations \eqref{eqn:h-P-alpha}, \eqref{eqn:S-term}, and \eqref{eqn:S-S'-ineq}, we find that
\[
 h_1(\bbeta) \leq \sum_{w\in M_K} \left| \log |\bbeta|_w + \sum_{v\in T} n_v \log |\bal_v|_w\right|=h_1\bigg(\bbeta \prod_{v\in T} \bal_v^{n_v}\bigg),
\]
which is the desired result.

It now remains to prove the second claim, namely that
\[
 \delta(\bbeta)\leq \delta\bigg(\bbeta \prod_{v\in T} \bal_v^{n_v}\bigg)=\delta(\bal).
\]
Suppose for some $\sigma\in G$ that $\bbeta\neq\sigma\bbeta$ but $\sigma(\bal)=\bal$.  Then for some $w\in S$, $\|\bbeta\|_w\neq\|\bbeta\|_{\sigma w}$, and so we must have
\[
 \bigg\|\prod_{v\in T} \bal_v^{n_v}\bigg\|_w\neq \bigg\|\prod_{v\in T} \bal_v^{n_v}\bigg\|_{\sigma w}.
\]
It follows then by Lemma \ref{lemma:alpha-v-system} that for some $v\in T$ we must have $\sigma v\neq v$ and $n_v\neq n_{\sigma v}$, else the $w$-adic valuation would not differ. But then it is easy to see that
\[
 \bigg\|\prod_{u\in T} \bal_{u}^{n_u}\bigg\|_v=\|\bal_v\|_v^{n_v} =p^{-n_v/e}\neq p^{-n_{\sigma v}/e}= \|\bal_{\sigma v}\|_{\sigma v}^{n_{\sigma v}} = \bigg\|\prod_{u\in T} \bal_u^{n_u}\bigg\|_{\sigma v},
\]
where $e$ is the ramification index of $v|p$.  Thus any contribution the $\prod_{u\in T} \bal_u^{n_u}$ term might have towards decreasing the orbit of $\bal=\bbeta\prod_{v\in T} \bal_v^{n_v}$ by equating two $w$-adic valuations of $\bal$ for $w\in S$ will nevertheless result in distinct $v$-adic valuations for some $v\in T$ and thus the new orbit will be at least as large, proving the claim.
\end{proof}

We are now ready to prove Theorem \ref{thm:extremal-infimum}.

\begin{proof}[Proof of Theorem \ref{thm:extremal-infimum}]
Let $\bal\in V_K$, where $K$ is the Galois closure of the minimal field of $\bal$. Let $P_K:\cG\ra V_K$ be the projection to $V_K$, $S$ the set constructed above for $K$ so that in fact $\bal\in V_{K,S}$, and $P_S : V_K\ra V_{K,S}$ the projection defined in Proposition \ref{prop:s-unit-projection}, where $V_{K,S}$ is the $\bQ$-vector space span of the $S$-units in $K^\times$ modulo torsion. Notice that in fact, for some set $S'\subset M_{\bQ}$, we have
\[
 \bigcup_{v\in S} \{w\in M_{\Qbar}:w|v\in M_K\} = \bigcup_{p\in S'} \{w\in M_{\Qbar}:w|p\in M_\bQ\}
\]
by the requirement that $S$ be closed under the Galois action. $S$, as a set of places on $K$, meets the criteria set forth in the theorem statement. Let $P=P_SP_K:\cG\ra V_{K,S}$.  By Lemma \ref{lemma:delta-PK-leq-for-K-in-Kg} and Propositions \ref{prop:PK-continuous-wrt-Weil-norm} and \ref{prop:s-unit-projection},  we have that $\delta h_1(P\bbeta)\leq \delta h_1(\bbeta)$ for all $\bbeta\in\cG$. Since $P$ is linear and $\bal\in V_{K,S}$, note that $\bal = P \bal$, so if we have a factorization of $\bal$ into $\bal_i\in \cG$ for $i=1,\ldots,n$, then
\[\bal = \bal_1\cdots \bal_n \implies \bal = (P\bal_1) \cdots (P\bal_n),\]
and $P\bal_i \in V_{K,S}$ for all $i=1,\ldots,n$. Then by our established inequalities for $P_K$ and$P_S$ with respect to $\delta h_1$,
\[
\sum_{i=1}^n \delta h_1 (P \bal_i) \leq \sum_{i=1}^n \delta h_1(\bal_i).
\]
Hence we may take the infimum within $V_{K,S}$.  Associate to each term in the infimum its minimal subspace $V_{F,S}\subseteq V_{K,S}$ containing it for $F\subset K$.  If we have more than one term for any given minimal subspace $V_{F,S}$, notice that the $\delta$ values are equal and we can combine any such terms by the triangle inequality for $h_1$. Thus, the first part of the claim is proven. The remaining criterion easily follows from observing that the choice of $\bal_F$ can be made in accord with Theorem \ref{thm:minimizing-mod-wk}.
\end{proof}

\end{document}